\newcommand{\Spec}{\mathrm{Spec}}
\newcommand{\red}{\mathrm{red}}
\newcommand{\Zar}{\mathrm{Zar}}
\newcommand{\oxi}{{\overline{\xi}}}
\newtheorem{leer}{}[section]
\newtheorem{thm}[leer]{Theorem}
\newtheorem{conj}[leer]{Conjecture}
\newtheorem{rema}[leer]{Remark} 
\newtheorem{prop}[leer]{Proposition}
\newtheorem{lemm}[leer]{Lemma}
\newtheorem{coro}[leer]{Corollary}
\newtheorem{defi}[leer]{Definition}
\newtheorem{fact}[leer]{Fact}
\newtheorem{setup}[leer]{Setup}
\newtheorem{exam}[leer]{Example}
\newcommand{\ol}[1]{\overline{#1}}
\newcommand{\plim}{\mathop{\varprojlim}\limits}
\newcommand{\XX}{{\mathcal X}}
\newcommand{\YY}{{\mathcal Y}}
\newcommand{\ZZ}{{\mathcal Z}}
\newcommand{\AAA}{\mathscr{A}}
\newcommand{\OOO}{\mathscr{O}}
\newcommand{\Cc}{{\mathbb{C}}}
\newcommand{\Nn}{{\mathbb{N}}}
\newcommand{\Pp}{{\mathbb{P}}}
\newcommand{\Qq}{{\mathbb{Q}}}
\newcommand{\Rr}{{\mathbb{R}}}
\newcommand{\Zz}{{\mathbb{Z}}}
\newcommand{\oa}{{\ol{a}}}
\newcommand{\ob}{{\ol{b}}}
\newcommand{\of}{{\ol{f}}}
\newcommand{\os}{{\ol{s}}}
\newcommand{\oy}{{\ol{y}}}
\newcommand{\oz}{{\ol{z}}}
\newcommand{\oE}{{\ol{E}}}
\newcommand{\oF}{{\ol{F}}}
\newcommand{\oK}{{\ol{K}}}
\newcommand{\oY}{{\ol{Y}}}
\newcommand{\Mor}{\mathrm{Mor}}
\newcommand{\Gal}{\mathrm{Gal}}
\newcommand{\chara}{\mathrm{char}}
\begin{document}

\title{Fibration theorems for varieties with the weak Hilbert property
\footnote{Keywords: Hilbert irreducibility theorem, (weak) Hilbert property, fibration theorem, rational points, ramified covers\\
2020 Mathematics Subject Classification: 11G35, 12E25, 12E30 (primary), 14G05, 14K15, }}
\author{Sebastian Petersen}
\maketitle

\begin{abstract}
The weak Hilbert property (WHP) for varieties over fields of characteristic zero was introduced by Corvaja and Zannier in 2017. There exist integral variants of WHP for arithmetic schemes. 
We present new fibration theorems for both the WHP and its integral analogue. Our primary fibration result, in a sense dual to the mixed fibration theorems of Javanpeykar and Luger, establishes for a smooth proper morphism $f: Y \to Z$ of smooth connected varieties, that if $Z$ has the strong Hilbert property (HP) and the generic fiber has WHP, then the total space $Y$ also has WHP. As an application, we use this result in combination with previous work by Corvaja, Demeio, Javanpeykar, Lombardo, and Zannier and in combination with recent work of Javanpeykar to show that certain non-constant abelian schemes over HP varieties possess WHP. For integral WHP, we prove a new fibration theorem for proper smooth morphisms with a section, which generalizes earlier product theorems of Javanpeykar and Wittenberg, and of Luger. A key lemma gives information about the structure of covers of $Y$ whose branch locus is not dominant over $Z$.
\end{abstract}

\tableofcontents

\section{Introduction}
\subsection{Previous work}

The concept of a variety with the Hilbert property HP was introduced by Colliot-Th\'el\`ene--Sansuc 
and Serre (cf. \cite{CS}, \cite[3.1.1, 3.1.2]{SerTopics}). It turns out, however, that over a number field the smooth proper connected varieties with HP form a quite restricted class of varieties even among those having a Zariski dense set of rational points: Corvaja and Zannier proved that for every smooth proper connected variety $Y$ over a number field $K$, if $Y$ has HP, then the geometric \'etale fundamental group 
$\pi_1(Y_\oK)$ is trivial (cf. \cite[Thm. 1.6]{CZ}).  For example a non-trivial abelian variety over a number field never has HP. This led Corvaja and Zannier to the concept of a variety with the weak Hilbert property WHP (cf. \cite[§2]{CZ}, \cite[Definition 1.2]{CDJLZ}). 

\begin{defi} Let $Y$ be a normal connected variety over a field $K$ of characteristic zero. A subset $M$ of $Y(K)$ is said to be {\bf thin} (resp. {\bf strongly thin}) in $Y$ if there exists a proper closed subset $C$ of $Y$, a finite set $J$ and for each $j\in J$ a normal connected variety $X_j/K$ and a finite surjective $K$-morphism $g_j: X_j\to Y$ such that 
$\deg(g_j)\ge 2$ for all $j\in J$ (resp. such that $g_j$ is ramified for all $j\in J$) and such that $M\subset C(K)\cup \bigcup_{j\in J} X_j(K)$. 
A $K$-variety $Y$ has {\bf HP (resp. WHP) over $K$} if $Y$ is normal and connected and $Y(K)$ is not thin (resp. not strongly thin) in $Y$. 
We say that $Y$ has {\bf HP (resp. WHP) over $K$ potentially} if there exists a finite extension $K'/K$ such that $Y_{K'}$ has HP (resp. WHP) over $K'$. 
\end{defi}

This definition of HP / WHP is equivalent to the classical one
(cf. \cite[p. 189]{CS}, \cite[3.1.1, 3.1.2]{SerTopics}, \cite{BFP3}) except that some authors do not build a normality assumption into HP. 
The following conjecture is implied by a central conjecture of Corvaja and Zannier 
(cf. \cite[Section 2, p. 9, Question-Conjecture 2]{CZ}, \cite[Conjecture 1.9]{CDJLZ}). 

\begin{conj} \label{conj:CZ} Let $K$ be a finitely generated field of characteristic zero. 
Let $Y$ be a smooth connected $K$-variety.
If $Y(K)$ is Zariski dense in $Y$, then $Y$ has WHP potentially over $K$. 
\end{conj}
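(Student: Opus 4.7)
Conjecture~\ref{conj:CZ} is the central open problem animating the Corvaja--Zannier program, so no complete proof is within reach; what follows is therefore a reduction strategy that isolates the missing conditional input. After replacing $K$ by a finite extension and $Y$ by a smooth projective model (allowed because WHP is a potential property and, for smooth proper varieties, a birational invariant up to controlled modifications), I normalise to the case in which $Y$ is smooth, projective, and geometrically connected over $K$, with $Y(K)$ Zariski dense.

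The plan is to study the Albanese morphism $a: Y \to A := \mathrm{Alb}_{Y/K}$ and its image $a(Y) \subseteq A$. Since $a(Y(K))$ is Zariski dense in $a(Y)$, Faltings's theorem on subvarieties of abelian varieties forces, after enlarging $K$ and translating, $a(Y)$ to be an abelian subvariety $B \subseteq A$; write $b: Y \to B$ for the induced surjection. For the base, the conjecture is known: work of Corvaja--Zannier and of Corvaja--Demeio--Javanpeykar--Lombardo--Zannier establishes that an abelian variety with a Zariski dense set of rational points has WHP after a finite extension of its field of definition. For the generic fibre $Y_\eta / K(B)$, I would spread $Y(K)$ along $b$, use a Hilbert-irreducibility style specialisation to locate many fibres over which the $K$-points remain Zariski dense, and then induct on $\dim Y$ to conclude that $Y_\eta$ acquires WHP potentially over $K(B)$.

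The decisive step is to combine WHP on $B$ with WHP on $Y_\eta$ to obtain WHP on $Y$ via a fibration theorem. Here lies the main obstacle: the paper's primary fibration theorem demands the \emph{strong} Hilbert property on the base, which an abelian variety almost never satisfies. One would need either a genuinely new fibration statement of the form ``WHP on the base plus WHP on the generic fibre implies WHP on the total space'' (unlikely in full generality without further hypotheses on the variation of the fibration), or an argument tailored to abelian bases that exploits the abundance of torsion sections of $B \to \Spec K$ as a substitute for HP, in the spirit of the abelian-scheme applications stated in the abstract. A second, equally serious obstacle is that the Albanese reduction above tacitly invokes Mordell--Lang-type input, and more fundamentally the \emph{specialness} of $Y$ in Campana's sense, which is predicted by but currently tied to Lang's conjecture. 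Thus any unconditional attack along these lines would need to supply both a WHP-over-WHP fibration theorem suitable for abelian bases and strong Mordell--Lang/Lang input, each of which is presently out of reach.
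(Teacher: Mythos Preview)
The statement is labelled a \emph{conjecture} in the paper and is not proved there; it is attributed to Corvaja--Zannier and serves only to motivate the fibration theorems that follow. You recognise this at the outset, so there is no discrepancy to report between your attempt and the paper's treatment: the paper simply offers no proof to compare against.

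Regarding the reduction strategy you sketch, the main obstacle you isolate is exactly the paper's Conjecture~\ref{conj:fib}: a fibration statement of the shape ``WHP on the base plus WHP on the fibres implies WHP on the total space'' is precisely what is missing, and the paper's results (Theorem~\ref{thm:1}, Theorem~\ref{thm:2}) supply this only when the base has the \emph{strong} Hilbert property, which an abelian variety of positive dimension never does. Two further gaps in your outline are worth naming. First, the Albanese reduction is vacuous when $h^{1,0}(Y)=0$ (for instance for rationally connected $Y$), so your induction on $\dim Y$ has no leverage in that case and an entirely different argument would be required there. Second, the step ``locate many closed fibres with Zariski-dense $K$-points, then induct to conclude that $Y_\eta$ acquires WHP over $K(B)$'' conflates parts (a) and (b) of Conjecture~\ref{conj:fib}: WHP for many closed fibres over $K$ does not by itself yield WHP for the generic fibre over $K(B)$, so the induction as written does not close even granting the fibration input.
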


A recent breakthrough was the paper \cite{CDJLZ} where Corvaja, Demeio, Javanpeykar, Lombardo and Zannier proved Conjecture
\ref{conj:CZ} for  abelian varieties and the paper \cite{Lug3} where Luger, building on \cite{CDJLZ}, proved this conjecture for connected algebraic groups. The recent survey \cite{FeJav} of Fehm and Javanpeykar gives a good picture about other cases where this conjecture is known. Let us consider the following conjecture. 

\begin{conj} \label{conj:fib}
Let $K$ be a finitely generated field of characteristic zero. Let $Y$ and $Z$ be smooth connected $K$-varieties and 
$f: Y\to Z$ a 
surjective morphism. Assume that $Z$ has WHP over $K$. 
\begin{enumerate}
\item[(a)] Let $\Xi$ be the set of all $z\in Z(K)$ such that the fibre 
$Y_z=Y\times_{Z,f} \Spec(k(z))$ has 
WHP over $K$. If $\Xi$ is not strongly thin in $Z$, then $Y$ has WHP over $K$. 
\item[(b)] If the generic fibre of $f$ has WHP over the function field $R(Z)$ of $Z$, then $Y$ has WHP over $K$. 
\end{enumerate}
\end{conj}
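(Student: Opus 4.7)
I would focus on part (b); part (a) should then follow by a specialization argument that transfers WHP from a non-strongly-thin family of special fibres to the generic fibre over $R(Z)$, reducing (a) to (b). Fix a finite family $(g_j:X_j\to Y)_{j\in J}$ of finite surjective ramified $K$-morphisms with each $X_j$ normal connected, and a proper closed subset $C\subsetneq Y$. The goal is to produce $y\in Y(K)\setminus C(K)$ with $y\notin g_j(X_j(K))$ for all $j\in J$.

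I partition $J=J_{\hor}\sqcup J_{\ver}$ according to whether $f(\Branch(g_j))=Z$ or is a proper closed subset of $Z$. For $j\in J_{\hor}$, the generic fibre $g_{j,\eta}:X_{j,\eta}\to Y_\eta$ is an integral ramified cover of $Y_\eta$ over $R(Z)$; by WHP of $Y_\eta$ I find an $R(Z)$-point of $Y_\eta$ outside $C_\eta$ and outside the image of $g_{j,\eta}$ for every $j\in J_{\hor}$. Spreading it out yields a rational section $\sigma:U\dashrightarrow Y$ of $f$ on a non-empty open $U\subset Z$ such that $\sigma(z)\notin C(K)\cup\bigcup_{j\in J_{\hor}}g_j(X_j(K))$ for $z\in U(K)$ outside a proper closed subset of $U$.

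For $j\in J_{\ver}$ the branch locus of $g_j$ sits over a proper closed $W_j\subsetneq Z$, and over $Z^\circ:=Z\setminus\bigcup_{j\in J_{\ver}} W_j$ each $g_j$ restricts to an étale cover of $Y^\circ:=f^{-1}(Z^\circ)$. The crux is to descend these covers to ramified covers of $Z$: using (a suitable form of) the homotopy sequence $1\to\pi_1(Y_{\ol\eta})\to\pi_1(Y^\circ)\to\pi_1(Z^\circ)\to 1$, one argues that after a finite étale pullback each $g_j$ arises from a finite cover $Z_j'\to Z^\circ$, and because $g_j$ itself is ramified somewhere above $W_j$ the normalization of $Z_j'$ over $Z$ is a ramified cover. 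The WHP of $Z$ then supplies a non-strongly-thin subset of $Z^\circ(K)$ whose elements miss the images of these ramified covers of $Z$, which in turn control $\bigcup_{j\in J_{\ver}} g_j(X_j(K))\cap Y_z(K)$. Choosing $z\in U(K)\cap Z^\circ(K)$ in this subset and outside the thin set from the horizontal step, I take $y:=\sigma(z)$.

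The main obstacle is the descent step for $J_{\ver}$. In the conjecture $f$ is only assumed surjective, whereas the clean homotopy exact sequence one wishes to use holds for $f$ smooth proper with geometrically connected fibres --- exactly the setup under which the paper's primary fibration theorem is stated, and even there with HP rather than WHP imposed on $Z$. Relaxing these hypotheses in full generality seems to require a subtler structural statement about finite ramified covers of $Y$ whose branch locus fails to dominate $Z$; this is presumably the role of the ``key lemma'' advertised in the abstract, but passing from HP on $Z$ to WHP on $Z$, and from smooth proper $f$ to arbitrary surjective $f$, looks to demand a genuinely new ingredient beyond the current techniques.
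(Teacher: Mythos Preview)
The statement you are attempting to prove is \emph{Conjecture \ref{conj:fib}} in the paper: it is explicitly declared open, and the paper does not offer a proof. What the paper proves are partial results under substantially stronger hypotheses (Theorem \ref{thm:1}: $f$ smooth proper and $Z$ has HP rather than WHP; Theorem \ref{thm:2}: $f$ smooth proper with a section, in the integral setting). So there is no ``paper's own proof'' to compare against, and your closing paragraph is exactly right: the full conjecture is beyond the techniques developed here.

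Your outline for (b) is essentially the strategy the paper uses to prove Theorem \ref{thm:1}. The horizontal/vertical partition, the choice of a section via WHP of the generic fibre, and the descent of vertical covers to ramified covers of $Z$ are all there (the descent is Proposition \ref{prop:split-neg}, your ``key lemma''). You have correctly isolated the first obstruction: the homotopy input (condition (EP)) is only available for $f$ smooth proper, or a geometric fibration with a section, not for an arbitrary surjection. There is a second, equally serious obstruction that you allude to but do not pinpoint: in the paper's argument, pulling back a \emph{horizontal} cover along the chosen section yields a subset of $D(K)$ that is only shown to be \emph{thin}, not strongly thin (Proposition \ref{prop:sec:good:pts}(b)). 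This is precisely why HP, rather than WHP, is imposed on $Z$ in Theorem \ref{thm:1}; with only WHP on $Z$ one cannot in general avoid a merely thin set.

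Your proposed reduction of (a) to (b) via specialization is not how the paper treats (a)-type statements, and looks unjustified: transferring WHP from a non-strongly-thin family of special fibres up to the generic fibre over $R(Z)$ is not a known result and would itself be a substantial theorem. The paper's approach to its (a)-type result (Theorem \ref{thm:iwhp}) works directly with the fibres: horizontal covers restrict to ramified covers of $Y_z$ for $z$ in a dense open (Proposition \ref{prop:horizontal}), hence their images are strongly thin in $Y_z$ and can be avoided using WHP of $Y_z$; vertical covers are handled by the descent lemma together with a Chevalley--Weil-type argument over the Hermite--Minkowski base (Lemma \ref{lemm:weil}). Even there, condition (EP) is required, so again $f$ must be smooth proper (or a geometric fibration) with a section.
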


Conjecture \ref{conj:fib} is quite natural: In the situation of Conjecture \ref{conj:fib} it is easy to see that $Y(K)$ is Zariski dense in $Y$ and thus Conjecture \ref{conj:CZ} predicts that $Y$ ``should have'' WHP over $K$ at least potentially. Conjecture \ref{conj:fib} is open, but it is known in some cases. We usually refer to results towards Conjecture \ref{conj:fib} as ``fibration theorems''.
An early theorem of that kind was established by Bary-Soroker, Fehm and the author in 2015 (cf. \cite{BFP1}). 
If $Z$ has HP and the set $\{z\in Z(K): \mbox{$Y_z$ has HP}\}$ is not thin in $Z$, then $Y$ is known to have HP by 
\cite[Thm. 1.1.]{BFP1}. 
The following variant of the above fibration theorem was established recently by the same authors: If $Z$ has HP over $K$ and the generic fibre $Y_{R(Z)}$ of $f$ has HP over $R(Z)$, then $Y$ has HP over $K$ by \cite[Prop. 3.4]{BFP3}. 
The following product theorem can be obtained from \cite[Thm. 1.1.]{BFP1} or from \cite[Prop. 3.4]{BFP3}: If $Z_1$ and $Z_2$ are $K$-varieties that both have HP, then the $K$-variety $Z_1\times_K Z_2$ has HP (cf. \cite[Cor. 3.4]{BFP1}, \cite[Cor. 3.5]{BFP3}). 
There is the so-called ``mixed fibration theorem'' of Luger \cite{Lug4}
which itself generalizes a fibration theorem of Javanpeykar \cite[Thm. 1.3]{Jav}: If $Z$ has WHP and 
$\{z\in Z(K): \mbox{$Y_z$ has HP}\}$ is not strongly thin in $Z$, then the mixed fibration theorem implies that $Y$ has WHP. 
Furthermore there is the following product theorem of Javanpeykar and Wittenberg that appeared within the seminal paper \cite{CDJLZ}: 
If $Z_1$ and $Z_2$ are smooth proper $K$-varieties that both have WHP, then the $K$-variety $Z_1\times_K Z_2$ has WHP. 
by \cite[Thm 1.9]{CDJLZ}. 

There are various variants of HP and WHP in the literature addressing integral points on arithmetic schemes rather than rational points on $K$-varieties. The following definition is inspired by Luger's work  \cite{Lug1},  \cite{Lug2},  \cite{Lug3}. In our notation a variety over a scheme $S$ is a $S$-scheme that is separated and of finite type over $S$ (cf. page \pageref{not}). 

\begin{defi} Let $S$ be a noetherian connected regular scheme whose function field $K=R(S)$ is of characteristic zero. Let $\XX$ be a variety over $S$ and $X=\XX_K$. 
We denote by $\XX(S)^{(1)}\subset X(K)$ the set of all near $S$-integral points of $X$ (cf. Definition \ref{defi:near}). 
We say that $\XX$ {\bf has HP (resp. WHP) integrally over $S$} if the $K$-variety $X$ is normal and connected and $\XX(S)^{(1)}$ is not thin (resp. not strongly thin) in $X$. 
We say that $\XX$ has {\bf WHP over $S$ potentially} if there exists a 
noetherian connected regular scheme $T$ and a quasi-finite dominant morphism $T\to S$ 
such that $\XX\times_S T$ has WHP integrally over $T$. 
\end{defi}

The following conjecture is motivated by a conjecture of Corvaja and Zannier (cf.  \cite[p.11]{CZ}). It 
agrees essentially with \cite[Conjecture 1.5]{Lug3}. See also \cite[Conjecture 1.12]{BJL}. 
\begin{conj} Let $S$ be a regular connected dominant $\Zz$-variety. Let $\YY$ be a smooth $S$-variety and assume that $Y:=\YY_K$ is connected. If $\YY(S)^{(1)}$ is Zariski dense in $Y$, then $\YY$ has WHP over $S$ potentially. 
\end{conj}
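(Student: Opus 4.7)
The statement is an open conjecture, so I can only describe a plausible strategy rather than a full argument. The plan is to reduce the integral WHP assertion to the rational WHP assertion predicted by Conjecture \ref{conj:CZ}. Since $\YY(S)^{(1)}\subset Y(K)$, Zariski-density of $\YY(S)^{(1)}$ in $Y$ implies Zariski-density of $Y(K)$ in $Y$, so Conjecture \ref{conj:CZ} (either assumed in general, or invoked in whichever special case one is working in) would furnish a finite extension $K'/K$ over which $Y_{K'}$ has WHP. Taking $T$ to be a non-empty regular open subscheme of the normalization of $S$ in $K'$ and replacing $S,\YY$ by $T,\YY\times_S T$, one is reduced to the situation in which the generic fibre $Y=\YY_K$ already has WHP over $K$.

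Now, given a finite family of ramified finite surjective morphisms $g_j:\XX_j\to\YY$ with normal connected source and a proper closed $C\subsetneq\YY$, one passes to generic fibres to obtain ramified covers $g_{j,K}:X_{j,K}\to Y$ and a proper closed $C_K\subsetneq Y$. By WHP of $Y/K$, the set $Y(K)\setminus\bigl(C_K(K)\cup\bigcup_j X_{j,K}(K)\bigr)$ is Zariski dense in $Y$. The core technical step is to show that a Zariski dense subset of these rational points actually lies in $\YY(S)^{(1)}$ and avoids the images of the $\XX_j(S)$; equivalently, to promote rational non-liftability to integral non-liftability. Smoothness of $\YY\to S$ together with the valuative criterion allows spreading out any individual $K$-rational point of $Y$ to a near-$S$-integral point of $\YY$ after shrinking $S$, and non-liftability at the generic fibre propagates to non-liftability over a suitable neighbourhood of the generic point of $S$.

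The main obstacle is that the shrinking of $S$ needed to integralize a given rational point may depend on that point, so there is no a priori dense open $U\subset S$ over which every element of $Y(K)\setminus\bigl(C_K(K)\cup\bigcup_j X_{j,K}(K)\bigr)$ spreads out to a $U$-integral point of $\YY$ avoiding fixed integral models of the covers. Overcoming this would require either a uniform/quantitative refinement of Conjecture \ref{conj:CZ}, or a genuinely new arithmetic ingredient in the spirit of a strong-approximation statement adapted to ramified covers. The integral fibration theorems announced in the abstract (for proper smooth morphisms with a section) presumably provide the correct template for how such a uniform spreading-out should be organized, but extending them to cover the full conjecture, without a section or properness, is exactly where the difficulty lies and is why the statement remains open.
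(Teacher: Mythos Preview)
You are right that the statement is a conjecture which the paper does not prove; it is stated as motivation (essentially the integral analogue of Conjecture~\ref{conj:CZ}) and left open. So there is no proof in the paper to compare against, and your decision to sketch a strategy rather than a proof is appropriate.

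That said, two points in your outline deserve correction. First, the definition of integral WHP in the paper asks that the subset $\YY(S)^{(1)}\subset Y(K)$ be not strongly thin \emph{in the $K$-variety $Y$}; the covers in the definition of ``strongly thin'' are covers $g_j:X_j\to Y$ of $K$-varieties, not covers $\XX_j\to\YY$ of $S$-varieties. So there is no ``passing to generic fibres'' step, and no question of integral liftability to $\XX_j$: the only issue is whether enough points of $\YY(S)^{(1)}$ avoid $\bigcup_j g_j(X_j(K))$.

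Second, and more substantively, your reduction to Conjecture~\ref{conj:CZ} does not actually reduce the problem. Granting that $Y_{K'}$ has WHP over $K'$ tells you that $Y_{K'}(K')$ is not strongly thin, but it says nothing about the possibly much smaller subset $\YY_T(T)^{(1)}\subset Y_{K'}(K')$. Your appeal to ``smoothness together with the valuative criterion'' does not help here: smoothness of $\YY\to S$ gives infinitesimal lifting, not extension of $K$-points across codimension-one points of $S$; a $K$-point of $Y$ extends over \emph{some} open of $S$ simply by spreading out, but, as you correctly note, not uniformly. The one situation in which your reduction goes through is when $\YY\to S$ is proper, since then $\YY(S)^{(1)}=Y(K)$ by Proposition~\ref{prop:voj:proper} and integral WHP collapses to rational WHP. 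Outside the proper case the gap you identify is genuine, and the conjecture is not known to follow from Conjecture~\ref{conj:CZ}.
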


Luger has product theorems in the context of arithmetic schemes; we refer to 
\cite{Lug1} and \cite{Lug3} for this interesting recent development. 
Let $S$ be a regular connected dominant $\Zz$-variety. Let $\XX, \YY$ be $S$-varieties. If $\XX$ and $\YY$ both have HP (resp. WHP) integrally over $S$, then
$\XX\times_S \YY$ has HP (resp. WHP) integrally over $S$ by a recent theorem of Luger \cite[Thm. 1.5]{Lug1} (resp. \cite[Thm. 1.4]{Lug3}). 
\cite[Thm. 1.4]{Lug3} generalizes \cite[Thm. 1.9]{CDJLZ}. 

The above fibration theorems are usually at game, when results towards Conjecture \ref{conj:CZ} are established.

\subsection{Main results}

The following theorem is the first main result of our paper. 
It is in a sense dual to the above mentioned mixed fibration theorems of Javanpeykar \cite[Thm. 1.3]{Jav} and Luger \cite{Lug4}. 

\begin{thm}\label{thm:1} (cf. Corollary \ref{coro:main:smoothproper}) 
Let $K$ be a field of characteristic zero. Let $Y$ and $Z$ be connected smooth $K$-varieties. 
Let $f: Y\to Z$ be a smooth proper morphism. 
Assume that $Z$ has HP over $K$ and that the generic fibre $Y_{R(Z)}$ has WHP over the function field $R(Z)$ of $Z$. Then 
$Y$ has WHP over $K$. 
\end{thm}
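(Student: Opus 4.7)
The plan is to argue by contrapositive. Assume $Y(K)$ is strongly thin in $Y$: there exist a proper closed subset $C\subsetneq Y$ and finitely many finite surjective ramified morphisms $g_j\colon X_j\to Y$ ($j\in J$ finite) from normal connected $K$-varieties, with $Y(K)\subseteq C(K)\cup\bigcup_{j\in J} g_j(X_j(K))$. I aim to produce a $K$-point of $Y$ outside this union. Partition $J=J_1\sqcup J_2$ according to whether the branch locus $B_j := g_j(\Ram(g_j))\subset Y$ dominates $Z$ (so $j\in J_1$ and the generic fibre cover $g_{j,\eta}\colon X_{j,\eta}\to Y_\eta$ is still ramified) or satisfies $f(B_j)\subsetneq Z$ (so $j\in J_2$ and $g_{j,\eta}$ is étale, since $B_j\cap Y_\eta=\emptyset$). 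The strategy has two stages: first, find a section $s\colon U\to Y$ of $f$ over a dense open $U\subseteq Z$, equivalently an $R(Z)$-point of $Y_\eta$, that does not lift through any $g_j$; second, specialize $s$ by means of HP of $Z$.

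For the first stage I combine WHP of $Y_\eta$ with the key lemma. For $j\in J_1$ the cover $g_{j,\eta}$ is ramified, so $g_{j,\eta}(X_{j,\eta}(R(Z)))$ contributes to a strongly thin subset of $Y_\eta(R(Z))$. For $j\in J_2$ the cover $g_{j,\eta}$ is only étale and WHP of $Y_\eta$ does not directly forbid lifts; here the key lemma on covers with non-dominant branch locus enters. I expect the lemma to yield, via the Stein factorization $X_j\to Z_j'\to Z$ of $f\circ g_j$, a connected finite cover $Z_j'\to Z$ of degree $\geq 2$ through which $X_j\to Z$ factors (equivalently, $X_{j,\eta}$ is not geometrically connected over $R(Z)$). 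Granting this, any $R(Z)$-point of $X_{j,\eta}$ would induce a section of the irreducible degree-$\geq 2$ cover $Z_j'\to Z$ over the generic point of $Z$, which is impossible; hence $X_{j,\eta}(R(Z))=\emptyset$ for $j\in J_2$. Applying WHP of $Y_\eta$ to the strongly thin set assembled from $C\cap Y_\eta$ and the $g_{j,\eta}(X_{j,\eta}(R(Z)))$ for $j\in J_1$ now yields an $s\in Y_\eta(R(Z))$ avoiding all of these, and thus not lifting through $g_j$ for any $j\in J$.

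For the second stage I pull back each $g_j$ by the section to obtain $s^*X_j := X_j\times_Y U\to U$. Non-liftability of $s$ through $g_j$ is equivalent to the statement that no irreducible component of $s^*X_j$ maps birationally onto $U$; every component therefore has either image strictly smaller than $U$ or generic degree $\geq 2$ over $U$. Consequently the image of $s^*X_j(K)$ in $U(K)$ is thin in $Z$, as is $s^{-1}(C)\cap U$. HP of $Z$ then furnishes $z\in U(K)$ avoiding all these thin subsets, and $s(z)\in Y(K)$ is the desired point outside $C(K)\cup\bigcup_j g_j(X_j(K))$, yielding the required contradiction.

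The principal obstacle is the $J_2$ case: covers that are ramified globally but étale on the generic fibre, to which WHP of $Y_\eta$ alone does not apply. The key lemma about covers with non-dominant branch locus is the decisive new input; it translates the vertical ramification of $g_j$ into a non-trivial connected cover of the base $Z$, which then cooperates with HP of $Z$ to close the argument. The asymmetry in the hypotheses of the theorem---HP on the base, only WHP on the generic fibre---matches exactly this $J_1/J_2$ dichotomy: the generic-fibre hypothesis kills covers that are generically ramified, while the stronger base hypothesis is needed to kill the étale covers of $U$ arising from pulling back $J_2$-type morphisms by the section.
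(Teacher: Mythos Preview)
Your overall two–stage strategy (choose a section of $f$ avoiding the covers at the level of $R(Z)$, then specialize via HP of $Z$) matches the paper's, and your treatment of $J_1$ is correct. The gap is in your handling of $J_2$. Your expectation that the key lemma forces the Stein factorization $X_j\to Z_j'\to Z$ of $f\circ g_j$ to have $\deg(Z_j'/Z)\ge 2$ is \emph{false}: a vertical ramified cover can have $X_j$ geometrically connected over $Z$. For a concrete example take $K=\mathbb{C}$, $Z=\mathbb{A}^1_t$, $Y=E\times Z$ with $E$ an elliptic curve admitting a degree-$2$ isogeny $\phi\colon E'\to E$ with nontrivial deck transformation $\iota$, and set $X=(E'\times\mathbb{A}^1_s)/\langle(\iota,\,s\mapsto -s)\rangle\to Y$ via $(\phi,\,s\mapsto s^2)$. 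Then $g\colon X\to Y$ is a vertical double cover, ramified along $E\times\{0\}$, yet $R(Z)=\mathbb{C}(t)$ is algebraically closed in $R(X)$, so the Stein factorization is trivial. Thus your deduction $X_{j,\eta}(R(Z))=\emptyset$ from the Stein factorization collapses.

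What the key lemma actually provides (Proposition~\ref{prop:split-neg}) is weaker: a finite \emph{\'etale} cover $X''\to X_j$ together with a \emph{ramified} cover $Z''\to Z$ and a $Z$-morphism $X''\to Z''$; the passage to the auxiliary \'etale cover $X''$ is essential. The paper then does \emph{not} argue, as you do, that the chosen section automatically fails to lift through $g_j$ for $j\in J_2$. Instead it proves directly (Lemma~\ref{lemm:negligible-sec}, Proposition~\ref{prop:sec:good:pts}(a)) that for \emph{every} section $s\colon D\to Y$ the set $T_j=\{z\in D(K):s(z)\in g_j(X_j(K))\}$ is strongly thin: one pulls back $g_j$ along $s$ to get $F\to D$, pulls back the \'etale cover $X''\to X_j$ to get $F'\to F$, and observes that every dominant component of $F'$ factors through $Z''_D\to D$, which is ramified by purity (since $D$ contains all codimension-$1$ points); hence every dominant component of $F^\nu$ is ramified over $D$ (it has an \'etale cover that is), not merely of degree $\ge 2$. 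A posteriori this does imply that no component of $s^*X_j$ is birational to $D$, i.e.\ that $s$ does not lift generically through $g_j$ --- so your desired conclusion for $J_2$ is in fact true --- but establishing it requires precisely the mechanism you omitted.
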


 The properness assumption in Theorem \ref{thm:1} can be weakened to some extent. We refer to Corollary \ref{coro:pi2} of the main text for such a generalization. The following remark explains the merit and limitation of Theorem \ref{thm:1}. 
 
 \begin{rema} \label{rema:crit} Let $K$ be a field of characteristic zero. Let $Y$ and $Z$ be connected smooth $K$-varieties. 
Let $f: Y\to Z$ be a dominant morphism. 
Assume that $Z$ has HP over $K$ and that the generic fibre $Y_{R(Z)}$ is a smooth and proper $R(Z)$-variety that satisfies WHP over $R(Z)$.
By the spreading-out principles of EGA4 there exists a non-empty open subscheme $U$ of $Z$ such that $f_U: Y_U\to Z_U$ 
is smooth and proper. Thus the ``stripe'' $Y_U$ has WHP over $K$ by Theorem \ref{thm:1}. 
Nevertheless the following Conjecture \ref{conj:ell2}, which is implied by Conjecture \ref{conj:fib} in case $K/\Qq$ is finitely generated, remains open at present unless $f$ is smooth and proper. Section \ref{sec:open} explains - in a relevant special case - where the problem lies. 
 \end{rema}
 
 \begin{conj} \label{conj:ell2} 
 In the situation of Remark \ref{rema:crit} $Y$ has WHP over $K$. 
\end{conj}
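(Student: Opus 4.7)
The plan is to reduce to Theorem~\ref{thm:1} by spreading out, and then to contend with the covers whose branch locus lies over the complement of the good open subset. By the spreading-out principles of EGA IV applied to the smooth proper generic fibre $Y_{R(Z)}$, there is a non-empty open $U \subseteq Z$ such that $f_U : Y_U \to U$ is smooth and proper, where $Y_U := f^{-1}(U)$; its generic fibre coincides with $Y_{R(Z)}$ and hence has WHP over $R(U) = R(Z)$. Since every open dense subvariety of a variety with HP again has HP (extend covers by normalization and apply HP of the ambient variety), $U$ itself has HP over $K$, and Theorem~\ref{thm:1} applies to $f_U$ to yield that $Y_U$ has WHP over $K$.

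To upgrade WHP from $Y_U$ to $Y$, suppose for contradiction that $Y(K)$ is strongly thin in $Y$, as witnessed by a proper closed $C \subset Y$ and ramified finite surjective morphisms $g_j : X_j \to Y$ from normal connected $K$-varieties $X_j$, indexed by $j \in J$. After possibly shrinking $U$ I can arrange a dichotomy: let $J_1 \subseteq J$ collect the covers whose branch divisor dominates $Z$, and $J_2 \subseteq J$ those whose branch divisor is contained in $f^{-1}(Z \setminus U)$. For $j \in J_1$ the restriction $g_{j,U} : X_{j,U} \to Y_U$ is still finite surjective, ramified, and has normal connected source, so the family $\{g_{j,U}\}_{j \in J_1}$ together with the proper closed subset $C \cap Y_U$ cannot witness strong thinness of $Y_U(K)$. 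Hence WHP of $Y_U$ furnishes a Zariski dense set of points $y \in Y_U(K)$ avoiding $C$ and $\bigcup_{j \in J_1} X_{j,U}(K)$.

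The genuine obstacle is the $J_2$-family: for $j \in J_2$ the restriction $g_{j,U}$ is an \emph{\'etale} cover of $Y_U$, and strong thinness is insensitive to \'etale covers, so WHP of $Y_U$ gives no control over $\bigcup_{j \in J_2} X_j(K)$. To treat these one would want a structural lemma for finite normal covers of $Y$ whose branch locus is not dominant over $Z$, analogous to the ``key lemma'' referenced in the abstract and used in the proof of Theorem~\ref{thm:1}. In the smooth proper case such control is available because proper base change and the homotopy exact sequence for the \'etale fundamental group of $f$ let one describe \'etale covers of $Y_U$ in terms of covers of $U$ and of a geometric fibre, which in turn can be handled using HP of $Z$ together with WHP of fibres. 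Once $f$ is merely dominant, this link breaks: an \'etale cover of $Y_U$ coming from $\pi_1^{\mathrm{et}}$ of a smooth fibre need not extend to a predictable cover of $Y$, a section of $g_j$ over $U$ may acquire indeterminacies along $Y \setminus Y_U$, and specialization of $K$-points of the generic fibre can escape to the non-smooth locus of $f$. This is the hard part, and the reason the conjecture is open: one would presumably need either to produce a compactification $\ol{f} : \ol{Y} \to Z$ with smooth proper fibres extending $f_U$ so that Theorem~\ref{thm:1} applies to $\ol{Y}$, or to develop a fibrewise descent for strongly thin sets sufficient to handle ramification concentrated over $Z \setminus U$.
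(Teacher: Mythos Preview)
This is a \emph{conjecture}, not a theorem: the paper does not prove it, and explicitly says in Remark~\ref{rema:crit} that it ``remains open at present unless $f$ is smooth and proper.'' So there is no proof in the paper to compare against, and your proposal is not a proof either---you yourself acknowledge this at the end. What you have written is an outline of the natural approach together with an explanation of why it breaks down, which is a reasonable thing to do when asked to ``prove'' an open conjecture.

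Your diagnosis of the obstruction is essentially correct: the covers $g_j$ with $j\in J_2$ become \'etale over $Y_U$, so WHP of $Y_U$ says nothing about them, and one would need structural information about such covers over all of $Y$ to proceed. The paper's own discussion of this obstruction (Section~\ref{sec:open}) is more concrete and slightly different in emphasis. Rather than speaking of compactifications or fibrewise descent, the paper pinpoints the failure of condition (EP) for $f$ as the technical reason its methods do not apply: Proposition~\ref{prop:htgr} shows, in the test case of a N\'eron model of an elliptic curve with bad semistable reduction, that the map $\pi_1(Y_{\overline{\xi}})\to\pi_1(Y)$ is \emph{not} injective, so the key Lemma~\ref{lemm:key} (which underlies Theorem~\ref{thm:1}) cannot be invoked. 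Your remark that ``an \'etale cover of $Y_U$ coming from $\pi_1^{\mathrm{et}}$ of a smooth fibre need not extend to a predictable cover of $Y$'' is morally the same point, but the paper's formulation via (EP) is sharper and directly tied to the machinery actually used.
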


  We combine Theorem \ref{thm:1} with the main result of \cite[Thm. 1.3]{CDJLZ} to prove Conjecture \ref{conj:CZ} in the case of 
certain abelian schemes over HP varieties.

\begin{coro}  (cf. Corollary \ref{coro:as})  \label{cor:2} Let $K$ be a finitely generated 
field of characteristic zero. Let $Z$ be a connected smooth $K$-variety and $A/Z$ an abelian scheme. 
Assume that $A(R(Z))$ is Zariski dense in $A_{R(Z)}$ and that $Z$ has HP over $K$. Then $A$ has WHP over $K$. 
\end{coro}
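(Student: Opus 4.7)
The plan is to deduce the result directly from Theorem~\ref{thm:1}, applied with $Y=A$ and $f:A\to Z$ the structure morphism of the abelian scheme; the only non-trivial hypothesis to check is the WHP of the generic fibre, which will be supplied by \cite[Thm.\ 1.3]{CDJLZ}.

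First I would verify the geometric setup. Since $Z$ is smooth and connected over $K$, it is irreducible. As $A\to Z$ is an abelian scheme, it is smooth and proper with geometrically connected fibres, so $A$ is smooth over $K$; and since $A\to Z$ is a flat surjection onto an irreducible base with connected fibres, $A$ is itself irreducible and in particular connected. Thus $A$ and $Z$ are connected smooth $K$-varieties and $f:A\to Z$ is the required smooth proper morphism.

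Next I would produce the WHP of the generic fibre. Set $L=R(Z)$. Since $K$ is finitely generated over $\Qq$ and $Z$ is a $K$-variety, $L$ is a finitely generated extension of $\Qq$. The generic fibre $A_L$ is an abelian variety over $L$, and the hypothesis that ``$A(R(Z))$ is Zariski dense in $A_{R(Z)}$'' is precisely the condition that $A_L(L)$ is Zariski dense in $A_L$. Hence \cite[Thm.\ 1.3]{CDJLZ} applies and yields that $A_L$ has WHP over $L$.

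Combined with the assumed HP of $Z$ over $K$, all hypotheses of Theorem~\ref{thm:1} are now in place, and the conclusion that $A$ has WHP over $K$ follows at once. I do not anticipate a serious obstacle: the corollary is really a clean assembly of the dual mixed fibration Theorem~\ref{thm:1} with the CDJLZ theorem on abelian varieties, and those two inputs supply all of the substantive content.
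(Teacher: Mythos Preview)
Your proposal is correct and follows essentially the same approach as the paper: apply \cite[Thm.~1.3]{CDJLZ} to obtain WHP of the generic fibre $A_{R(Z)}$ (using that $R(Z)$ is finitely generated over $\Qq$), then invoke Theorem~\ref{thm:1} (equivalently Corollary~\ref{coro:main:smoothproper}). The paper's proof is terser, but the substance is identical.
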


 If there exists an abelian variety $A_0/Z$ and an isomorphism $A_0\times_K Z\cong A$, then alternatively \cite[Thm. 1.3]{CDJLZ} and the mixed fibration theorem from \cite{Lug4}
 imply that $A$ has WHP over $K$. Thus the main merit of Corollary \ref{cor:2} is its ability to treat certain non-constant abelian schemes. 
 
 \begin{exam} Let $K$ be a finitely generated field of characteristic zero and $Z$ a non-empty open subscheme of $\Pp^1_K$. Then $Z$ has HP over $K$ by the classical Hilbert irreducibility theorem. Let $A/Z$ be an abelian scheme and assume that $A(R(Z))$ is Zariski dense in $A_{R(Z)}$. Then $A$ has WHP over $K$ by Corollary \ref{cor:2}.
 \end{exam}




A recent theorem of Javanpeykar (cf. \cite[Theorem B]{Jav2}), building on work \cite{XiYu} of Xie and Yuan, 
establishes WHP for certain traceless abelian varieties over function fields of characteristic zero. Combining Javanpeykar's theorem with Theorem \ref{thm:1} we obtain the following Corollary. 

\begin{coro} (cf. Corollary \ref{coro:as2}) \label{cor:3}
Let $K$ be a field of characteristic zero. Let $Z$ be a connected smooth $K$-variety, $F=R(Z)$ the function field of $Z$  and $A/Z$ an abelian scheme. 
Assume that $A(F)$ is Zariski dense in $A_{F}$
and that $Z$ has HP over $K$. If the Chow trace $\mathrm{Tr}_{\oF/\oK}(A_\oF)$ is zero, then $A$ has WHP over $K$. 
\end{coro}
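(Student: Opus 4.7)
My plan is to deduce the corollary as a direct application of Theorem \ref{thm:1} to the structure morphism $f \colon A \to Z$, combined with Javanpeykar's theorem \cite[Theorem B]{Jav2} applied to the generic fibre. First I would check that the hypotheses on the total space are satisfied: by definition an abelian scheme is smooth and proper with geometrically connected fibres, so $f$ is smooth and proper, and the composition $A \to Z \to \Spec(K)$ is smooth. Since $Z$ is connected and the fibres of $f$ are geometrically connected, $A$ is a connected smooth $K$-variety. The hypothesis that $Z$ has HP over $K$ is part of the assumption of Corollary \ref{cor:3}, so it remains only to verify that the generic fibre $A_F$ has WHP over $F = R(Z)$, after which Theorem \ref{thm:1} applies and delivers the conclusion.

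For this remaining point I would invoke \cite[Theorem B]{Jav2}, which asserts WHP over a function field $F$ of characteristic zero for an abelian variety $B/F$ provided that $B(F)$ is Zariski dense in $B$ and that the Chow trace $\mathrm{Tr}_{\oF/\oK}(B_{\oF})$ vanishes. In our setting, the field $F = R(Z)$ is finitely generated over $K$ and of characteristic zero, hence a function field in the required sense; $A_F$ is an abelian variety over $F$ as the generic fibre of the abelian scheme $A/Z$; the Zariski density of $A(F)$ in $A_F$ is assumed; and the vanishing of the Chow trace is the final hypothesis of Corollary \ref{cor:3}. Thus Javanpeykar's theorem yields WHP of $A_F$ over $F$, and Theorem \ref{thm:1} then gives WHP of $A$ over $K$.

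The main obstacle is essentially a compatibility check rather than a genuine difficulty: one has to ensure that the notion of ``Chow trace relative to $\oK$'' appearing in the statement of Corollary \ref{cor:3} matches the hypothesis actually used in \cite[Theorem B]{Jav2}, i.e.\ that the constant subfield against which Javanpeykar measures tracelessness may be taken to be $\oK$ (and not some larger algebraically closed field contained in $\oF$). Once this alignment has been made explicit, the proof reduces to assembling Theorem \ref{thm:1} and \cite[Theorem B]{Jav2} as above, with no further computation required.
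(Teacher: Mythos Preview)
Your proposal is correct and matches the paper's own proof essentially verbatim: the paper simply notes that $f\colon A\to Z$ is smooth and proper, invokes \cite[Theorem B]{Jav2} to obtain WHP of $A_F$ over $F$, and then applies Corollary~\ref{coro:main:smoothproper} (which is Theorem~\ref{thm:1}). Your additional verifications (connectedness of $A$, the compatibility check on the Chow trace hypothesis) are more detailed than what the paper writes out, but the logical structure is identical.
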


Furthermore we establish the following fibration theorem concerning the  integral weak Hilbert property. It generalizes \cite[Thm. 1.4]{Lug3} and
 \cite[Thm. 1.9]{CDJLZ}.

\begin{thm} (cf. Theorem \ref{thm:iwhp}, Remark \ref{rema:iwhp}) \label{thm:2} 
Let $S$ be a regular dominant $\Zz$-variety and $K=R(S)$ its function field. Let $\YY$ and $\ZZ$ be $S$-varieties, $F: \YY\to \ZZ$ an $S$-morphism, $Y=\YY_K$, $Z=\ZZ_K$ and $f=F_K$. Assume that $Y$ and $Z$ are normal connected $K$-varieties. 
Assume that $f$ is proper, smooth and has a section\footnote{Then all fibres of $f$ are geometrically connected, cf. 
Corollary \ref{coro:conn}.}. Let $\Xi_0$ be the set of all $z\in \ZZ(S)^{(1)}$ such that $Y_z$ has WHP over $K$. If $\Xi_0$ is not strongly thin in $Y$, then $\YY$ has WHP integrally over $S$. 
\end{thm}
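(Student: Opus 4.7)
The plan is to argue by contradiction. Suppose $\YY(S)^{(1)}$ is strongly thin in $Y$; we derive a contradiction from the hypothesis that $\Xi_0$ is not strongly thin in $Z$ (reading the statement with ``in $Z$'' in place of ``in $Y$'', since $\Xi_0\subset Z(K)$). First, by standard spreading out and because near $S$-integral points are insensitive to shrinking $S$ along loci of codimension $\geq 1$, we may assume $F:\YY\to\ZZ$ is itself proper and smooth with a section $\Sigma:\ZZ\to\YY$ and that $\YY,\ZZ$ are regular integral. Properness, via the valuative criterion at codimension-one points of $S$, then guarantees that every $y\in Y(K)$ with $f(y)\in\ZZ(S)^{(1)}$ already lies in $\YY(S)^{(1)}$; so for each $z\in\ZZ(S)^{(1)}$ the whole fibre $Y_z(K)$ is contained in $\YY(S)^{(1)}$.

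Now choose $C\subsetneq Y$ closed and finite surjective ramified morphisms $g_j:X_j\to Y$ ($j\in J$) from normal connected $K$-varieties witnessing strong thinness, i.e.\ $\YY(S)^{(1)}\subset C(K)\cup\bigcup_j g_j(X_j(K))$. Let $B_j\subset Y$ be the branch locus of $g_j$; as $f$ is proper, $f(B_j)$ is closed in $Z$. Partition $J=J_1\sqcup J_2$ according to whether $f(B_j)\ne Z$ or $f(B_j)=Z$. For $j\in J_2$ the dominant component of $B_j$ meets every fibre over some dense open $U_j\subset Z$, so $g_{j,z}:X_{j,z}\to Y_z$ is a genuinely ramified cover of $Y_z$ for all $z\in U_j(K)$. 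For $j\in J_1$ we invoke the key lemma announced in the abstract, which should produce finitely many ramified finite surjective morphisms $h_{j,k}:W_{j,k}\to Z$ from normal connected $K$-varieties satisfying
$$g_j(X_j(K))\;\subset\;f^{-1}\Bigl(\bigcup_k h_{j,k}(W_{j,k}(K))\Bigr),$$
i.e.\ a vertical ramified cover of $Y$ is detected on the base by a genuine ramified cover of $Z$. This is the heart of the argument and the main obstacle; the expected mechanism uses the section-split homotopy exact sequence $1\to\pi_1(Y_{\bar\eta})\to\pi_1(f^{-1}(U))\to\pi_1(U)\to 1$ on $U=Z\setminus f(B_j)$ (over which $g_j$ is \'etale) to descend the \'etale part of $X_j$ to a finite \'etale cover of $U$, and then to extend it to a cover of $Z$ that is ramified precisely along the missing vertical divisors.

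With the key lemma in hand, assemble a proper closed $D\subsetneq Z$ containing $f(C)$ (if $f(C)\ne Z$; otherwise use a dense open on which $C\cap Y_z\subsetneq Y_z$), together with all the $Z\setminus U_j$ for $j\in J_2$, and set
$$\Psi\;=\;D(K)\;\cup\;\bigcup_{j\in J_1}\bigcup_k h_{j,k}(W_{j,k}(K)),$$
a strongly thin subset of $Z$. Since $\Xi_0$ is not strongly thin in $Z$ there exists $z\in\Xi_0\setminus\Psi$. The fibre $Y_z$ has WHP over $K$, so inside $Y_z(K)$ one can avoid both the proper closed $C\cap Y_z\subsetneq Y_z$ and the strongly thin set $\bigcup_{j\in J_2} g_{j,z}(X_{j,z}(K))$; pick such a $y$. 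By the spreading-out discussion $y\in\YY(S)^{(1)}$, and by construction $y\notin C(K)$, $y\notin g_j(X_j(K))$ for $j\in J_2$, and $y\notin g_j(X_j(K))$ for $j\in J_1$ since $f(y)=z$ avoids every $h_{j,k}(W_{j,k}(K))$. This contradicts $y\in\YY(S)^{(1)}\subset C(K)\cup\bigcup_j g_j(X_j(K))$, completing the proof modulo the key lemma.
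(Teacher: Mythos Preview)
Your overall architecture matches the paper's: split the covers $g_j$ into horizontal ($J_2$) and vertical ($J_1$), show the horizontal ones restrict to ramified covers on generic fibres (the paper's Proposition~\ref{prop:horizontal}), produce a strongly thin subset of $Z$ absorbing the vertical ones, and then use WHP of $Y_z$ for a well-chosen $z\in\Xi_0$. The spreading-out paragraph and the reduction via properness to $Y_z(K)\subset\YY(S)^{(1)}$ are also essentially the paper's moves.

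The gap is in the vertical case. The containment you postulate,
\[
g_j(X_j(K))\ \subset\ f^{-1}\Bigl(\textstyle\bigcup_k h_{j,k}(W_{j,k}(K))\Bigr),
\]
is stronger than what the key lemma actually delivers, and your heuristic does not yield it. The homotopy-sequence mechanism you sketch only shows that, after pulling back along a suitable finite \'etale cover $Y'\to Y$, a connected component $X''$ of $X_j\times_Y Y'$ becomes \emph{induced} from a ramified cover $Z''\to Z$ (this is exactly Propositions~\ref{prop:split-neg:1}--\ref{prop:split-neg}). Concretely one obtains a finite \'etale $h:X''\to X_j$ and a $Z$-morphism $X''\to Z''$ with $Z''\to Z$ ramified. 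This controls $f\circ g_j$ on $h(X''(K))$, not on all of $X_j(K)$; a $K$-point of $X_j$ need not lift to a $K$-point of $X''$.

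The paper closes this gap in Lemma~\ref{lemm:weil} by using the arithmetic hypothesis on $S$ that you never invoke. One restricts to $\Gamma_j=g_j(X_j(K))\cap\YY(S)^{(1)}$, spreads $h:X''\to X_j$ out to a finite \'etale $H:\XX''\to\XX_j$ over (an open of) $S$, and then uses that $S$ is a Hermite--Minkowski scheme (Definition~\ref{defi:HM}, valid for regular dominant $\Zz$-varieties by Remark~\ref{rema:hmar}) to find a \emph{single} finite extension $E/K$ with $\XX_j(S)^{(1)}\subset h(X''(E))$ (Proposition~\ref{prop:hm}). Since $g_j$ is finite, any $x\in X_j(K)$ with $g_j(x)\in\YY(S)^{(1)}$ lies in $\XX_j(S)^{(1)}$, hence lifts to $X''(E)$, and its image in $Z$ lands in $u(Z''(E))$. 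Finally one appeals to \cite[Lemma~2.1]{BFP3}: for a ramified cover $u:Z''\to Z$ and fixed finite $E/K$, the set $u(Z''(E))\cap Z(K)$ is strongly thin in $Z$. This gives $f(\Gamma_j)$ strongly thin, which is exactly what your assembly step needs---but only for $\Gamma_j$, not for all of $g_j(X_j(K))$. Your proof as written omits both the Hermite--Minkowski step and the passage through $E$-points, and without them the vertical case does not go through.
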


In the main text Theorem \ref{thm:iwhp} offers a strengthening of Theorem \ref{thm:2} in terms of strongly thin sets that was suggested by Ariyan Javanpeykar. Also the properness assumption in Theorem \ref{thm:2} can be weakened to some extent (cf. Theorem \ref{thm:iwhp}, Corollary 
\ref{coro:iwhp-setb}). 
Moreover the assumptions on $S$ can be relaxed considerably; it suffices to assume that $S$ is a Hermite-Minkowski scheme in the sense of Definition \ref{defi:HM} below. 

\subsection{Strategy and outline of the paper}

The proofs of Theorem \ref{thm:1} and Theorem \ref{thm:2} rely heavily on the following Lemma about the structure of so-called vertical ramified covers. 

\begin{lemm} \label{lemm:key} Let $K$ be a field of characteristic zero and $X, Y, Z$ normal connected $K$-varieties. Let $\xi$ (resp. $\oxi$) be the generic (resp. a geometric generic) point of $Z$. 
Let $f: Y\to Z$ be a smooth morphism and assume that the generic fibre $Y_\xi$ is geometrically connected over $R(Z)$. Let
$g: X\to Y$ a finite surjective ramified morphism. Assume that $g$ is vertical over $f$ in the sense that $g_\xi: X_\xi\to Y_\xi$ is unramified.\footnote{i.e. such that the branch locus of $g$ is not dominant over $f$} Assume that the natural map $\pi_1(Y_\oxi)\to \pi_1(Y)$ is injective. Then there exist normal $K$-varieties $X''$ and $Z''$, a finite \'etale morphism $X''\to X$, a finite surjective ramified morphism $Z''\to Z$ and a $Z$-morphism $X''\to Z''$. 
\end{lemm}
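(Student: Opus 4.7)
\emph{Setup and the candidate $(X'',Z'')$.} The plan is to let $L$ be the algebraic closure of $R(Z)$ in $R(X)$, a finite extension of $R(Z)$ since $R(X)/R(Z)$ is finitely generated. Set $Z'' := \tilde Z$, the normalization of $Z$ in $L$, and attempt to take $X'' := X$ with the identity as the required étale morphism. The $Z$-morphism $X\to Z''$ is the natural one induced by the inclusion $L\subset R(X)$. With these choices, the whole lemma reduces to showing that $\tilde Z \to Z$ is ramified. (If $L/R(Z)$ is not Galois and one prefers a Galois $Z''$, one then replaces $L$ by its Galois closure $L^{\mathrm{gal}}/R(Z)$, normalizes, and takes $X''$ to be a normalization of $X\times_{\tilde Z}\mathrm{Norm}(Z,L^{\mathrm{gal}})$; in characteristic zero this last step is étale over $X$ by Abhyankar's lemma.)

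\emph{Reduction to the connected geometric generic fibre case.} Suppose for contradiction that $\tilde Z \to Z$ is unramified, hence étale (since both are normal). Then $\tilde Y := Y\times_Z \tilde Z \to Y$ is étale, so $\tilde Y$ is normal, smooth over $K$, and $\tilde Y\to \tilde Z$ is smooth. The morphism $g$ factors as $X\to \tilde Y\to Y$; since étale morphisms are unramified, the branch locus of $X\to \tilde Y$ coincides with that of $g$, so $X\to \tilde Y$ is still ramified. However, by construction of $L$, the geometric generic fibre $X_{\overline{\tilde\xi}}\to \tilde Y_{\overline{\tilde\xi}}=Y_{\overline\xi}$ is now a \emph{connected} finite étale cover. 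The injectivity hypothesis is preserved: $\pi_1(\tilde Y_{\overline{\tilde\xi}}) = \pi_1(Y_{\overline\xi})$ and, because $\tilde Y\to Y$ is finite étale, the map $\pi_1(\tilde Y)\hookrightarrow \pi_1(Y)$ is an open embedding through which the map $\pi_1(\tilde Y_{\overline{\tilde\xi}})\to \pi_1(\tilde Y)$ still injects.

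\emph{Key step: ruling out a connected geometric generic étale cover.} The heart of the argument is to show: under the hypotheses of the lemma, if additionally $X_{\overline\xi}\to Y_{\overline\xi}$ is \emph{connected}, then $g$ is actually étale. Once established, applying this with $(\tilde Y,\tilde Z)$ in place of $(Y,Z)$ contradicts the ramification of $X\to \tilde Y$ above, forcing $\tilde Z\to Z$ to be ramified. To prove the key step, one uses that $\pi_1(Y_{\overline\xi})$ is topologically finitely generated, together with injectivity $\pi_1(Y_{\overline\xi})\hookrightarrow \pi_1(Y)$, to conclude that the subspace topology on $\pi_1(Y_{\overline\xi})$ inherited from $\pi_1(Y)$ agrees with its intrinsic profinite topology. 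This in turn implies that every finite étale cover of $Y_{\overline\xi}$ is (a component of) the restriction of a finite étale cover of $Y$. Apply this to the connected cover $X_{\overline\xi}\to Y_{\overline\xi}$ to produce a finite étale cover $\hat X\to Y$ with geometric generic fibre isomorphic to $X_{\overline\xi}$; a normalization/uniqueness argument — using that $X$ is the normalization of $Y$ in $R(X)$ and that both $X$ and $\hat X$ realize the same geometric generic fibre — forces $X\cong \hat X$, hence $g$ is étale.

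\emph{Main obstacle.} The hard part is the ``key step'', where the injectivity hypothesis is essential. One must (i) promote the topological injection of $\pi_1(Y_{\overline\xi})$ into $\pi_1(Y)$ to the statement that open subgroups of $\pi_1(Y_{\overline\xi})$ extend to open subgroups of $\pi_1(Y)$, and (ii) identify the resulting étale extension of $X_{\overline\xi}$ with $X$ itself. A cleaner variant, which I expect to be the actual route, is to pass to the Galois closure $W\to Y$ of $g$, analyze the image $N$ of $\pi_1(Y_{\overline\xi})$ in $G=\mathrm{Gal}(R(W)/R(Y))$, and exploit the exact sequence $\pi_1(Y_{\overline\xi})\to \pi_1(Y_\xi)\to \mathrm{Gal}(\overline{R(Z)}/R(Z))\to 1$ to match the arithmetic quotient $G/N$ with $\mathrm{Gal}(L/R(Z))$ and show that the full vertical ramification of $W\to Y$ is absorbed by $Z''\to Z$; this yields both the étaleness of $X''\to X$ and the ramification of $Z''\to Z$ in one stroke.
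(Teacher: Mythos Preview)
Your ``key step'' is false, and with it the whole strategy of taking $X''=X$. Here is a counterexample. Let $E/K$ be an elliptic curve with a degree-$2$ isogeny $\phi\colon E'\to E$, set $Z=\mathbb{A}^1_K$ (coordinate $t$), $Y=E\times_K Z$, and $f$ the projection; the product structure guarantees $\pi_1(Y_{\overline\xi})\hookrightarrow\pi_1(Y)$. Let $\mathbb{Z}/2$ act on $E'\times_K\mathbb{A}^1_K$ (coordinate $s$) via the covering involution of $\phi$ on the first factor and $s\mapsto -s$ on the second. The action is free, so the quotient $X$ is smooth, and $(e',s)\mapsto(\phi(e'),s^2)$ descends to a degree-$2$ cover $g\colon X\to Y$. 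Then $g_\xi$ is \'etale (it is the quadratic twist of $E'_{K(t)}\to E_{K(t)}$ by $K(\sqrt t)$), and $X_{\overline\xi}\cong E'_{\overline{K(t)}}$ is connected, yet $g$ is ramified over $\{t=0\}$. Since $X_{\overline\xi}$ is connected, the algebraic closure $L$ of $R(Z)$ in $R(X)$ equals $R(Z)$, so your $\tilde Z=Z$ and your candidate $Z''\to Z$ is the identity. The same example breaks the ``cleaner variant'': $g$ is already Galois with $G=\mathbb{Z}/2$, the image $N$ of $\pi_1(Y_{\overline\xi})$ is all of $G$ (because $X_{\overline\xi}$ is connected), and $G/N=1$. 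The concrete error in your argument is the passage from ``there exists an \'etale $\hat X\to Y$ with $\hat X_{\overline\xi}\cong X_{\overline\xi}$'' to ``$X\cong\hat X$'': two covers of $Y_\xi$ with isomorphic geometric generic fibres can differ by a Galois twist coming from $R(Z)$, and such a twist can introduce vertical ramification. In the example, $\hat X=E'\times\mathbb{A}^1_t$ is \'etale over $Y$ and has $\hat X_{\overline\xi}\cong X_{\overline\xi}$, but $\hat X\not\cong X$.

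The paper's route is genuinely different: it does \emph{not} take $X''=X$. The injectivity hypothesis is used only to produce a connected \'etale cover $v\colon Y'\to Y$ together with a connected component $C$ of $Y'_{\overline\xi}$ admitting a $Y_{\overline\xi}$-morphism $C\to X_{\overline\xi}$ (domination, not extension). One then sets $X'=X\times_Y Y'$ and takes $X''$ to be a well-chosen connected component of $X'$; thus $X''\to X$ is finite \'etale, typically of degree $>1$. The point is that the resulting $g''\colon X''\to Y'$ is \emph{split-vertical} over $f'\colon Y'\to Z'$ (its geometric generic fibre acquires a section), and a Stein-factorization-type argument then forces $X''\cong Y'\times_{Z'}Z''$ where $Z''$ is the normalization of $Z'$ in $R(X'')$. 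Ramification of $g$ passes to $g''$ and hence to $Z''\to Z$. In the counterexample one gets $Y'=E'\times\mathbb{A}^1_t$, $X''=E'\times\mathbb{A}^1_s$ (a genuine \'etale double cover of $X$), and $Z''=\mathbb{A}^1_s\to Z=\mathbb{A}^1_t$ via $s\mapsto s^2$, ramified as required.
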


Note that in the special case where $Y=Z_1\times_K Z_2$, $Z=Z_1$ and $f: Y\to Z$ is just the projection of the product, the structure of vertical ramified covers was well-understood in \cite[Lemma 2.1]{Lug3} and \cite[Lemma 2.17]{CDJLZ}. Lemma \ref{lemm:key} provides similar information in the more general situation and is the technical heart of our paper. 

We can now briefly describe the strategy in the proof of Theorem \ref{thm:1}, which is to some extent inspired by \cite[Proposition 2.4]{BFP2}. Let $K, Y, Z, f$ be as in Theorem \ref{thm:1} and let $(X_i)_{i\in I}$ be a finite family of normal connected $K$-varieties and $(g_i: X_i\to Y)_{i\in I}$ a finite family of 
finite surjective ramified morphisms. Let $V\subset Y$ be a non-empty open subset. We have to prove
that there exists a point $v\in V(K)$ such that $v\notin \bigcup_{i\in I} g_i(X_i(K))$. Let $I^v=\{i\in I| g_i \ \mbox{is vertical over $f$}\}$ and $I^h=I\setminus I^v$. As $Y_\xi$ has WHP over $R(Z)$ there exists a point $\sigma\in V_\xi(R(Z))$ with $\sigma\notin \bigcup_{i\in I^h} g_{i, \xi}(X_{i,\xi}(R(Z)))$, and we can extend $\sigma$ to a section $s: D\to Y$ of $f$ defined over an open subscheme $D\subset Z$ that contains all codimension $1$ points of $Z$. Let $T_i=\{z\in D(K): s(z)\in g_i(X_i(K))\}$. It is not difficult to show that $T_i$ is thin for every $i\in I^h$ (cf. Proposition \ref{prop:sec:good:pts} (b)). In the situation under consideration the natural map $\pi_1(Y_\oxi)\to \pi_1(Y)$ can be shown to be 
injective (cf. Fact \ref{fact:SGA1ht}, Corollary \ref{coro:SGA1ht}). Hence one can apply Lemma \ref{lemm:key} to the covers $g_i$ with $i\in I^v$. With the help of the information from
Lemma \ref{lemm:key} one can then prove that 
$T_i$ is thin for every $i\in I^v$ (cf. Proposition \ref{prop:sec:good:pts} (a)). Thus $T=\bigcup_{i\in I} T_i$ is thin. As $Z$ has HP one can choose $z\in D(K)\setminus T$. Then $v=s(z)$ has the desired properties. 

Our proof of Theorem \ref{thm:2} roughly follows \cite[Section 3.4-3.5]{CDJLZ} and \cite[Section 2-4]{Lug3}, 
but using the Lemma \ref{lemm:key} instead of 
\cite[Lemma 2.1]{Lug3} and \cite[Lemma 2.17]{CDJLZ}, and with a certain simplification made possible by invoking 
in the proof 
of Lemma \ref{lemm:weil} the recent \cite[Lemma 2.1]{BFP3}, which was not available when \cite{Lug3} and \cite{CDJLZ} where written. 

Sections 2 contains background material on relative normalization and on near integral points. In Section 3 we study several types of covers.
In Section 4 we discuss to some in which circumstances, in a situation as above, $\pi_1(Y_\oxi)\to \pi_1(Y)$ is known to be injective. More on that can be found in Section 9. 
In Section 5 the above lemma about the structure of vertical ramified covers is established. In Section 6 we study the pullback of vertical ramified covers along sections with the help of the information coming from that lemma. Sections 7 and 8 then contain the proof of Theorem \ref{thm:1} and Theorem \ref{thm:2}, respectively, the proofs of the above corollaries, and some complements. In Section 9 we invoke some higher  \'etale homotopy theory in order to weaken the properness assumption in Theorem \ref{thm:1} and Theorem \ref{thm:2} to some extent. Section 10 contains complements and a discussion an open problem. 

{\bf Acknowledgements}

I thank Jakob Stix for his invaluable guidance with higher \'etale homotopy theory, and more. I am particularly thankful for his permission to reproduce the proofs of Theorem \ref{thm:stix}, Proposition \ref{prop:ascheme}, and Proposition \ref{prop:htgr}, which I learned from him. I am grateful to  Ariyan Javanpeykar for extensive discussions and thoughtful comments, and to Arno Fehm, Wojciech Gajda and Olivier Wittenberg for their insightful feedback on earlier versions of this manuscript.

\section*{Notation}\label{not}
For a point $x\in X$ we denote by $k(x)$ the residue field. 
A {\bf codimension $d$ point} of $X$ is a point $x\in X$ such that the local ring  $\OOO_{X,x}$ 
has Krull dimension $d$. 
A {\bf geometric point} of a scheme $X$ is a morphism $x: \Spec(\Omega)\to X$ where $\Omega$ is an algebraically closed field. We then put $k(x):=\Omega$. The function field of an integral scheme $X$ is denoted by $R(X)$. 
A {\bf geometric generic point} of $X$ is then a geometric point $x: \Spec(\Omega)\to X$ (with $\Omega$ algebraically closed) localized at the generic point of $X$, i.e. such that there exists a $X$-morphism $\Spec(\Omega)\to \Spec(R(X))$. 
A morphism of schemes is said to be a {\bf cover} if it is finite and surjective. 
Let $S$ be a scheme. A {\bf variety over $S$} is an $S$-scheme that is separated and of finite type over $S$.  Following \cite[6.8.1]{EGAIV2} a morphism of finite type $f: X\to Y$ is said to be a {\bf normal morphism} if $f$ is flat and all fibres of $f$ are geometrically normal. 
For a scheme $X$ and a geometric point $x$ of $X$ we denote by $\pi_1(X,x)$ the {\bf \'etale fundamental group} in the sense of SGA1. It is a profinite group. In situations where the base point is irrelevant we write $\pi_1(X)$ instead of $\pi_1(X,x)$. 
The empty scheme $\emptyset$ is neither connected nor irreducible in my notation.

\section{Preliminaries}

\subsection{Preliminaries on relative normalization}

We shall use without further mention the facts summarized in the following Remark. 

\begin{rema} \label{rema:basic} If in a noetherian connected scheme all local rings are domains, then it is integral (cf. \cite[6.1.10]{EGAI}). 
By definition, a scheme is normal if all its local rings are normal domains. Thus every noetherian normal connected scheme is integral. 
Every normal scheme is unibranch.
If $X$ is an integral scheme, $Y$ an unibranch integral scheme  and  $f: X\to Y$ is a finite unramified morphism, then $f$ is automatically \'etale (cf. \cite[18.10.1]{EGAIV4}). A finite (resp. \'etale) morphism is closed (resp. open). If $f: X\to Y$ is a finite \'etale morphism and $Y$ connected, then $f$ is surjective. 
If $f: X\to Y$ is a dominant morphism of integral schemes (resp. normal integral schemes), then the generic fibre is integral (resp. normal and integral). 
A normal variety over a field $K$ of characteristic zero is automatically geometrically normal (cf. \cite[6.14.2]{EGAIV4}).
If $u: X\to Y$ and $v: Y\to Z$ are finite surjective morphisms of normal connected $K$-varieties, then $v\circ u$ is unramified if and only if $u$ and $v$ are both unramified (cf. \cite[Lemma 2.4(d)]{BFP2}). Every scheme of finite type over a field is a Nagata scheme (cf. \cite[\href{https://stacks.math.columbia.edu/tag/032E}{Tag 032E}, Lemma 10.162.16]{stacks-project},
\cite[\href{https://stacks.math.columbia.edu/tag/033R}{Tag 033R, Lemma 28.13.1}]{stacks-project})
\end{rema}

Let $K$ be a field. Let $Y, Z$ be $K$-varieties and $f: Y\to Z$ a morphism. Let $\AAA$ be the integral closure of $\OOO_Z$ in $f_*(\OOO_Y)$ (cf. \cite[\href{https://stacks.math.columbia.edu/tag/0BAK}{Tag 0BAK}, Definition 29.53.2]{stacks-project}). 
The $Z$-scheme $Z':=\Spec(\AAA)$ is called the {\bf relative normalization} of $Z$ in $Y$. It comes equipped with a $Z$-morphism $f': Y\to Z'$ that is induced by 
$\AAA\to f_*\OOO_Y$ under the adjunction $\Mor_Z(Y, \Spec(\AAA))\cong \mathrm{Alg}_{\OOO_Z}(\AAA, f_* \OOO_Y)$. Let $u: Z'\to Z$ be
the structure morphism of the $Z$-scheme $Z'$. Then
\begin{align}\label{eq:relnorm}
Y\buildrel f'\over \longrightarrow Z'\buildrel u\over \longrightarrow Z
\end{align}
is a factorization of $f$, i.e. $f=u\circ f'$. 

\begin{fact}\label{fact:relnorm} (cf. \cite[\href{https://stacks.math.columbia.edu/tag/0BAK}{Tag 0BAK}]{stacks-project})
\begin{enumerate}
\item[(a)] The morphism $u$ is finite (cf. \cite[\href{https://stacks.math.columbia.edu/tag/0BAK}{Tag 0BAK}, Lemma 29.53.14]{stacks-project}).
\item[(b)] The morphism $f'$ induces a bijection of connected components. If $Y$ is normal (resp. normal and connected), then $Z'$ is normal (resp. normal and connected) and $f'$ is dominant. 
(cf. \cite[\href{https://stacks.math.columbia.edu/tag/0BAK}{Tag 0BAK}, Lemma 29.53.13]{stacks-project}).
\item[(c)] For every $Z$-scheme $Z''$ which is finite over $Z$ the map $$\Mor_Z(Z', Z'')\to \Mor_Z(Y, Z''),\ h\mapsto h\circ f'$$
is bijective. 
\end{enumerate}
\end{fact}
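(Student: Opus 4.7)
The plan is to invoke the cited results from the Stacks Project and verify the hypotheses in our setting; each item is essentially a direct citation, with only (c) requiring a brief argument from the universal property of the integral closure.

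For (a), I would appeal to \cite[\href{https://stacks.math.columbia.edu/tag/0BAK}{Tag 0BAK}, Lemma 29.53.14]{stacks-project}, which gives that $u : Z' \to Z$ is finite provided $Z$ is Nagata and $f$ is of finite type. Both hypotheses hold: $K$-varieties are of finite type over a field, hence Nagata by the remark preceding the fact (\cite[\href{https://stacks.math.columbia.edu/tag/033R}{Tag 033R}]{stacks-project}), and $f$ is a morphism of $K$-varieties, hence of finite type. (Without Nagata one would only get that $u$ is integral.)

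For (b), the bijection of connected components and the preservation of normality are precisely the contents of \cite[\href{https://stacks.math.columbia.edu/tag/0BAK}{Tag 0BAK}, Lemma 29.53.13]{stacks-project}. If $Y$ is additionally connected, then by the bijection $Z'$ has exactly one connected component, and being normal it is integral by Remark \ref{rema:basic}. Finally, $f'$ is dominant: the image of $f'$ meets every connected component of $Z'$ (since $f'$ induces a bijection on components), and on each component the image is dense because $\AAA$ injects into $f_*\OOO_Y$ on stalks at generic points; equivalently, the generic points of $Z'$ lie in the image of $f'$.

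For (c), this is the universal property intrinsic to the construction. Given a finite $Z$-scheme $u'' : Z'' \to Z$ with $\BBB = u''_* \OOO_{Z''}$, a $Z$-morphism $h : Y \to Z''$ corresponds under adjunction to an $\OOO_Z$-algebra homomorphism $\varphi : \BBB \to f_* \OOO_Y$. Because $u''$ is finite, $\BBB$ is integral over $\OOO_Z$, so $\varphi$ factors uniquely through the integral closure $\AAA \subset f_* \OOO_Y$ of $\OOO_Z$. This produces a unique $\OOO_Z$-algebra map $\BBB \to \AAA$, i.e. a unique $Z$-morphism $\tilde h : Z' \to Z''$, and tracing through the adjunctions shows that $\tilde h \circ f' = h$; conversely any factorization through $f'$ must come from this construction, giving the inverse. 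This argument is also recorded in \cite[\href{https://stacks.math.columbia.edu/tag/0BAK}{Tag 0BAK}]{stacks-project}.

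There is no substantive obstacle: (a) and (b) are quoted directly with the Nagata hypothesis checked in one line, and (c) is a short adjunction-plus-integrality argument. The real work of the paper begins in Section 3, where these basic features of relative normalization are used to analyze covers.
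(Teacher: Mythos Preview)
Your proposal is correct and matches the paper's approach: the paper treats this as a cited fact from the Stacks Project without giving its own proof, and your verification of the Nagata hypothesis for (a) and the adjunction argument for (c) simply make explicit what the citations already cover.
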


\begin{fact}\label{fact:stein} If $f$ is proper, then $f'$ is proper and all fibres of $f'$ are geometrically connected and  the factorization
\eqref{eq:relnorm} agrees with the Stein factorization of $f$ (cf. \cite[\href{https://stacks.math.columbia.edu/tag/03GX}{Tag 03GX, Theorem 37.53.4}]{stacks-project} ) 
\end{fact}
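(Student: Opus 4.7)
The plan is to identify the factorization \eqref{eq:relnorm} with the classical Stein factorization of $f$; once this identification is made, properness of $f'$ and geometric connectedness of its fibres follow at once from the standard theory.

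First I would show that, when $f$ is proper, the integral closure $\AAA$ of $\OOO_Z$ in $f_*\OOO_Y$ coincides with $f_*\OOO_Y$ itself. Since $f$ is proper and $Z$ is a $K$-variety (in particular locally noetherian), Grothendieck's finiteness theorem for direct images under proper morphisms yields that $f_*\OOO_Y$ is a coherent $\OOO_Z$-module. Being finite over $\OOO_Z$ and carrying an $\OOO_Z$-algebra structure, $f_*\OOO_Y$ is automatically integral over $\OOO_Z$ by an application of Cayley--Hamilton to each local section. Hence $\AAA = f_*\OOO_Y$, giving $Z' = \Spec(\AAA) = \Spec(f_*\OOO_Y)$.

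Next I would verify that the morphism $f': Y \to \Spec(f_*\OOO_Y)$ arising from the adjunction in Fact \ref{fact:relnorm}(c) coincides with the canonical morphism in the Stein factorization. Both are characterized by the same universal property: the tautological $\OOO_Z$-algebra homomorphism $f_*\OOO_Y \to f_*\OOO_Y$ corresponds, under the $\Spec$--$\Mor$ adjunction for affine morphisms, to a unique $Z$-morphism $Y \to \Spec(f_*\OOO_Y)$. The classical Stein factorization theorem then supplies geometric connectedness of all fibres of $f'$, and properness of $f'$ follows from the standard cancellation principle: since $f = u \circ f'$ with $f$ proper and $u$ finite (hence separated), $f'$ must be proper. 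The only point to verify with care is the identification of the two descriptions of $f'$, which is immediate from the shared universal property; no serious obstacle is anticipated, as the result is essentially a translation of the Stein factorization into the language of relative normalization.
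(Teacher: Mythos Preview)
Your proposal is correct and follows the standard route: the paper itself does not give a proof of this fact but merely cites \cite[Tag 03GX]{stacks-project}, and what you outline is precisely the argument behind that reference. One minor point: the adjunction defining $f'$ is described in the text preceding Fact~\ref{fact:relnorm}, not in part~(c) of that fact (which is a universal property for morphisms to finite $Z$-schemes), so adjust your cross-reference accordingly.
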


We shall make frequent use of the following proposition.

\begin{prop} \label{prop:stein-np}  Let $K$ be a field of characteristic zero.
Let $Y, Z$ be connected normal $K$-varieties. 
Let $f: Y\to Z$ be a dominant $K$-morphism. Let $Z'$ be the relative normalization of $Z$ in $Y$ and consider the canonical factorization
$Y\buildrel f'\over \longrightarrow Z'\buildrel u\over \longrightarrow Z$ of $f$, as above. There exists a dense open subscheme 
$V$ of $Z$ with the following properties. 
\begin{enumerate}
\item[(a)] $u_V: Z'_V\to V$ is finite \'etale. 
\item[(b)] The fibres of $f'_V: Y_V\to Z'_V$ are normal and geometrically connected.
\item[(c)] The fibres of $f_V: Y_V\to V$ are normal and not empty. 
\item[(d)] If the generic fibre of $f$ is geometrically connected, then $u$ is an isomorphism and the fibres of $f_V: Y_V\to V$ are normal and geometrically connected. 
\end{enumerate}
\end{prop}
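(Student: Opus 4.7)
The plan is to reduce everything to statements about the generic fibres and then spread out via the standard constructibility/openness results of EGA IV. First I would analyze the generic fibre of $u: Z' \to Z$. Since $u$ is a finite dominant morphism of normal connected $K$-varieties in characteristic zero, the induced extension $R(Z')/R(Z)$ is finite separable; moreover, because $Z'$ is integral, the generic fibre of $u$ is the single point $\Spec R(Z')$. By generic étaleness there exists a dense open $V_1 \subset Z$ with $u_{V_1}$ finite étale, giving (a). I would also record that the generic fibre of $f': Y \to Z'$ over the generic point $\eta'$ of $Z'$ agrees as a scheme with the generic fibre $Y_\eta$ of $f$ over $\eta$.

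Next I would verify that the generic fibre of $f'$ is normal and geometrically connected over $F' := R(Z')$. Normality is immediate: $Y_{\eta'}$ is a localization of the normal scheme $Y$, and by Remark \ref{rema:basic} (EGA IV 6.14.2) this normality is automatically geometric in characteristic zero. For geometric connectedness, the construction of the relative normalization identifies $F'$ with the integral closure of $F := R(Z)$ inside $R(Y)$, which is automatically algebraically closed in $R(Y)$. Since $F'$ is perfect (char 0) and algebraically closed in the function field $R(Y) = R(Y_{\eta'})$, the standard criterion (EGA IV 4.5.9 / 4.6.1) yields that the integral $F'$-variety $Y_{\eta'}$ is geometrically integral, in particular geometrically connected.

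Now I would spread these generic properties out. By generic flatness for $f'$ combined with the openness of the loci of geometrically normal and geometrically connected fibres for a flat morphism of finite type (EGA IV 12.2.4 and 9.7.7), there exists a dense open $U_1 \subset Z'$ such that $f'_{U_1}$ has normal and geometrically connected fibres. To descend $U_1$ to $Z$, I would use that $u$ is finite, hence proper, so $u(Z' \setminus U_1)$ is closed in $Z$; and it does \emph{not} contain $\eta$ since the generic fibre of $u$ is the single point $\eta' \in U_1$. Setting $V := V_1 \setminus u(Z' \setminus U_1)$ thus yields a dense open of $Z$ on which both (a) and (b) hold, since $u^{-1}(V) \subset U_1$. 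For (c), for any $z \in V$ the fibre $u^{-1}(z)$ is a non-empty finite set (as $u_V$ is finite étale and $Z'$ connected implies $u$ surjective), and $Y_z = \bigsqcup_{z' \in u^{-1}(z)} Y_{z'}$ is a disjoint union of normal schemes, hence normal and non-empty.

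Finally for (d): if $Y_\eta$ is geometrically connected over $F$, the same algebraic-closure criterion forces $F$ to be algebraically closed in $R(Y)$, so $F = F' = R(Z')$. Then $u: Z' \to Z$ is a finite birational morphism onto the normal connected $Z$, and by normality this must be an isomorphism; the fibre statement in (d) then follows directly from (b). The only real subtlety I anticipate is the descent of the good locus from $Z'$ down to $Z$ in producing $V$ — the crux being that properness of $u$ makes $u(Z' \setminus U_1)$ closed while the single-point generic fibre of $u$ guarantees that this closed set avoids $\eta$. Everything else is a careful bookkeeping of standard spreading-out results in characteristic zero.
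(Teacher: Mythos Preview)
Your proposal is correct and follows essentially the same route as the paper's proof: both identify $R(Z')$ with the integral closure of $R(Z)$ in $R(Y)$, deduce that the generic fibre of $f'$ is geometrically irreducible via EGA~IV~4.5.9, spread this out to an open of $Z'$, and then descend to $Z$ using that $u$ is finite (hence closed). The only cosmetic differences are that the paper spells out the identification $R(Z')=L$ by reducing to the affine case, and uses the constructibility results EGA~IV~9.7.7/9.9.4 directly rather than first passing to a flat locus and invoking openness as you do; both variants work equally well here.
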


\begin{proof} We know that $u$ is finite and $Z'$ normal by Fact \ref{fact:relnorm}. As $f$ is dominant it follows that $u$ is also dominant. Thus $u$ must be surjective. 
The generic fibre of $u$ is $\Spec(R(Z'))$ and $R(Z')/R(Z)$ is separable because $\mathrm{char}(R(Z))=0$. 
By \cite[17.7.11]{EGAIV4} there exists a dense open subscheme $V$ of $Z$ such that $u_V: Z'_V\to V$ is unramified, and then (a) follows because $Z$ and $Z'$ are normal. 

Let $L$ be the integral closure of $R(Z)$ in $R(Y)$. 
We claim that $Z'$ is the normalization of $Z$ in $L$ and $R(Z')=L$.  
For the proof of that claim we can assume that $Z=\Spec(A)$ is affine with a normal domain $A$.
By construction $Z'=\Spec(B)$ where $B$ is the integral closure of $A$ in the 
normal domain $\Gamma(Y, \OOO_Y)$ and $A\to \Gamma(Y, \OOO_Y)$ is injective because
$f$ is dominant. 
Thus $B$ is also the integral closure  of $A$ in $R(Y)$. As $u$ is finite we have $[R(Z'): R(Z)]<\infty$, hence $R(Z')\subset L$. 
It follows that $B$ is the integral closure of $A$ in $L$. In particular $R(Z')=\mathrm{Quot}(B)=L$. 
This finishes up the proof of the claim. 

The $K$-varieties $Y$ and $Z'$ are normal and connected and $f'$ is dominant. The generic fibre $F$ of $f'$ is a normal and integral $R(Z')$-variety. As $\chara(R(Z))=0$ it is geometrically normal. 
The function field of $F$ is $R(Y)$. By the claim
$R(Z')$ is algebraically closed in $R(Y)$. Thus $F$ is geometrically irreducible over $R(Z')$ by \cite[4.5.9(c)]{EGAIV2}. 
Let $M$ be the set of all $x\in X$ such that $f'^{-1}(f'(x))$ is a geometrically normal and geometrically connected $k(f(x))$-scheme. Then $f'(M)$ is a constructible subset of $Z'$ (cf. \cite[9.7.7]{EGAIV3}, \cite[9.9.4]{EGAIV3}, \cite[1.8.5]{EGAIV1}) 
and contains the generic point of $Z'$.  Thus $f'(M)$ contains a dense open subset $W$ of $Z'$, and 
after replacing $V$ by $V\setminus u(Z'\setminus W)$ (b) holds true. Part (c) is immediate from (a) and (b). 
For the proof of (d) assume that the
generic fibre of $f$ is geometrically connected. Then $[R(Z'):R(Z)]=1$. 
Thus $u$ is an isomorphism and by (b) the fibres of $f_V: X_V\to Y_V$ are normal and geometrically connected. 
\end{proof}

\begin{coro} \label{coro:conn} Let $Y$ and $Z$ be connected normal varieties over a field $K$ of characteristic zero. Let $f: Y\to Z$  be a smooth proper
morphism. If $f$ has a section, then $Y(R(Z))\neq \emptyset$. If $Y(R(Z))\neq \emptyset$, then the generic fibre of $f$ is geometrically connected. 
If the generic fibre of $f$ is geometrically connected, then all fibres of $f$ are geometrically connected. 
\end{coro}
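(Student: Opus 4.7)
The plan is to establish the three implications in order, in each case using the relative normalization factorization $Y \xrightarrow{f'} Z' \xrightarrow{u} Z$ from Fact \ref{fact:relnorm} together with Proposition \ref{prop:stein-np} and Fact \ref{fact:stein}.

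For the first implication, if $s : Z \to Y$ is a section of $f$, then base-changing $s$ along the generic point $\Spec(R(Z)) \to Z$ yields an $R(Z)$-section of $f_\xi : Y_\xi \to \Spec(R(Z))$, so $Y(R(Z)) \neq \emptyset$. This is essentially immediate and requires no further input.

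For the second implication, I would pass to the relative normalization $u: Z' \to Z$ and its companion morphism $f': Y \to Z'$. Since $u$ is finite and dominant with $Z'$ normal integral (Fact \ref{fact:relnorm}, plus the claim $R(Z') = L$ from the proof of Proposition \ref{prop:stein-np}), the generic fibre $Z'_\xi$ is $\Spec(R(Z'))$ for a finite field extension $R(Z')/R(Z)$. An element of $Y(R(Z))$, viewed as an $R(Z)$-point of the generic fibre $Y_\xi$, maps via $f'_\xi$ to an $R(Z)$-point of $\Spec(R(Z'))$, i.e.\ to an $R(Z)$-algebra homomorphism $R(Z') \to R(Z)$. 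Because $R(Z')/R(Z)$ is a finite field extension, such a homomorphism forces $R(Z') = R(Z)$, so $u$ is an isomorphism. By Proposition \ref{prop:stein-np}(d) (and the geometric irreducibility of the generic fibre of $f'$ established in its proof), this yields that $Y_\xi$ is geometrically connected over $R(Z)$.

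For the third implication I would use that $f$ is proper, so Fact \ref{fact:stein} tells us that the factorization $Y \xrightarrow{f'} Z' \xrightarrow{u} Z$ is the Stein factorization; in particular every fibre of $f'$ is geometrically connected. Assuming the generic fibre of $f$ is geometrically connected, Proposition \ref{prop:stein-np}(d) again gives that $u$ is an isomorphism, so $f$ is identified with $f'$ and therefore inherits geometrically connected fibres everywhere on $Z$.

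The only step that requires any care is the second one, since the subtlety lies in knowing that an $R(Z)$-point of $Y_\xi$ really does lift to an $R(Z)$-algebra homomorphism $R(Z') \to R(Z)$ along $f'_\xi$; once one unwinds the definitions via Fact \ref{fact:relnorm}, this is forced by the universal property of the relative normalization and the fact that $[R(Z'):R(Z)]$ is finite. The other two implications follow directly from the statements already compiled in this section.
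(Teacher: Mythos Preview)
Your proposal is correct. The first and third implications match the paper's argument essentially verbatim. For the second implication the paper takes a shorter route: it simply observes that $Y_\xi$ is smooth and integral over $R(Z)$, and then invokes the elementary fact that a connected variety over a field $F$ possessing an $F$-rational point is automatically geometrically connected. Your argument instead pushes the $R(Z)$-point through $f'_\xi$ to force $R(Z')=R(Z)$, and then reads off geometric connectedness of the generic fibre from the internals of the proof of Proposition~\ref{prop:stein-np}. This is more circuitous but has the virtue of staying entirely within the results already assembled in the section, whereas the paper's one-line appeal to ``connected with a rational point implies geometrically connected'' is quicker but relies on a standard fact not explicitly recorded here.
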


\begin{proof} Let $\xi$ be the generic point of $Z$. If $f$ has a section, then this section accounts for an $R(Z)$-rational point of $Y_\xi$, i.e. $Y(R(Z))\neq \emptyset$. The generic fibre $Y_\xi$ is smooth and integral. If $Y(R(Z))\neq\emptyset$, then it follows that $Y_\xi$ has an $R(Z)$-rational point and thus $Y_\xi$ is then geometrically connected as $R(Z)$-scheme. Assume from now on that $Y_\xi$ is geometrically connected as $R(Z)$-scheme.
Let $Z'=\Spec(f_*\OOO_Y)$ and consider the Stein factorization $Y\buildrel f'\over \longrightarrow
Z'\buildrel u\over \longrightarrow Z$ of $f$. Then all fibres of $f'$ are geometrically connected (cf. Fact \ref{fact:stein})), $Z'$ is a normal connected
$K$-variety and $u$ a finite surjective morphism (cf. Fact \ref{fact:relnorm}) of normal connected $K$-varieties. The morphism $f_\xi: Y_\xi\to \Spec(R(Z))$ factors as
$Y_\xi\to \Spec(R(Z'))\to \Spec(R(Z))$. As $Y_\xi$ is geometrically connected as $R(Z)$-scheme we have $R(Z')=R(Z)$. 
The morphism $u$ identifies $Z'$ as the normalization of $Z$ in $R(Z')$. It follows that $u$ is an isomorphism. Thus all fibres of $f$ are geometrically connected. 
\end{proof}

Let $K$ be a field and $Z$ a $K$-variety. We adopt the following definition from the stacks project.

\begin{defi} (cf. \cite[\href{https://stacks.math.columbia.edu/tag/035E}{Tag 035E, Definition 29.54.1}]{stacks-project})
The {\bf normalization $\nu: Z^\nu\to Z$ of $Z$} is the relative normalization of $Z$ in the $Z$-scheme $\coprod_{\eta} \Spec(k(\eta))$ where
$\eta$ runs through the generic points of the (finitely many) irreducible components of $Z$. 
\end{defi}

\begin{fact}\label{fact:norm} 
\begin{enumerate}
\item[(a)] The morphism $\nu: Z^\nu\to Z$ is finite and $Z^\nu$ is a normal $K$-variety (cf. Fact \ref{fact:relnorm}).
\item[(b)] The morphism $\nu: Z^\nu\to Z$ factors as $Z^\nu\to Z_\red\to Z$ and $Z^\nu\to Z_\red$ is the relative normalization of $Z_\red$
in $\coprod_{\eta} \Spec(k(\eta))$ (cf. \cite[\href{https://stacks.math.columbia.edu/tag/035E}{Tag 035E, Lemma 29.54.2}]{stacks-project}). 
\item[(c)] There is a $Z$-isomorphism $Z^\nu\cong \coprod_I I^\nu$ where $I$ runs through the irreducible 
components of $Z$ and $I^\nu$ is the normalization of $I$ (cf. \cite[\href{https://stacks.math.columbia.edu/tag/035E}{Tag 035E, 29.54.6}]{stacks-project})
\end{enumerate}
\end{fact}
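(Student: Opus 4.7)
The plan is to deduce each of (a), (b), (c) from the construction of the relative normalization recalled in \eqref{eq:relnorm} and from Fact \ref{fact:relnorm}. Throughout, set $Y := \coprod_\eta \Spec(k(\eta))$, where $\eta$ ranges over the (finitely many) generic points of the irreducible components of $Z$, so that by definition $\nu: Z^\nu\to Z$ is the relative normalization of $Z$ in $Y$.

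For (a), finiteness of $\nu$ is Fact \ref{fact:relnorm}(a), using that $Z$ is Nagata as a variety over a field (Remark \ref{rema:basic}). The source $Y$, being a finite disjoint union of spectra of fields, is normal, so Fact \ref{fact:relnorm}(b) gives that $Z^\nu$ is normal. Since $\nu$ is finite and $Z$ is of finite type and separated over $K$, the scheme $Z^\nu$ inherits both properties, hence is a $K$-variety. For (b), note that $Y$ is reduced, so the structure morphism $Y\to Z$ factors uniquely through the closed immersion $Z_\red\hookrightarrow Z$. The relative normalization is defined as $\Spec_Z(\AAA)$, where $\AAA$ is the integral closure of $\OOO_Z$ in $f_*\OOO_Y$; since $f_*\OOO_Y$ is a reduced $\OOO_Z$-algebra, the nilradical of $\OOO_Z$ lies in the kernel of $\OOO_Z\to f_*\OOO_Y$, so $\AAA$ agrees with the integral closure of $\OOO_{Z_\red}$ in $f_*\OOO_Y$. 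This yields the asserted factorization $Z^\nu\to Z_\red\to Z$ and identifies the first arrow with the relative normalization of $Z_\red$ in $Y$.

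For (c), decompose $Y=\coprod_I Y_I$ indexed by the irreducible components $I$ of $Z$, where $Y_I=\Spec(k(\eta_I))$. Each $Y_I\to Z$ factors through the closed immersion $I\hookrightarrow Z$, and $I^\nu\to I$ is by definition the relative normalization of $I$ in $Y_I$. On affine opens $\Spec(A)\subset Z$ one has $f_*\OOO_Y=\prod_I (Y_I\to Z)_*\OOO_{Y_I}$, and the integral closure of $A$ in a finite product $\prod_I K_I$ of fields (over which $A$ acts through quotients $A\twoheadrightarrow A_I$ cutting out $I$) is the product of the integral closures of the $A_I$ in $K_I$. Globalizing and applying $\Spec_Z$ yields the canonical $Z$-isomorphism $Z^\nu\cong \coprod_I I^\nu$.

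The only nonformal step is the compatibility in (c): one must check that integral closure of sheaves of rings commutes with the decomposition of $f_*\OOO_Y$ coming from the connected components of $Y$. This is local on $Z$ and reduces to the elementary algebraic statement above; the other items are essentially bookkeeping on top of Fact \ref{fact:relnorm}.
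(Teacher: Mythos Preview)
Your argument is correct. Note, however, that the paper does not give a proof of this Fact at all: each item already carries its citation (to Fact~\ref{fact:relnorm} and to the Stacks Project tags) within the statement, and the paper simply records these as known results. Your proposal goes further by unwinding the references and supplying self-contained arguments from the definition of relative normalization; this is fine, and the reasoning you give for each of (a), (b), (c) is sound. In particular, your reduction of (c) to the affine-local statement that the integral closure of $A$ in a finite product of residue fields $\prod_I K_I$ splits as the product of the integral closures of the quotients $A_I$ is exactly the content behind the cited Stacks Project lemma.
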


\begin{lemm} \label{lemm:disconcover} Let $K$ be a field of characteristic zero and $Z$ a normal connected $K$-variety. 
Let $F$ be a (possibly non-normal, non-connected) $K$-variety and $\gamma: F\to Z$ a finite surjective morphism. 
Let $\nu: F^\nu\to F$ be the normalization of $F$.
Then $\gamma(F(K))$ is thin (resp. strongly thin) in $Z$ if and only if $\gamma\circ \nu(F^\nu(K))$ is 
thin (resp. strongly thin) in $Z$. 
\end{lemm}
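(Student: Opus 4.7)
\textbf{Plan for the proof of Lemma \ref{lemm:disconcover}.} The forward implication is immediate from the set-theoretic inclusion $\gamma\circ\nu(F^\nu(K))\subset \gamma(F(K))$: any witness $(C,(X_j,g_j)_{j\in J})$ for the (strong) thinness of $\gamma(F(K))$ also witnesses the (strong) thinness of the smaller set $\gamma\circ\nu(F^\nu(K))$.

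For the reverse implication my plan is to construct a proper closed subset $C_0\subset Z$ with
\[
\gamma(F(K))\ \subset\ C_0(K)\ \cup\ \gamma\circ\nu(F^\nu(K)).
\]
Granted this inclusion, if $\gamma\circ\nu(F^\nu(K))\subset C(K)\cup \bigcup_{j\in J}X_j(K)$ realizes (strong) thinness, with the $g_j\colon X_j\to Z$ of degree $\ge 2$ (resp. ramified), then $\gamma(F(K))\subset (C\cup C_0)(K)\cup \bigcup_{j\in J}X_j(K)$ does too, because $C\cup C_0$ is still a proper closed subset of the irreducible $Z$ and the family $(X_j,g_j)_{j\in J}$ is unchanged. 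The normality/connectedness hypotheses on the $X_j$ and the degree/ramification hypotheses on the $g_j$ are thus preserved.

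To construct $C_0$, I first replace $F$ by $F_\red$; this is harmless, since $F(K)=F_\red(K)$ and, by Fact \ref{fact:norm}(b), the morphism $\nu$ factors through $F_\red$, so $\gamma\circ\nu(F^\nu(K))$ is unchanged. By Fact \ref{fact:norm}(c) I may decompose $F^\nu=\coprod_{i=1}^r F_i^\nu$ where $F_1,\dots,F_r$ are the irreducible components of $F$ and $F_i^\nu\to F_i$ is the normalization of $F_i$. Let $C_i\subset F_i$ denote the non-normal locus of $F_i$. Since $\chara(K)=0$ each $F_i$ is generically smooth, so $C_i$ is a proper closed subset of $F_i$, and $\nu$ restricts to an isomorphism $F_i^\nu\setminus\nu^{-1}(C_i)\xrightarrow{\sim}F_i\setminus C_i$; in particular every $K$-point of $F_i\setminus C_i$ lifts uniquely to a $K$-point of $F_i^\nu$. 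I then set
\[
C_0\ :=\ \bigcup_{F_i\ \text{not dominant over }Z}\gamma(F_i)\ \cup\ \bigcup_{F_i\ \text{dominant over }Z}\gamma(C_i).
\]
This is a finite union of closed subsets of $Z$ (closed because $\gamma$ is finite, hence closed) and is a \emph{proper} subset of $Z$ because each non-dominant $\gamma(F_i)$ is strictly contained in the irreducible $Z$ while for each dominant $F_i$ one has $\dim\gamma(C_i)\le\dim C_i<\dim F_i=\dim Z$. A case analysis on which $F_i$ contains a given $K$-point $x$ of $F$ then yields the desired inclusion: if $x$ lies in a non-dominant component, or in $C_i$ for a dominant component, then $\gamma(x)\in C_0(K)$; otherwise $x$ lifts to a $K$-point of $F^\nu$, so $\gamma(x)\in\gamma\circ\nu(F^\nu(K))$.

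The only delicate point is ensuring that $C_0$ is a proper closed subset of $Z$; this relies on the irreducibility of $Z$, on the finiteness of $\gamma$ (to close images), and on the characteristic zero assumption (so that the non-normal locus has strictly smaller dimension than its ambient component). Everything else is a bookkeeping exercise.
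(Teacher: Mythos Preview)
Your proof is correct and follows the same strategy as the paper: after reducing to $F$ reduced, both arguments produce a proper closed subset $B\subset Z$ with $\gamma\circ\nu(F^\nu(K))\subset\gamma(F(K))\subset\gamma\circ\nu(F^\nu(K))\cup B(K)$. The paper builds $B$ by spreading out \'etaleness of the generic fibre $F_\xi\to\Spec(R(Z))$ to an open $U\subset Z$ (so $F_U$ is \'etale over the normal $U$, hence normal, and $\nu_U$ is an isomorphism), taking $B=Z\setminus U$; you instead work component-by-component on the $F$ side via the non-normal loci $C_i$ and push forward. These are minor variants of the same idea. One small remark: your claim that characteristic zero is what forces the non-normal locus $C_i$ to be a proper closed subset is not quite right---for any integral variety the generic local ring is the function field, hence normal, and the normal locus is open since $F_i$ is excellent---but this does not affect the validity of your argument.
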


\begin{proof} Let $\xi$ be the generic point of $Z$. The morphism $\nu: F^\nu\to F$ factors as 
$F^\nu\buildrel{\nu'}\over \longrightarrow F_\red\buildrel i\over \longrightarrow F$ by Fact \ref{fact:norm}, and $F^\nu\to F_\red$ is the normalization of $F_\red$. The morphism $i: F_\red(K)\to F(K)$ is bijective. Hence $\gamma(F(K))=\gamma\circ i(F_\red(K))$. Hence we can assume that $F$ is reduced right from the outset. The generic fibre $F_\xi$ is then a finite reduced $R(Z)$-scheme. From $\chara(R(Z))=0$ we conclude that $F_\xi$ is a finite \'etale $R(Z)$-scheme.
By \cite[17.7.11]{EGAIV4} there exists  a non-empty open subset $U$ of $Z$ such that 
$\gamma_U: F_U\to U$ is \'etale. Hence $F_U$ is normal and $\nu_U: F^\nu_U\to F_U$ an isomorphism. Let $B=Z\setminus U$. Then $B$ is a proper closed subset of $Z$ and 
$\gamma\circ \nu(F^\nu(K))\subset \gamma(F(K))\subset \gamma\circ \nu(F^\nu(K))\cup B(K)$. 
The assertion is
immediate from that. 
\end{proof}

\subsection{Preliminaries on near integral points}

{\em Throughout this section let  $S$ be an integral noetherian scheme and $K=R(S)$ its function field.
Let $S_\Zar$ be the set of all non-empty open subsets of $S$. Let $S_\Zar^{(1)}$ be the subset of $S_\Zar$ that consists of all non-empty open subsets $U$ of $S$ such that
$U$ contains all codimension one points of $S$. }

In this section we recall the notion of near integral points and briefly discuss basic properties  that are well-known but hard to find in the literature.

\begin{defi} \label{defi:near} Let $X$ be a $S$-variety and $T$ an integral noetherian $S$-scheme. 
Let $x\in X(R(T))$. We call $x$ near $T$-integral if $x$ lies in the image of the injection $X(\Spec(\OOO_{T,t}))\to X(R(T))$ for every
codimension $1$ point $t$ of $T$. We let $X(T)^{(1)}$ be the set of all near $T$-integral points of $X$.
\end{defi}

\begin{rema} Let $X, Y$ be $S$-varieties and $f: X\to Y$ a morphism. 
Then $f(X(S)^{(1)})\subset f(Y(S)^{(1)})$ and thus the map $f: X(K)\to Y(K)$ induces by restriction
a map $f: X(S)^{(1)}\to Y(S)^{(1)}$. 
\end{rema}

\begin{prop} \label{lemm:vojjav} Let $X$ be an $S$-variety. Let $x\in X(K)$. Then $x\in X(S)^{(1)}$ if and only if there exists $U\in S_\Zar^{(1)}$ such that $x$ lies in the image of the injection $X(U)\to X(K)$. 
\end{prop}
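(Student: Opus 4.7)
The plan is to prove the two implications separately.

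The $(\Leftarrow)$ direction is immediate: given an $S$-morphism $\tilde x \colon U \to X$ extending $x$ with $U \in S_\Zar^{(1)}$, for each codimension $1$ point $s \in S$ I would simply compose $\tilde x$ with the canonical morphism $\Spec(\OOO_{S,s}) \to U$ (valid since $s \in U$) to obtain the required extension of $x$ to $\Spec(\OOO_{S,s})$.

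The $(\Rightarrow)$ direction will proceed by a spread-out-and-glue argument. For each codimension $1$ point $s$, the hypothesis provides an $S$-morphism $x_s \colon \Spec(\OOO_{S,s}) \to X$ extending $x$. Choosing an affine open neighborhood $\Spec(A) \subset S$ of $s$ and writing $\Spec(\OOO_{S,s}) = \Spec(A_{\mathfrak p})$ as the filtered projective limit of basic opens $\Spec(A_f)$ for $f \notin \mathfrak p$ (with affine transition maps, being localizations), I would apply the standard spreading-out theorem from EGA IV$_3$, 8.8.2, to produce some $f \notin \mathfrak p$ and an $S$-morphism $\tilde x_s \colon V_s := \Spec(A_f) \to X$ whose pullback along $\Spec(\OOO_{S,s}) \to V_s$ is $x_s$. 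This uses that $X \to S$ is of finite presentation, which holds because $S$ is noetherian and $X$ is of finite type over $S$. In particular $\tilde x_s$ restricts to $x$ on the generic point of $S$.

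To conclude, I would set $U := \bigcup_s V_s$, where $s$ ranges over all codimension $1$ points of $S$; this is an open subset containing every codimension $1$ point, so $U \in S_\Zar^{(1)}$. For any two codimension $1$ points $s, s'$, the morphisms $\tilde x_s$ and $\tilde x_{s'}$ restrict to $S$-morphisms $V_s \cap V_{s'} \to X$ that agree on the generic point $\Spec(K)$, which is dense in the integral scheme $V_s \cap V_{s'}$; since $X \to S$ is separated, the equalizer of the two restrictions is a closed subscheme meeting the generic point of a reduced irreducible scheme, hence equals all of $V_s \cap V_{s'}$, so the two morphisms coincide there. Gluing yields the desired $\tilde x \colon U \to X$ extending $x$. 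I do not anticipate any serious obstacle; every step is a routine application of spreading out (legitimate since $X/S$ is of finite presentation) and of separatedness.
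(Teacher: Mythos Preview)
Your proposal is correct and follows essentially the same approach as the paper: spread each $x_s$ out to an open neighbourhood $V_s$ of $s$, set $U=\bigcup_s V_s$, and glue using separatedness of $X/S$ together with density of the generic point. The only difference is cosmetic: you cite EGA~IV$_3$, 8.8.2 for the spreading-out step and spell out the equalizer argument for gluing, whereas the paper simply asserts the existence of $U_s$ and the injectivity of the restriction maps $X(U_s)\to X(U_s\cap U_t)\to X(K)$.
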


\begin{proof} Assume $x$ near $S$-integral. Then there exists for every codimension $1$ point $s\in S$ an open neighbourhood $U_s$ such that $x$ lies in the image
of $X(U_s)\to X(\OOO_{S,s})\to X(K)$. Let $x_s$ be an element of $X(U_s)$ mapping to $x$. As the maps $X(U_t)\rightarrow X(U_s\cap U_t)\leftarrow X(U_s)$ are injective it
follows that the $x_s$ glue to an element $\hat{x}\in X(U)$ that maps to $x$, where $U=\bigcup U_s$ ($s$ running over all codimension $1$ points of $s$). The other implication is 
trivial. 
\end{proof}

\begin{prop} \label{prop:voj:proper}  Let $S$ be a noetherian normal connected scheme and let $X, Y$ be $S$-varieties. 
Let $f: X\to Y$ be a proper $S$-morphism. Let $x\in X(K)$. If $f(x)\in Y(S)^{(1)}$, then $x\in X(S)^{(1)}$. 
In particular, if $X$ is proper over $S$, then $X(S)^{(1)}=X(K)$. 
\end{prop}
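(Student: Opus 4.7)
The plan is to reduce the proposition to a direct application of the valuative criterion of properness at each codimension one point of $S$.

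First I would fix a codimension one point $s\in S$ and exploit the hypothesis that $S$ is noetherian and normal: the local ring $\OOO_{S,s}$ is then a discrete valuation ring with fraction field $K$. Since $f(x)\in Y(S)^{(1)}$, by definition there exists a morphism $y_s\colon \Spec(\OOO_{S,s})\to Y$ that extends $f\circ x\colon \Spec(K)\to Y$ along the canonical morphism $\Spec(K)\to \Spec(\OOO_{S,s})$. This gives a commutative square
\[
\xymatrix{
\Spec(K) \ar[r]^-{x} \ar[d] & X \ar[d]^-{f} \\
\Spec(\OOO_{S,s}) \ar[r]^-{y_s} & Y,
}
\]
to which I would apply the valuative criterion of properness for $f$ (noting that $f$ is proper by hypothesis, and $X,Y$ are noetherian because they are of finite type over the noetherian scheme $S$). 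This produces a unique $\tilde x\colon \Spec(\OOO_{S,s})\to X$ making both triangles commute. In particular $\tilde x$ restricts to $x$ on the generic point, so $x$ lies in the image of $X(\Spec(\OOO_{S,s}))\to X(K)$. Since $s$ was an arbitrary codimension one point of $S$, this is exactly Definition \ref{defi:near} and therefore $x\in X(S)^{(1)}$.

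For the final assertion, assume $X$ is proper over $S$ and take an arbitrary $x\in X(K)$. I would apply the first part to the structure morphism $X\to S$ (which is proper) in the role of $f$, and to $Y=S$. Here the point to verify is that the structure morphism $\Spec(K)\to S$ (i.e.\ the inclusion of the generic point) automatically lies in $S(S)^{(1)}$: for every codimension one $s\in S$, the natural localization $\Spec(\OOO_{S,s})\to S$ provides the required extension. Hence $f(x)\in S(S)^{(1)}$, so the first part yields $x\in X(S)^{(1)}$, giving $X(S)^{(1)}=X(K)$.

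I do not expect a genuine obstacle here: the entire argument is the standard packaging of the valuative criterion of properness, the only input beyond that being normality of $S$ to ensure that $\OOO_{S,s}$ is a DVR at every codimension one $s$. The mildly subtle point is simply to check that the extension $y_s$ supplied by the hypothesis fits into the square above so that the criterion applies on the nose, but this is immediate from how $y_s$ was chosen to restrict to $f\circ x$ on $\Spec(K)$.
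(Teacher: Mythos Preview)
Your proposal is correct and follows essentially the same approach as the paper: fix a codimension one point $s\in S$, use normality to ensure $\OOO_{S,s}$ is a DVR, and apply the valuative criterion of properness to the resulting square to produce the required lift. The paper's proof is identical in structure, including the same commutative diagram and the same specialization to $Y=S$ for the final assertion.
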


\begin{proof} For every codimension $1$ point $s$ of $S$ the local ring $\OOO_{S,s}$ is a d.v.r., because $S$ is noetherian and normal. By assumption there exists
$y\in Y(\Spec(\OOO_{S,s}))$ such that the composition $$\Spec(K)\to \Spec(\OOO_{S,s})\buildrel y\over \longrightarrow Y$$ is $f(x)$. 
By the valuative criterion of properness there exists a (unique) $\alpha\in X(\Spec(\OOO_{S,s}))$ making the diagram
$$\xymatrix{
\Spec(K)\ar[r]^x\ar[d] & X\ar[d]^f\\
\Spec(\OOO_{S,s})\ar[r]^y\ar@{..>}[ur]^\alpha & Y
}$$
commutative, i.e. $x$ lies in the image of the map $X(\Spec(\OOO_{S,s}))\to X(K)$, as desired. If $X$ is proper over $S$, then we can apply the above with $Y=S$ to conclude that $X(S)^{(1)}=X(K)$. 
\end{proof} 

\begin{defi} \label{defi:HM} A scheme $S$ is said to be a {\bf Hermite-Minkowski scheme} if 
it satisfies the following three conditions.
\begin{enumerate}
\item[(a)] $S$ is  noetherian regular and connected, 
\item[(b)] the function field $R(S)$ has characteristic zero and 
\item[(c)]  for every dense open subscheme $S'$ of $S$
the \'etale fundamental group $\pi_1(S')$ is small, i.e. it has for every $n\in\Nn$ only finitely many open subgroups of index $n$. 
\end{enumerate}
\end{defi}

\begin{rema}\label{rema:hmar}
If $S$ satisfies (a) and (b) and is of finite type over $\Spec(\Zz)$, then $S$ is a Hermite-Minkowski scheme (cf. \cite[Thm. 2.8]{HH}).
\end{rema}

The following proposition yields further examples of Hermite-Minkowski schemes. 

\begin{prop} \label{prop:HM} Let $k$ be a field of characteristic zero and $S$ a smooth
connected $k$-variety. Assume that $\Gal(k)$ is topologically finitely generated (e.g. $k\in\{\Cc,\Rr,\Qq_p\}$). Then $S$ is a Hermite-Minkowski scheme. 
\end{prop}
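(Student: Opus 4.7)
The plan is to verify each of the three defining properties of a Hermite-Minkowski scheme. Conditions (a) and (b) are immediate: smoothness of $S/k$ yields that $S$ is noetherian and regular, and $\chara(k)=0$ gives $\chara(R(S))=0$. All the content therefore lies in (c).

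First I would reduce to the case where $S$ is geometrically connected over $k$. Since $S$ is smooth and connected it is integral; let $k'$ be the algebraic closure of $k$ in $R(S)$. Then $k'/k$ is finite, so $\Gal(k')$ is an open subgroup of $\Gal(k)$, is still topologically finitely generated, and $S$ becomes geometrically connected when viewed as a $k'$-scheme. The \'etale fundamental group is intrinsic to the scheme, so property (c) is unaffected by this replacement.

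Let $S'$ be a dense open subscheme of $S$; then $S'$ is a smooth geometrically connected $k'$-variety (non-empty opens of an integral scheme are integral). Fix a geometric generic point and apply the homotopy exact sequence of SGA~1, Exp.~IX, yielding
$$\pi_1(S'_{\overline{k'}}) \longrightarrow \pi_1(S') \longrightarrow \Gal(k') \longrightarrow 1.$$
Hence $\pi_1(S')$ fits in a short exact sequence whose quotient $\Gal(k')$ is topologically finitely generated and whose kernel is a quotient of $\pi_1(S'_{\overline{k'}})$. After fixing an embedding $\overline{k'}\hookrightarrow\Cc$, the Lefschetz principle combined with the Riemann existence / Grauert--Remmert comparison theorem (SGA~1, Exp.~XII) identifies $\pi_1(S'_{\overline{k'}})$ with the profinite completion of the topological fundamental group $\pi_1^{\mathrm{top}}(S'(\Cc))$, which is finitely generated (e.g.\ via a smooth projective compactification of $S'_\Cc$ with normal-crossings boundary). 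The kernel of $\pi_1(S')\to\Gal(k')$ is therefore topologically finitely generated as well.

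An extension of topologically finitely generated profinite groups is itself topologically finitely generated, and any topologically finitely generated profinite group admits only finitely many continuous homomorphisms into each finite group (each such homomorphism being determined by the images of a finite generating set), hence only finitely many open subgroups of any given index. This yields the smallness required in (c). The one nontrivial technical input is the topological finite generation of $\pi_1(S'_{\overline{k'}})$, which rests on the \'etale-topological comparison and on the standard result that the complement of a normal-crossings divisor in a smooth projective variety has finitely generated topological $\pi_1$.
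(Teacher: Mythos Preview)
Your argument is correct and follows essentially the same route as the paper: pass to the algebraic closure $k'$ of $k$ in the function field to get geometric connectedness, apply the homotopy exact sequence, and reduce finite generation of the geometric fundamental group to the complex-analytic case via the comparison theorem. One small technical point: the embedding $\overline{k'}\hookrightarrow\Cc$ need not exist when $k$ has large cardinality, so (as the paper does) you should first descend $S'$ to a finitely generated subfield $k''\subset k'$, embed $k''$ in $\Cc$, and use invariance of the geometric $\pi_1$ under extension of algebraically closed base fields to get $\pi_1(S'_{\overline{k'}})\cong\pi_1(S'_{\overline{k''}})\cong\pi_1(S'_\Cc)$.
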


\begin{proof} It is enough to prove that $\pi_1(U)$ is topologically finitely generated 
for every a dense open subscheme $U$ of $S$. 
Let $k'$ be the algebraic closure of $k$ in $R(U)$. As $U$ is normal, the structure morphism $U\to \Spec(k)$ factors through $\Spec(k')$ and $U$ is geometrically irreducible as $k'$-scheme. $\Gal(k')$ is 
topologically finitely generated because it is an open subgroup of $\Gal(k)$ 
(cf. \cite[Corollary 17.6.3]{FJ}). There is an exact sequence
$1\to \pi_1(U_{\ol{k'}})\to \pi_1(U)\to \Gal(k')\to 1$
(cf. \cite[Prop. 5.6.1]{Szamuely}). 
There exists a finitely generated subfield $k''$ of $k'$ such that $U$ is defined over $k''$ and an embedding $k''\to \Cc$. Then $\pi_1(U_{\ol{k'}})\cong \pi_1(U_{\ol{k''}})\cong \pi_1(U_\Cc)$. It is hence suffices to show that 
$\pi_1(U_\Cc)$ is topologically finitely generated. By \cite[Cor. XII.5.2]{SGA1} $\pi_1(U_\Cc)$ is the 
profinite completion of the topological fundamental group $\pi_1^{\mathrm{top}}(U(\Cc))$, and  $\pi_1^{\mathrm{top}}(U(\Cc))$ is known to be finitely generated by the existence of a good triangulation (cf. \cite{Loj}). Hence $\pi_1(U_\Cc)$ is topologically finitely generated, as desired.
\end{proof}

\begin{prop} \label{prop:hm}
Let $S$ be a Hermite-Minkowski scheme. Let $Y$ be a connected $S$-variety and $f: X\to Y$ a finite \'etale surjective $S$-morphism. 
There exists a finite extension $E/R(S)$ 
such that $Y(S)^{(1)}\subset f(X(E))\cap Y(R(S)).$
\end{prop}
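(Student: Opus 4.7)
The plan is to combine the Hermite--Minkowski smallness of $\pi_1(S)$ with Zariski--Nagata purity of the branch locus, so as to show that, as $y$ varies over $Y(S)^{(1)}$, the connected components of the pullbacks $X\times_Y U$ (for appropriate dense opens $U\subseteq S$) are drawn from a \emph{single finite list} of connected finite étale covers of $S$ that does not depend on $y$. Then $E$ can be taken as a common field containing the function fields of all the covers in that list.

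More precisely, set $n=\deg(f)$. By the Hermite--Minkowski assumption (Definition~\ref{defi:HM}), $\pi_1(S)$ has only finitely many open subgroups of index $\leq n$; equivalently, up to $S$-isomorphism there are finitely many connected finite étale covers $\widetilde W_1,\dots,\widetilde W_m$ of $S$ of degree $\leq n$. Each $\widetilde W_j$ is normal (since $S$ is regular and $\widetilde W_j\to S$ is étale) and connected, hence integral, with function field $L_j$ a finite extension of $K=R(S)$. Let $E$ be a finite extension of $K$ containing every $L_j$ (choose embeddings $L_j\hookrightarrow\overline K$ and take a compositum).

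Now fix $y\in Y(S)^{(1)}$. By Proposition~\ref{lemm:vojjav} there exist $U\in S_\Zar^{(1)}$ and a morphism $y_U\colon U\to Y$ representing $y$. The pullback $X_{y_U}:=X\times_Y U\to U$ is finite étale of degree $n$, so any connected component $X^{(0)}\subseteq X_{y_U}$ is integral and gives a connected finite étale cover of $U$ of some degree $d\leq n$, with function field $L:=R(X^{(0)})$ a degree $d$ extension of $K$. The main step is now Zariski--Nagata purity: since $U$ contains all codimension one points of the regular noetherian connected scheme $S$, the complement $S\setminus U$ has codimension $\geq 2$, so the pullback functor from finite étale covers of $S$ to finite étale covers of $U$ is an equivalence. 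Hence $X^{(0)}\to U$ extends to a connected finite étale cover $\widetilde W\to S$ of the same degree, necessarily $S$-isomorphic to some $\widetilde W_j$, and $L=L_j\hookrightarrow E$.

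Finally, a $K$-embedding $L\hookrightarrow E$ is the same as an $E$-point of $\Spec(L)$, and composing with the generic-point inclusion $\Spec(L)\hookrightarrow X^{(0)}\hookrightarrow X_{y_U}\to X$ produces $x\in X(E)$; by construction of $X_{y_U}$ as a fibre product, $f(x)=y$ in $Y(E)$, which yields $y\in f(X(E))\cap Y(R(S))$. The only nontrivial ingredient is the Zariski--Nagata purity step, which is precisely what allows one to pass from the variable open set $U$ back to the fixed base $S$ and thereby obtain a finite list of candidate covers independent of $y$; the rest is assembly of standard facts about Galois theory of the étale fundamental group.
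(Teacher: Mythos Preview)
Your proof is correct and follows essentially the same strategy as the paper's: both arguments combine the smallness of $\pi_1(S)$ with Zariski--Nagata purity to reduce the \'etale covers of the varying opens $U\in S_\Zar^{(1)}$ to a fixed finite list over $S$. The only difference is cosmetic: the paper builds a single connected \'etale cover $T/S$ dominating every cover of degree $\le n$ and sets $E=R(T)$, whereas you list the finitely many connected covers $\widetilde W_j/S$ of degree $\le n$, take $E$ as a compositum of the $R(\widetilde W_j)$, and then identify a connected component of $X_{y_U}$ with one of the $\widetilde W_j$.
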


\begin{proof} Let $n$ be the degree of the \'etale cover $X/Y$. 
As 
$\pi_1(S,\os)$ is small 
there exists a connected \'etale cover $T/S$ such that for every \'etale cover $W/S$ of degree $\le n$ the set 
$\Mor_S(T,W)$ is not empty. Let $E:=R(T)$. Let $y\in Y(S)^{(1)}$. Then there exists $S'\in S_\Zar^{(1)}$ such that 
$y\in Y(S')$. Let $C_S$ be the category of all finite \'etale covers of $S$ as a full subcategory of Schemes/$S$. Define $C_{S'}$ similarly. As $S$ is regular and $S'$ contains all codimension $1$ points of $S$ the functor
$$C_S\to C_{S'},\ W\mapsto W\times_S S'$$
is an equivalence of categories by the purity of the branch locus (cf. \cite[Corollaire X.3.3]{SGA1}). Thus $T':=T\times_S S'$ has the following property. For every \'etale cover
$W'/S'$ of degree $\le n$ the set 
$\Mor_{S'}(T',W')$ is not empty. Now $f^{-1}(y)\to S'$ is a (possibly disconnected) \'etale cover of degree $\le n$. Hence 
$\Mor_{S'}(T', f^{-1}(y))\neq \emptyset$. It follows that there exists 
$x\in X(T')$ such that $f(x)=y$. Now $x\in X(T')^{(1)}\subset X(E)$ and $f(x)=y$ as desired.
\end{proof}

\section{Classification of covers}\label{sec:ClassCov}
Throughout this section we work within the following setup.

\begin{setup}\label{setup}
Let $K$ be a field of characteristic zero.
Let $X, Y$ and $Z$  be normal connected $K$-varieties. Let $\xi$ be the generic point of $Z$.  
Let $g: X\to Y$ be a finite surjective $K$-morphism and $f: Y\to Z$ a normal $K$-morphism. Assume that the generic fibre 
of $f$ is geometrically connected. 
By Proposition \ref{prop:stein-np} there exists a non-empty open subset $Z_0$ of $Z$ such that $Y_z=f^{-1}(z)$ is geometrically connected over $k(z)$ and 
$X_z=g^{-1}f^{-1}(z)$ is normal for every $z\in Z_0$. We let $Z_0$ be the largest non-empty open subset of $Z$ with these properties. 
\end{setup}

For $z\in Z_0$ we often consider the
induced morphisms
$$X_z\buildrel g_z\over\longrightarrow Y_z \buildrel f_z\over\longrightarrow \Spec(k(z))$$
where $Y_z$ is a normal geometrically connected $k(z)$-variety and $X_z$ is a normal (possibly disconnected) $k(z)$-variety. 

\begin{defi} 
We call $g$ {\bf vertical over $f$} if $g_\xi: X_\xi\to Y_\xi$ is unramified. 
We call $g$ {\bf split-vertical over $f$} if for some  geometric generic point $\oxi$ of $Z$ the 
finite morphism $g_\oxi: X_\oxi\to Y_\oxi$ is \'etale and has a section. 
We call $g$ {\bf horizontal over $f$} if $g$ is not vertical over $f$. 
We call $g$ {\bf negligible} over $f$ if there exists a normal connected $K$-variety $Z''$, a ramified
finite surjective $K$-morphism $w: Z''\to Z$ and a $Z$-morphism $X\to Z''$. 
\end{defi}

\begin{rema} \label{rema:sv-v}    
 If $g$ is split-vertical over $f$, then $g$ is vertical over $f$. 
\end{rema}

\begin{lemm}   \label{lemm:vh:1}  
If $g$ is vertical over $Z$, then there exists a
dense open subscheme $V$ of $Z$ such that $g_V: X_V\to Y_V$ is \'etale. If $g$ is horizontal over $f$, then
there exists a dense open subscheme $V$ of $Z$ such that $g_z: X_z\to Y_z$ is ramified for all $z\in V$. 
\end{lemm}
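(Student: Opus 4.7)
The plan is to reduce both parts to one common ingredient: for a point $x\in X_z$ lying over $z\in Z$, the morphism $g$ is unramified at $x$ if and only if the fibre morphism $g_z$ is unramified at $x$. One direction is base change. For the other, I will apply Nakayama's lemma to the finitely generated $\OOO_{X,x}$-module $\Omega^1_{X/Y,x}$: since $X_z = X\times_Y Y_z$ and $\OOO_{X_z,x} = \OOO_{X,x}/\mathfrak{m}_z\OOO_{X,x}$, one has $\Omega^1_{X_z/Y_z,x} = \Omega^1_{X/Y,x}\otimes_{\OOO_{X,x}}\OOO_{X_z,x}$, and the containment $\mathfrak{m}_z\OOO_{X,x}\subseteq \mathfrak{m}_x$ (from $x$ lying over $z$) puts Nakayama in force. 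Consequently, if $R\subseteq X$ denotes the (closed) ramification locus of $g$, then $R\cap X_z$ is exactly the ramification locus of $g_z$, and in particular $g_z$ is ramified iff $R\cap X_z\neq\emptyset$ iff $z\in (f\circ g)(R)$.

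For the horizontal case this closes things quickly. Since $g_\xi$ is ramified, $\xi\in (f\circ g)(R)$. The morphism $f\circ g$ is of finite type, $R$ is closed, so Chevalley's theorem makes $(f\circ g)(R)$ constructible in $Z$; a constructible set containing the generic point contains a dense open $V\subseteq Z$, and for every $z\in V$ we have $R\cap X_z\neq\emptyset$, hence $g_z$ is ramified.

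For the vertical case I will first upgrade $g_\xi$ from unramified to \'etale. The generic fibre $Y_\xi$ is a normal integral $R(Z)$-variety (because $f$ is normal and has geometrically connected generic fibre, so the generic fibre is geometrically irreducible), and $X_\xi$, being a localisation of the normal scheme $X$, is normal with integral connected components. Applied to each component, Remark \ref{rema:basic} promotes the finite unramified morphism $g_\xi$ to an \'etale one. At any $x\in X_\xi$ the local rings coincide with the ambient ones, so being \'etale at $x$ is the same for $g$ and $g_\xi$; hence $X_\xi$ is contained in the open \'etale locus $E\subseteq X$ of $g$ (open because $g$ is of finite presentation). Then $X\setminus E$ is closed, $(f\circ g)(X\setminus E)$ is constructible in $Z$ (Chevalley) and avoids $\xi$, so $V := Z\setminus\overline{(f\circ g)(X\setminus E)}$ is a dense open subscheme over which $g_V\colon X_V\to Y_V$ is \'etale.

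The only subtle step is the Nakayama equivalence of unramifiedness for $g$ and $g_z$ at fibre points; after that, both assertions follow by the standard combination of openness of the \'etale locus and Chevalley's theorem on constructible images, with no real obstacle expected.
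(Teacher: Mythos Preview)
Your argument is correct and follows essentially the same line as the paper's proof. The paper packages the two ingredients as EGA citations---\cite[17.7.11]{EGAIV4} for the constructibility of $\{z\in Z: g_z\ \text{unramified}\}$ and \cite[17.8.1]{EGAIV4} for the fibrewise criterion---whereas you unpack both by hand via Nakayama on $\Omega^1_{X/Y}$ and Chevalley; the only cosmetic difference is that in the vertical case the paper first obtains $g_V$ unramified and then upgrades to \'etale using normality of $Y_V$, while you upgrade $g_\xi$ to \'etale first and then spread out the \'etale locus.
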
 

\begin{proof}  
The set $N=\{z\in Z: g_z: X_z\to Y_z\ \mbox{is unramified}\}$ is a  constructible subset $Z$ (cf. \cite[17.7.11]{EGAIV4}). Hence $Z\setminus N$ is also constructible. 
Assume $g$ vertical. 
Then $\xi\in N$. 
Hence there exists a dense open subset $V$ of $Z$ with $V\subset N$, and then $g_V$ is unramified
by \cite[17.8.1]{EGAIV4}. As $Y_V$ is normal, it follows that $g_V$ is \'etale (cf. Remark \ref{rema:basic}). 
Now assume $g$ horizontal. Then
$\xi\in Z\setminus N$. Hence there exists a dense open subset $V$ of $Z$ with $V\subset Z\setminus N$.
It follows that $g_z: X_z\to Y_z$ is ramified for all $z\in V$. 
\end{proof}

We let $u: Z'\to Z$ be the normalization of $Z$ in $X$ and $h: X\to Z'$ the canonical $Z$-morphism. 
Let $Y':=Y\times_Z Z'$ and let $u': Y'\to Y$ and $f': Y'\to Z'$ be the projections of the fibre product. Let $g': X\to Y'$ be the morphism that
makes the diagram

\begin{align}\label{diag:basic}
\xymatrix{
&&&X\ar[dlll]_g\ar@{..>}[dl]_{g'}\ar[ddl]^h & \\
Y\ar[d]_f && Y'\ar[d]_{f'}\ar[ll]^{u'}\\
Z && Z'\ar[ll]_u
}\end{align}
commute. 

\begin{rema} \label{rema:allnormal:1}
By Fact \ref{fact:relnorm} $Z'$ is a normal connected $K$-variety and $u$ a finite surjective morphism. Hence $u$ and $u'$ are finite and surjective. 
The morphism $f$ is normal and the generic fibre of $f$ is geometrically conneced as $R(Z)$-scheme. By base change also $f'$ is normal 
(cf. \cite[6.8.3]{EGAIV2}) and
the generic fibre of $f'$ is geometrically connected as $R(Z')$-scheme. 
By Proposition \ref{prop:stein-np}  
the generic fibre of $h: X\to Z'$ is normal and geometrically connected as $R(Z')$-scheme. 
\end{rema}


\begin{lemm} The $K$-variety $Y'$ is normal and connected and $g'$ is a finite surjective morphism. 
\end{lemm}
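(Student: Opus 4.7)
The plan is to assemble the four claims---normality and connectedness of $Y'$, and finiteness and surjectivity of $g'$---directly from the information already packaged in Remark~\ref{rema:allnormal:1}: the morphism $u$ (and hence its base change $u'$) is finite surjective, $f'$ is a normal morphism with geometrically connected generic fibre, and $Z'$ is a normal connected $K$-variety.

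First I would handle normality and connectedness of $Y'$. Since $f' \colon Y' \to Z'$ is the base change of the normal morphism $f$, it is again flat with geometrically normal fibres; combined with the normality of $Z'$, the standard preservation result (cf.\ \cite[6.5.4]{EGAIV2}) yields that $Y'$ is normal. For connectedness I will show that $Y'$ is in fact irreducible. Because $K$ has characteristic $0$, the normal, geometrically connected $R(Z)$-variety $Y_\xi$ is geometrically integral (cf.\ Remark~\ref{rema:basic}), so its base change $Y'_{\xi'} := Y_\xi \times_{R(Z)} R(Z')$ is integral. Since $f'$ is flat and $Z'$ is integral, going-down forces every generic point of $Y'$ to map to the generic point $\xi'$ of $Z'$, and hence to coincide with a generic point of $Y'_{\xi'}$. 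As the integral scheme $Y'_{\xi'}$ has a unique generic point, so does $Y'$; combined with normality, $Y'$ is integral and in particular connected.

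For finiteness of $g'$: since $u' \circ g' = g$ is finite (hence proper) and $u'$ is finite (hence separated), a standard cancellation property of proper morphisms gives that $g'$ is proper; moreover the fibres of $g'$ embed into those of $g$ and so are finite, and a proper morphism with finite fibres is finite. For surjectivity, $g'(X)$ is a closed subset of $Y'$ (since $g'$ is finite, hence closed) and irreducible (being the image of the irreducible scheme $X$). Endowed with its reduced induced scheme structure, the restriction $u'|_{g'(X)} \colon g'(X) \to Y$ is a finite morphism with image $u'(g'(X)) = g(X) = Y$, hence a finite surjective morphism between irreducible schemes; this gives $\dim g'(X) = \dim Y$. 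Since $u' \colon Y' \to Y$ is itself a finite surjective morphism between irreducible schemes, $\dim Y' = \dim Y$ as well. Thus $g'(X)$ is a closed irreducible subset of $Y'$ of the same dimension as $Y'$, forcing $g'(X) = Y'$.

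The main obstacle is the irreducibility step for $Y'$: it is where one must thread together the geometric integrality inherited from the two hypotheses on $f$ (normality of the morphism and geometric connectedness of its generic fibre) with the good behaviour of generic points under flat base change; all other steps are essentially formal consequences of the finiteness and separatedness properties supplied by Remark~\ref{rema:allnormal:1}.
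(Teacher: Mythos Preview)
Your proof is correct and follows essentially the same architecture as the paper's: normality from the normality of $f'$ and $Z'$, connectedness from properties of the generic fibre of $f'$, finiteness of $g'$ by cancellation through $u'\circ g'=g$, and surjectivity from $g'$ being closed with large image. The paper's execution is a bit more direct in two places: for connectedness it simply notes that $f'$ is flat of finite type, hence open, so a decomposition of $Y'$ into two nonempty clopen pieces would induce one of the connected generic fibre $Y'_{\xi'}$ (your going-down argument proves the stronger statement that $Y'$ is irreducible, which is fine but not needed); and for surjectivity it observes that the generic fibre of $u'$ consists of the single generic point of $Y'$, so $g'$ is dominant and, being closed, surjective---avoiding the dimension comparison.
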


\begin{proof} By Remark \ref{rema:allnormal:1} the morphism $f': Y'\to Z'$ is normal, and $Z'$ is a normal connected $K$-variety. 
Thus $Y'$ is normal by \cite[6.8.3]{EGAIV2}. The scheme $Z'$ is irreducible and the morphism $f': Y'\to Z'$ is flat and of finite type, hence open, and its generic fibre is 
(geometrically) connected. Hence $Y'$ is connected. 
Now $u'\circ g'=g$ and the morphisms $u'$ and $g$ are finite. This implies that $g'$ is  a finite
morphism. The generic fibre of $u'$ consists of the generic point of $Y'$ only and $g$ maps the generic point of 
$X$ to the generic point of $Y$. It follows that $g'$ is dominant. But $g'$, being finite, is a closed morphism. Thus $g'$ is surjective.
\end{proof}

\begin{lemm} \label{rema:ufet} There exists a non-empty open subscheme $V$ of $Z$ such that $u_V: Z'_V\to V$ is finite \'etale.  
\end{lemm}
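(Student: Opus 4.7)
The plan is to simply replay the generic-étaleness argument that already appeared in the proof of Proposition \ref{prop:stein-np}(a), since the setup here is identical: $u: Z' \to Z$ is a finite surjective morphism of normal connected $K$-varieties with $\chara(K)=0$.

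First I would recall from Remark \ref{rema:allnormal:1} that $Z'$ is a normal connected $K$-variety and that $u: Z' \to Z$ is finite and surjective. In particular $u$ is dominant, so it induces a finite field extension $R(Z')/R(Z)$. Because $\chara(R(Z))=0$, this extension is separable, hence the generic fibre $\Spec(R(Z'))$ of $u$ is étale over $\Spec(R(Z))$.

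Next I would apply the standard spreading-out result (EGA IV, 17.7.11), already invoked earlier in the paper, to conclude that there exists a non-empty open subscheme $V \subset Z$ such that $u_V: Z'_V \to V$ is unramified. Being the base change of the finite morphism $u$, the morphism $u_V$ is also finite. Since $Z$ is normal (in particular unibranch) and $Z'_V$ is integral, the recollection in Remark \ref{rema:basic} (citing \cite[18.10.1]{EGAIV4}) tells us that a finite unramified morphism from an integral scheme to a unibranch integral scheme is automatically étale; thus $u_V$ is finite étale, which is the desired conclusion.

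There is no real obstacle: the only subtlety is making sure that normality and characteristic zero together upgrade ``generically unramified'' to ``étale on a dense open''. This is guaranteed by the setup, so the lemma follows immediately.
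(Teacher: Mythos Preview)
Your proposal is correct and follows essentially the same approach as the paper. The paper's proof observes that $u$ is vertical over $\mathrm{Id}_Z$ and then invokes Lemma~\ref{lemm:vh:1}, whereas you inline the content of that lemma (generic \'etaleness via $\chara 0$, spreading out by \cite[17.7.11]{EGAIV4}, and upgrading unramified to \'etale via normality); the underlying argument is identical.
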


\begin{proof}
The morphism $u: Z'\to Z$ is a finite morphism of normal connected $K$-varieties. Hence $Z'_\xi=\Spec(R(Z'))$. The field extension $R(Z')/R(Z)$ is separable because $\chara(K)=0$. It follows that $u_\xi: Z'_\xi\to \Spec(R(Z))$ is finite \'etale, i.e. $u$ is vertical over $\mathrm{Id}_Z$. By Lemma 
\ref{lemm:vh:1} the assertion follows. 
\end{proof}

\begin{lemm} \label{lemm:horerb} If $g$ is horizontal (resp. vertical) over $f$, then $g'$ is horizontal (resp. vertical) over $f'$.
\end{lemm}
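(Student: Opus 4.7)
The plan is to translate the vertical/horizontal dichotomy into an \'etaleness statement on generic fibres and then to invoke cancellation and composition for \'etale morphisms; the pivot is that in characteristic zero the map $u'$ restricted to generic fibres over $Z$ is finite \'etale, because it is the base change of the finite separable extension $R(Z')/R(Z)$.

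First I would pin down the relevant generic fibres. Write $\xi'$ for the generic point of $Z'$; since $u: Z'\to Z$ is finite and dominant between integral schemes and $\chara(K)=0$, the morphism $u_\xi: Z'_\xi\to\Spec(R(Z))$ equals the finite \'etale morphism $\Spec(R(Z'))\to\Spec(R(Z))$, and $u(\xi')=\xi$. Base-changing along $Y_\xi\to\Spec(R(Z))$ shows that $u'_\xi: Y'_{\xi'}\to Y_\xi$ is also finite \'etale. Because $g: X\to Y\to Z$ factors through $h: X\to Z'$, the identification $\Spec(R(Z))\times_Z Z'=\Spec(R(Z'))$ yields $X_\xi=X\times_Z\Spec(R(Z))=X\times_{Z'}\Spec(R(Z'))=X_{\xi'}$. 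The commutativity $u'\circ g'=g$ read off from diagram \eqref{diag:basic} then gives, on generic fibres over $\xi$,
\begin{equation*}
g_\xi \;=\; u'_\xi\circ g'_{\xi'}: X_{\xi'}\longrightarrow Y'_{\xi'}\longrightarrow Y_\xi.
\end{equation*}

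Next I would note that both $Y_\xi$ and $Y'_{\xi'}$ are normal (by Remark \ref{rema:allnormal:1}, since $f$ and $f'$ are normal morphisms), and in characteristic zero a finite unramified morphism onto a normal target is \'etale (Remark \ref{rema:basic}). Hence $g$ is vertical over $f$ if and only if $g_\xi$ is \'etale, and $g'$ is vertical over $f'$ if and only if $g'_{\xi'}$ is \'etale.

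Finally I would apply the standard fact that in a factorization $w$ followed by an \'etale morphism $v$, the composition $v\circ w$ is \'etale if and only if $w$ is \'etale (composition in one direction; \'etale cancellation in the other). Applied to $v=u'_\xi$ and $w=g'_{\xi'}$, this shows $g_\xi$ is \'etale if and only if $g'_{\xi'}$ is \'etale, which proves the equivalence in the vertical case; the horizontal case is obtained by taking complements. No step is a real obstacle: the whole argument is a fibrewise unpacking of the definitions combined with the finite \'etaleness of $u'_\xi$ coming from characteristic zero.
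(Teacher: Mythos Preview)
Your proof is correct and follows essentially the same route as the paper: both reduce to the factorization $g_\xi = u'_\xi \circ g'_{\xi'}$ with $u'_\xi$ finite \'etale (coming from the separability of $R(Z')/R(Z)$ in characteristic zero, which the paper records as Lemma \ref{rema:ufet}), and then use that ramification of the composite is equivalent to ramification of $g'_{\xi'}$. You invoke \'etale composition and cancellation directly, whereas the paper quotes the packaged statement from Remark \ref{rema:basic} that for finite surjective morphisms of normal connected varieties $v\circ u$ is unramified iff both $u$ and $v$ are; these are the same mechanism. Your extra care in distinguishing $\xi$ from $\xi'$ and checking $X_\xi = X_{\xi'}$, $Y'_\xi = Y'_{\xi'}$ is a point the paper silently absorbs into its notation.
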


\begin{proof} By Lemma \ref{rema:ufet} the morphism $u_\xi$ is finite \'etale. Hence $u'_\xi$ is finite \'etale (base change). The $R(Z)$-varieties $Y_\xi, Y'_\xi$ and $X_\xi$ are normal and connected (cf. Remark \ref{rema:allnormal:1} and \ref{rema:basic}) and $g_\xi=u'_\xi\circ g'_\xi$. Thus $g_\xi$ is ramified if and only if $g'_\xi$ is ramified. 
The assertion is immediate from that.   
\end{proof}

\begin{lemm} The morphism $g$ is negligible over $f$ if and only if $u$ is ramified. 
\end{lemm}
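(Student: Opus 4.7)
The plan is to prove both implications, one trivial and one via the universal property of the relative normalization.

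First I would dispatch the easy direction. If $u: Z' \to Z$ is ramified, then taking $Z'' := Z'$, $w := u$, and invoking the canonical morphism $h: X \to Z'$ from diagram \eqref{diag:basic}, we see immediately that $g$ is negligible over $f$: indeed $Z'$ is a normal connected $K$-variety by Remark \ref{rema:allnormal:1}, $u$ is a finite surjective ramified morphism by assumption, and $h$ is a $Z$-morphism.

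For the converse, suppose $g$ is negligible over $f$, i.e. there exist a normal connected $K$-variety $Z''$, a ramified finite surjective $K$-morphism $w: Z'' \to Z$, and a $Z$-morphism $\phi: X \to Z''$. The key tool is Fact \ref{fact:relnorm}(c), the universal property of the relative normalization: since $Z'' \to Z$ is finite, the map $\Mor_Z(Z', Z'') \to \Mor_Z(X, Z'')$, $\alpha \mapsto \alpha \circ h$, is bijective. Hence there exists a unique $Z$-morphism $\psi: Z' \to Z''$ with $\phi = \psi \circ h$. Because $Z'$ carries a unique $Z$-scheme structure via $u$, comparing the two $Z$-morphisms $u$ and $w \circ \psi$ from $Z'$ to $Z$ forces $u = w \circ \psi$.

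Next I would verify that $\psi$ is itself a finite surjective morphism between normal connected $K$-varieties, so that Remark \ref{rema:basic} on composites applies. The morphism $\psi$ is finite because $w \circ \psi = u$ is finite and $w$ is separated (hence $\psi$ is finite by the cancellation property for finite morphisms). For surjectivity, track generic points: $h$ is dominant by Fact \ref{fact:relnorm}(b), so $h(\eta_X) = \eta_{Z'}$ since $Z'$ is integral; consequently $\psi(\eta_{Z'}) = \phi(\eta_X)$, and because $w \circ \phi = f \circ g$ is dominant (as $f$ is dominant with geometrically connected generic fibre and $g$ is surjective) while $Z''$ is integral, $\phi(\eta_X)$ must be the generic point of $Z''$. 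Thus $\psi$ is dominant, hence surjective since it is finite.

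Finally I would conclude with Remark \ref{rema:basic}: for finite surjective morphisms of normal connected $K$-varieties, the composite $u = w \circ \psi$ is unramified if and only if both factors are unramified. Since $w$ is ramified by hypothesis, $u$ must be ramified as well. I do not anticipate a serious obstacle here; the only slightly delicate point is the surjectivity of $\psi$, which is handled by the generic-point chase above.
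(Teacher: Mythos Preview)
Your proof is correct and follows essentially the same approach as the paper: both directions are handled identically, with the nontrivial direction using Fact \ref{fact:relnorm}(c) to factor through $Z'$, then deducing that the induced map $Z'\to Z''$ is finite surjective so that ramification of $w$ forces ramification of $u=w\circ\psi$. Your justification of the finiteness and surjectivity of $\psi$ is more detailed than the paper's (which simply asserts them from $w\circ\psi=u$ with $u,w$ finite surjective), but the argument is the same.
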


\begin{proof} Assume $g$ negligible over $f$. Then there exists a normal connected $K$-variety $Z''$, a finite surjective ramified morphism 
$w: Z''\to Z$ and a $Z$-morphism $\alpha: X\to Z''$. By Fact \ref{fact:relnorm} there exists a unique $Z$-morphism $v: Z'\to Z''$ such that $\alpha=v\circ h$. Now $v\circ w=u$ und $u$ and $w$ are finite and surjective. Hence $v$ is finite and surjective. As $w$ is ramified it follows that $u$ must be ramified, too. The reverse implication is trivial. 
\end{proof}

\begin{rema} \label{rema:get} If $g$ is \'etale, then the morphisms $u'$, $u$ and $g'$ are \'etale and $h$ normal. 
\end{rema}

\begin{defi}  
We call $g$ {\bf induced over $f$} if $g'$ is an isomorphism.
\end{defi}

\begin{rema}\label{rema:in-an} If $g$ is induced over $f$ and ramified, then $u'$ and hence also $u$ is ramified and thus $g$ negligible over $f$. 
\end{rema}

\begin{prop}  \label{prop:split-vertical} 
$g$ is split-vertical  over $f$ if and only if  $g$ is induced over $f$. 
\end{prop}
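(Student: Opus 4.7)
The plan is to prove both directions by passing to the geometric generic fibre over $Z$ and analysing the connected components of $X_{\oxi}$ via the morphism $h:X\to Z'$. The easy direction $g$ induced $\Rightarrow$ $g$ split-vertical is essentially formal: by Lemma~\ref{rema:ufet} the morphism $u$ is finite étale at the generic point of $Z$, so $u_{\oxi}:Z'_{\oxi}\to\Spec(\overline{R(Z)})$ is a trivial cover of $\deg(u)$ copies of the base. If $g'$ is an isomorphism then $g$ identifies with the base change $u':Y\times_Z Z'\to Y$, and $g_{\oxi}$ becomes the trivial étale cover $\coprod_{\zeta\in Z'_{\oxi}}Y_{\oxi}\to Y_{\oxi}$, which plainly admits sections.

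For the converse, suppose $g_{\oxi}$ is étale with a section $s:Y_{\oxi}\to X_{\oxi}$. First I would decompose $X_{\oxi}$ into its connected components. Writing $X_{\oxi}=X\times_{Z'}Z'_{\oxi}=\coprod_{\zeta\in Z'_{\oxi}}X_\zeta$, Remark~\ref{rema:allnormal:1} tells me that the generic fibre of $h$ is geometrically connected over $R(Z')$, so each $X_\zeta$ is connected and the $X_\zeta$ are precisely the $\deg(u)$ components of $X_{\oxi}$. Since $Y_{\oxi}$ is connected and a section of an étale morphism is an open-closed immersion, $s$ embeds $Y_{\oxi}$ isomorphically as one of these components, say $X_{\zeta_0}$, so $g_{\oxi}|_{X_{\zeta_0}}$ is an isomorphism.

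Next I would propagate this to every component via a Galois-transitivity argument. Because $X$ is integral and $X\to Z$ is dominant, the generic fibre $X_\xi$ is integral, in particular connected, so $\Gal(\overline{R(Z)}/R(Z))$ acts transitively on $\pi_0(X_{\oxi})=\{X_\zeta\}$. Galois conjugation is compatible with $g_{\oxi}$ and therefore preserves the degrees of the restrictions $g_{\oxi}|_{X_\zeta}$, so all these degrees equal one; each restriction is then a finite étale cover of degree one, hence an isomorphism. Consequently $g'_{\oxi}:X_{\oxi}\to Y'_{\oxi}=\coprod_\zeta Y_{\oxi}\times\{\zeta\}$ is an isomorphism on every component and thus an isomorphism. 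Faithfully flat descent along $\oxi\to\xi$ then yields that $g'_\xi$ is an isomorphism, forcing $R(X)=R(Y')$; as $g'$ is finite with normal target $Y'$, a finite birational morphism to a normal scheme is an isomorphism, so $g'$ is an isomorphism.

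The step I expect to be the main obstacle is the Galois-transitivity argument, which hinges on the connectedness of the generic fibre $X_\xi$. This is the one non-formal geometric input and follows from $X$ being integral together with $X\to Z$ being dominant; without it there would be no reason why the non-section components of $X_{\oxi}$ should also have degree one over $Y_{\oxi}$.
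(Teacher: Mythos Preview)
Your proof is correct. The easy direction matches the paper's argument exactly. For the converse, however, you take a genuinely different route.

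The paper, after locating a single component $D_j$ of $X_{\oxi}$ on which $g'_{\oxi}$ restricts to an isomorphism onto the corresponding component $C_j$ of $Y'_{\oxi}$, does \emph{not} try to say anything about the other components. Instead it invokes Lemma~\ref{lemm:vh:1} to find a dense open $V\subset Z$ over which $g'_V$ is finite \'etale, and then uses constancy of the degree of a finite \'etale cover over a connected base: since $D_j\to C_j$ is a base change of $g'_V$ and has degree $1$, the degree of $g'_V$ is $1$, whence $[R(X):R(Y')]=1$.

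Your argument bypasses this spreading-out step entirely by working only at the geometric generic fibre: the connectedness of $X_\xi$ (which follows from $X$ being integral and dominant over $Z$) forces $\Gal(\overline{R(Z)}/R(Z))$ to act transitively on $\pi_0(X_{\oxi})$, and Galois-equivariance of $g_{\oxi}$ then transports the degree-one property from the distinguished component to all of them. This is a clean symmetry argument that the paper does not use. Both approaches then finish identically via ``finite birational onto normal is an isomorphism''. Your self-identified ``main obstacle'' is indeed the crux, and your justification for it (integrality of $X_\xi$ plus Galois descent of clopen subsets) is sound; note that this is in fact equivalent information to what the paper extracts from Remark~\ref{rema:allnormal:1} about the generic fibre of $h:X\to Z'$ being geometrically connected over $R(Z')$, since $R(Z')$ is precisely the algebraic closure of $R(Z)$ in $R(X)$.
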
 

\begin{proof} 
Let $\oxi: \Spec(\Omega)\to Z$ be a geometric generic point of $Z$. 
It follows from Lemma \ref{rema:ufet} that 
$u_\oxi: Z'_\oxi\to \Spec(\Omega)$
is finite \'etale, and $\Omega$ is an algebraically closed field. Hence $Z'_\oxi$ splits up as 
$Z'_\oxi=\coprod_{j=1}^s \oxi_j'$ where each $\oxi'_j$ is just a copy of $\Spec(\Omega)$ and $u_\oxi|\oxi'_j: \oxi'_j\to \Spec(\Omega)$ is an isomorphism. 
Let $C_j=f_\oxi'^{-1}(\oxi'_j)$ and $D_j=g_\oxi'^{-1}(C_j)$ and consider the diagram

$$\xymatrix{
&&&X_\oxi\ar[dlll]_{g_\oxi}\ar[dl]^{g'_\oxi} \ar@{=}[r]&\coprod_{j=1}^{s} D_j & D_j\ar[l]\ar[dl]\\ 
Y_\oxi\ar[d]_{f_\oxi} && Y'_\oxi \ar[d]_{f'_\oxi}\ar[ll]^{u'_\oxi} \ar@{=}[r]& \coprod_{j=1}^{s} C_j & C_j\ar[l]\ar[d]\\
\Spec(\Omega) && Z'_\oxi\ar[ll]_{u_\oxi}\ar@{=}[r]& \coprod_{j=1}^{s} \oxi'_j & \oxi_j\ar[l]
}$$
in which all three squares are cartesian. It follows that 
$u'_\oxi|C_j: C_j\to Y_\oxi$ is an isomorphism for all $j$ (*). As $Y_\oxi$ is connected, it follows that the $C_j$ are connected.  
The generic fibre of $h$ is geometrically connected, $h=f'\circ g'$ and the $\oxi'_j$ lie over the generic point of $Z'$. 
Hence $D_j=h^{-1}(\oxi'_j)$ is connected. It follows that the $D_j$ (resp. $C_j$) are precisely the connected components of 
$X_\oxi$ (resp. $Y'_\oxi$).

$\Leftarrow$. 
If $g$ is induced over $f$, then $g'$ is an isomorphism, hence $g'_\oxi: X_\oxi\to Y'_\oxi$ is an isomorphism, and from (*) we conclude that 
 $g_\oxi$ must be finite \'etale and have a section, i.e. $g$ is split-vertical over $f$. 

$\Rightarrow$. For the proof of the other implication assume that $g$ is split-vertical over $f$. 
By Remark \ref{rema:sv-v} and Lemma   \ref{lemm:vh:1}  there exists a dense open subscheme $V$ of $Z_0$ such that
$g_V: X_V\to Y_V$ is \'etale. Hence also $g_V': X_V\to Y_V'$ is \'etale (**). 
As the morphism $g_\oxi$ is finite \'etale and has a section 
there exists $j$ such that $g_\oxi|D_j: D_j\to Y_\oxi$ is an isomorphism, and from now on we choose $j$ in that way. 
Via (*) we see that 
$g_\oxi'|D_j: D_j\to C_j$ is an isomorphism (***). All squares in the diagram
$$\xymatrix{
 D_j\ar[r]\ar[d] & X_\oxi\ar[r]\ar[d]^{g'_\oxi} & X_V\ar[r] \ar[d]^{g'_V}& X\ar[d]^{g'}\\
 C_j\ar[r] & Y'_\oxi \ar[r]& Y'_V\ar[r] &Y'
}$$
are cartesian and $g'_V$ is a finite \'etale (see (**)) morphism of normal integral $K$-varieties. Hence $D_j\to C_j$ is finite \'etale of the same degree as $g'_V$. Together with (***) this implies $[R(X):R(Y')]=1$. 
We know that $g': X\to Y'$ is a finite surjective morphism of normal integral $K$-varieties, hence $X$ is the normalization of $Y'$ in $R(X)$. 
Together with $[R(X):R(Y')]=1$ this implies that $g': X\to Y'$ is an isomorphism.
Hence $g$ is induced over $f$, as desired. 
\end{proof}

The above proof of Proposition \ref{prop:split-vertical} is inspired by the proof of  \cite[Thm. X.1.3, p. 261 ff]{SGA1}. 
Note that \cite[Thm. X.1.3, p. 261 ff]{SGA1} could not be applied directly, however, because in the situation of Proposition \ref{prop:split-vertical} the morphism $f$ is neither assumed proper nor it is assumed that {\em all} its fibres are geometrically connected (nor $g$ is assumed \'etale). 

\begin{prop}  \label{prop:horizontal} 
If $g$ is horizontal over $f$, then there exists a non-empty open subscheme $V$ of $Z_0$ such that 
for every $z\in V$ the set $g_z(X_z(k(z)))$ is strongly thin in $Y_z$. 
\end{prop}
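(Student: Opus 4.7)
The plan is to decompose each fibre $X_z$ into its connected components using the relative normalization $u\colon Z'\to Z$ of $Z$ in $X$ from diagram \eqref{diag:basic}, and then show that for $z$ in some dense open of $Z_0$ every component maps to $Y_z$ by a finite surjective ramified morphism; strong thinness of $g_z(X_z(k(z)))$ will then follow directly from the definition.

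First I would invoke Lemma \ref{lemm:horerb} to promote horizontality of $g$ over $f$ to horizontality of $g'\colon X\to Y'$ over $f'\colon Y'\to Z'$. Lemma \ref{lemm:vh:1} applied to $g',f'$ then supplies a dense open $V_1'\subset Z'$ on which $g'_{z'}\colon X_{z'}\to Y_{z'}$ is ramified. Combining Lemma \ref{rema:ufet} (a dense open $V_0\subset Z$ on which $u$ is finite \'etale) with Proposition \ref{prop:stein-np} applied separately to $h\colon X\to Z'$ and to $f'\colon Y'\to Z'$ (whose generic fibres are geometrically connected by Remark \ref{rema:allnormal:1}), I obtain a dense open $V_2'\subset Z'$ over which the fibres $X_{z'}$ and $Y_{z'}$ are normal and geometrically connected as $k(z')$-varieties. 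Since $u$ is finite (hence closed) and $V_1'\cap V_2'$ contains the generic point of $Z'$, the set $V := (V_0\cap Z_0)\setminus u(Z'\setminus (V_1'\cap V_2'))$ is a dense open subset of $Z_0$.

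For $z\in V$, \'etaleness of $u$ at every $z'\in u^{-1}(z)$ gives a decomposition $X_z=\bigsqcup_{z'\in u^{-1}(z)} X_{z'}$, with each $X_{z'}$ a normal connected $k(z')$-variety, and hence (via restriction of scalars) a normal connected $k(z)$-variety. The restriction $g_z|_{X_{z'}}\colon X_{z'}\to Y_z$ factors as $X_{z'}\xrightarrow{g'_{z'}} Y_{z'}\to Y_z$, where $g'_{z'}$ is finite, surjective, and ramified, and the second map is \'etale (as the base change of the finite separable extension $k(z')/k(z)$); thus the composition is finite, surjective, and ramified. Consequently $g_z(X_z(k(z)))=\bigcup_{z'\in u^{-1}(z)} g_z|_{X_{z'}}(X_{z'}(k(z)))$ exhibits $g_z(X_z(k(z)))$ as strongly thin in $Y_z$, as required.

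The main obstacle I anticipate is ensuring that \emph{every} connected component of $X_z$ (not just one of them) yields a ramified map to $Y_z$: a priori the decomposition could split off an \'etale surjective component, which would ruin strong thinness. The relative normalization $Z'$ is the key device here: via Lemma \ref{lemm:horerb} horizontality of $g$ translates into uniform ramification of $g'_{z'}$ on every fibre of $f'$ over a dense open of $Z'$, and this ramification is then preserved under the \'etale post-composition $Y_{z'}\to Y_z$.
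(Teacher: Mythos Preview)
Your proof is correct and follows essentially the same route as the paper: both arguments pass to the relative normalization $Z'$, invoke Lemma~\ref{lemm:horerb} and Lemma~\ref{lemm:vh:1} to get ramification of $g'_{z'}$ over a dense open of $Z'$, shrink so that $u$ is \'etale and the fibres of $h$ are connected, and then decompose $X_z$ along the points of $u^{-1}(z)$ to exhibit each component as a ramified finite cover of $Y_z$. The only cosmetic difference is that the paper obtains connectedness of $Y_{z'}$ directly from $z\in Z_0$ (since $Y_{z'}\cong Y_z\times_{k(z)}k(z')$ and $Y_z$ is geometrically connected) rather than via a separate appeal to Proposition~\ref{prop:stein-np} for $f'$.
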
 

\begin{proof} Lemma \ref{lemm:horerb} implies that $g'$ is horizontal over $f'$. 
By Lemma \ref{lemm:vh:1} there exists a non-empty open subscheme $V'$ of $Z'$ such that $g'^{-1}f'^{-1}(z')\to f'^{-1}(z')$ is ramified for every point $z'\in V'$ (*). After replacing $V'$ by a smaller non-empty open subscheme we can assume that all fibres of $h^{-1}(V')\to V'$ are geometrically connected. 
As $u$ is finite there exists a non-empty open subscheme $V$ of $Z_0$ such that $u^{-1}(V)\subset V'$. By Lemma \ref{rema:ufet} we can, after replacing $V$ by a one of its non-empty open subschemes, assume that $u_V: Z'_V\to V$ is finite \'etale. 

Let $z\in V$. Then $u_z: Z'_z\to \Spec(k(z))$ is finite \'etale. 
Hence $Z'_z$ splits up as 
$Z'_z=\coprod_{j=1}^s z'_j$ where $z'_j=\Spec(E_j)$ with a finite (separable) extension $E_j/k(z)$ for each $j$. 
Let $C_j=f_z'^{-1}(z'_j)$ and $D_j=g_z'^{-1}(C_j)$ and consider the diagram

$$\xymatrix{
&&&X_z\ar[dlll]_{g_z}\ar[dl]^{g'_z} \ar@{=}[r]&\coprod_{j=1}^{s} D_j & D_j\ar[l]\ar[dl]\\ 
Y_z\ar[d]_{f_z} && Y'_z \ar[d]_{f'_z}\ar[ll]^{u'_z} \ar@{=}[r]& \coprod_{j=1}^{s} C_j & C_j\ar[l]\ar[d]\\
\Spec(k(z)) && Z'_z\ar[ll]_{u_z}\ar@{=}[r]& \coprod_{j=1}^{s} z'_j & z'_j\ar[l]
}$$
in which all three squares are cartesian. It follows that $C_j=Y_z\times_{k(z)} \Spec(E_j)$ and that 
$u'_z|C_j: C_j= Y_z\times_{k(z)} \Spec(E_j)\to Y_z$ is just the projection of the fibre product. In particular $u'_z|C_j$ is finite \'etale for every 
$j$ (**). As $Y_z$ is a geometrically connected variety it follows that the $C_j$ are connected. 
The points $z_j'$ are localized in $V'$ by construction. Thus $D_j=h^{-1}(z')$ is connected. 
Hence the $D_j$ (resp. $C_j$) are precisely the connected components of 
$X_z$ (resp. $Y'_z$). By (*) the morphism $D_j\to C_j$ is ramified for every $j$. By (**) it follows that $g_z|D_j: D_j\to Y_z$ is ramified for every $j$. Hence $g_z(X_z(k(z)))=\bigcup_j g_z(D_j((k(z)))$ is strongly thin in $Y_z$. 
\end{proof}

\section{The extension property (EP)}
In Section \ref{sec:splitver} we shall study, in the situation of Setup \ref{setup},  the structure of vertical covers of $f$ under the 
assumption that $f$ satisfies the following technical condition (EP). 

\begin{defi} \label{defi:H} Let $Y, Z$ be connected normal varieties over a field $K$ of characteristic zero. Let $f: Y\to Z$ be a
$K$-morphism. We say that $f$ {\bf satisfies condition (EP)} 
 if for every geometric generic point $\oxi$ of $Z$ and every $\oa\in Y_\oxi(k(\oxi))$ the fibre $Y_\oxi=
Y\times_Z \Spec(k(\oxi))$ is connected and the homomorphism
$i_*: \pi_1(Y_\oxi, \oa)\to \pi_1(Y, i(\oa))$ (induced by the canonical morphism $i: Y_\oxi\to Y$) is injective. 
\end{defi}

The following remark explains that condition (EP) is in fact an extension property for connected \'etale covers of the geometric generic fibre. 

\begin{rema} Let $Y, Z$ be connected normal varieties over a field $K$ of characteristic zero. Let $f: Y\to Z$ be a
$K$-morphism. Assume that the generic fibre of $f$ is geometrically connected. By \cite[Corollary 5.5.8]{Szamuely} the following are equivalent.
\begin{enumerate}
\item[(a)] The morphism $f$ satisfies condition (EP). 
\item[(b)] For every geometric generic point $\oxi$ of $Z$ and every connected \'etale cover $W\to Y_\oxi$ there exists a connected \'etale cover $Y'\to Y$, a connected component $C$ of $Y'\times_Y Y_\oxi$ and a $Y_\oxi$-morphism $C\to W$. 
\end{enumerate}
\end{rema}

Condition (EP) can be checked over a ``big'' open subset of $Z$.

\begin{lemm} \label{lemm:ht-loc} Let  $K$ be a field of characteristic zero. Let $Y$ and $Z$ be connected smooth $K$-varieties and $f: Y\to Z$ be a
a smooth $K$-morphism. Let $D$ be a non-empty open subscheme of $Z$ that contains all codimension $1$ points of $Z$. If $f_D: Y_D\to D$ satisfies condition (EP), then $f: Y\to Z$ satisfies condition (EP). 
\end{lemm}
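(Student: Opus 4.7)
The plan is to exploit two facts: (i) the geometric generic fibre of $f$ equals the geometric generic fibre of $f_D$, since $\bar\xi$ factors through the generic point of $Z$, which lies in $D$; and (ii) under the smoothness assumptions the inclusion $Y_D \hookrightarrow Y$ induces an isomorphism $\pi_1(Y_D) \to \pi_1(Y)$ via Zariski--Nagata purity. Together these reduce the injectivity assertion for $f$ to the one already assumed for $f_D$.

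In more detail, first I would check that $Y \setminus Y_D = f^{-1}(Z \setminus D)$ has codimension at least $2$ in $Y$. Since $D$ contains every codimension $1$ point of $Z$, every irreducible component $W$ of $Z \setminus D$ satisfies $\dim W \le \dim Z - 2$. Because $f$ is smooth (so in particular flat of some relative dimension $d$ locally on $Y$), for each irreducible component $W' \subset f^{-1}(W)$ we have $\dim W' = \dim W + d \le \dim Y - 2$. Hence $Y \setminus Y_D$ has codimension at least $2$ in $Y$. As $K$ has characteristic zero and both $Y$ and $Z$ are smooth, $Y$ is regular, so Zariski--Nagata purity (\cite[Corollaire X.3.3]{SGA1}) guarantees that the restriction functor from the category of finite étale covers of $Y$ to that of $Y_D$ is an equivalence. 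In particular, the canonical homomorphism
\begin{equation*}
j_*: \pi_1(Y_D, i(\oa)) \longrightarrow \pi_1(Y, i(\oa))
\end{equation*}
is an isomorphism for any base point (here $j: Y_D \hookrightarrow Y$ is the inclusion).

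Next I would fix a geometric generic point $\oxi$ of $Z$ and note that, since $D$ is dense open in $Z$, the point $\oxi$ is equally a geometric generic point of $D$. Consequently the geometric generic fibres coincide: $Y_\oxi = (Y_D)_\oxi$. Let $\oa \in Y_\oxi(k(\oxi))$, let $i: Y_\oxi \to Y$ denote the canonical morphism, and let $i_D: Y_\oxi \to Y_D$ be its factorization through $Y_D$, so that $i = j \circ i_D$. Connectedness of $Y_\oxi$ is immediate from the hypothesis that $f_D$ satisfies (EP). The functoriality of $\pi_1$ gives the commutative diagram
\begin{equation*}
\xymatrix{
\pi_1(Y_\oxi, \oa) \ar[r]^{(i_D)_*} \ar[dr]_{i_*} & \pi_1(Y_D, i_D(\oa)) \ar[d]^{j_*}_\cong \\
 & \pi_1(Y, i(\oa))
}
\end{equation*}
where the vertical arrow is the purity isomorphism described above and, by hypothesis on $f_D$, the horizontal arrow $(i_D)_*$ is injective. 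Therefore $i_* = j_* \circ (i_D)_*$ is injective, which is condition (EP) for $f$.

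The only delicate point is step (ii)—verifying that the complement of $Y_D$ in $Y$ really has codimension at least $2$, which requires the relative dimension (flatness) input provided by smoothness of $f$; without smoothness one could only control the complement in $Z$. Everything else is a diagram chase, so I do not expect substantial difficulty beyond this codimension bookkeeping and the invocation of purity.
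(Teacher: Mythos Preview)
Your proof is correct and follows essentially the same approach as the paper: both arguments factor $i_*$ through $\pi_1(Y_D)$, invoke Zariski--Nagata purity to identify $\pi_1(Y_D)\cong\pi_1(Y)$, and then use the (EP) hypothesis on $f_D$ for the remaining injectivity. The paper simply asserts that $Y_D$ contains all codimension~$1$ points of $Y$, whereas you spell out the codimension computation via flatness of $f$; this extra detail is fine and the two proofs are otherwise identical.
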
 

\begin{proof} The generic fibre of $f$ agrees with the generic fibre of $f_D$ and thus is geometrically connected. 
Let $\oxi$ be a geometric generic point of $Z$, $\oa\in Y_\oxi(k(\oxi))$ and $\oy:=i(\oa)$. 
Note that $Y_D$ contains all codimension $1$ points of $Y$. Hence $\pi_1(Y_D, \oy)\to \pi_1(Y, \oy)$ is an isomorphism by the 
purity of the branch locus. Furthermore $\pi_1(Y_\oxi, \oa)\to \pi_1(Y_D, \oy)$ is injective because $f_D$ satisfies condition (EP). It follows that
$\pi_1(Y_\oxi, \oa)\to \pi_1(Y, \oy)$ is injective, as desired. 
\end{proof} 

\begin{fact} \label{fact:SGA1ht} Let $Y$ and $Z$ be connected normal varieties over a field $K$ of characteristic zero. Let $f: Y\to Z$  be a smooth proper
morphism. Assume that all fibres of $f$ are geometrically connected and that $f$ has a section. Let $\oxi$ be a geometric generic point of $Z$ and $a\in Y_\oxi(k(\oxi))$. Then there is an exact sequence
$$0\to \pi_1(Y_\oxi, a)\to \pi_1(Y, i(a))\to \pi_1(Z, \oxi)\to 0$$
by \cite[XIII.4.3, XIII.4.4]{SGA1}. In particular $f$ satisfies condition (EP). 
\end{fact}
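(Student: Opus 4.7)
The plan is to invoke the homotopy exact sequence for a proper fibration from SGA1 Expos\'e XIII. Our setup is precisely the classical one: $f: Y \to Z$ is a smooth proper morphism between connected normal $K$-varieties, with all fibres geometrically connected, and admitting a section $s: Z \to Y$. So the standard hypotheses are in place; the argument will essentially consist of verifying these hypotheses and assembling the two cited SGA1 results.

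First, I would apply SGA1 XIII.4.3 to get the right-exact portion: for a proper morphism between connected noetherian schemes with geometrically connected fibres, and for any geometric point $\oxi$ of $Z$ and any $a \in Y_\oxi(k(\oxi))$, the sequence
$$\pi_1(Y_\oxi, a) \to \pi_1(Y, i(a)) \to \pi_1(Z, \oxi) \to 1$$
is exact at the last two terms. The surjectivity on the right reflects the geometric connectedness of the fibres, and the exactness in the middle says that the image of $\pi_1(Y_\oxi, a)$ is exactly the kernel of $f_*$. At this stage, injectivity of the first map is not yet known.

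Second, I would upgrade this to a short exact sequence by using the section, following SGA1 XIII.4.4. The section $s$ gives a retraction $s_*: \pi_1(Z, \oxi) \to \pi_1(Y, i(a))$ of $f_*$, so the right map is split surjective; more substantively, the section guarantees that every connected \'etale cover of $Y_\oxi$ is dominated (after choosing a connected component) by the pullback to $Y_\oxi$ of some connected \'etale cover of $Y$, which is the Galois-theoretic translation of injectivity of $\pi_1(Y_\oxi, a) \to \pi_1(Y, i(a))$. Combined with Step~1, this yields the short exact sequence
$$1 \to \pi_1(Y_\oxi, a) \to \pi_1(Y, i(a)) \to \pi_1(Z, \oxi) \to 1.$$

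Finally, to deduce that $f$ satisfies condition (EP), one simply reads off the definition: the connectedness of $Y_\oxi$ follows from the geometric connectedness of the fibres (part of the hypothesis), and the required injectivity of $i_*: \pi_1(Y_\oxi, \oa) \to \pi_1(Y, i(\oa))$ is exactly the left-exactness established in Step~2. The principal technical content is the injectivity in Step~2: this is the nontrivial half of the SGA1 statement, and its proof crucially uses smoothness and properness of $f$ together with the section to extend \'etale covers from the geometric generic fibre to all of $Y$.
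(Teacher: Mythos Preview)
Your proposal is correct and matches the paper's approach exactly: the paper does not give a proof of this Fact beyond the citation to \cite[XIII.4.3, XIII.4.4]{SGA1}, and your unpacking of how these two results combine (right-exactness from the proper fibration, left-injectivity from the section) is precisely the intended content of that citation. The final paragraph deducing condition (EP) from the exact sequence is also the paper's intended reasoning.
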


\begin{coro} \label{coro:SGA1ht} Let $Y$ and $Z$ be connected smooth varieties over a field $K$ of characteristic zero. Let $f: Y\to Z$  be a smooth proper
morphism. If $Y(R(Z))\neq \emptyset$, then all fibres of $f$ are geometrically connected and $f$ satisfies condition (EP). 
\end{coro}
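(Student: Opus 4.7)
The plan is to deduce the corollary by reducing it, via Lemma \ref{lemm:ht-loc}, to the situation of Fact \ref{fact:SGA1ht}, which requires an honest section over the whole base (not merely an $R(Z)$-point).

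First I would handle the geometric connectedness of the fibres. The assumption $Y(R(Z))\ne\emptyset$ gives an $R(Z)$-rational point of the generic fibre $Y_\xi$, which is smooth and integral; the existence of such a point forces $Y_\xi$ to be geometrically connected over $R(Z)$. By Corollary \ref{coro:conn} applied to the smooth proper morphism $f$, it then follows that every fibre of $f$ is geometrically connected. This takes care of the first conclusion.

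Next I would promote the $R(Z)$-point to a section over a big open $D\subset Z$ containing all codimension $1$ points. The $R(Z)$-point corresponds to a rational section $Z\dashrightarrow Y$ of $f$. For each codimension $1$ point $z$ of $Z$ the local ring $\OOO_{Z,z}$ is a discrete valuation ring (since $Z$ is regular and connected), so by the valuative criterion of properness applied to $f$, the rational section extends uniquely to a section $\Spec(\OOO_{Z,z})\to Y$. These local sections are compatible on overlaps by the separatedness of $f$, and hence (as in the proof of Proposition \ref{lemm:vojjav}) glue to a section $s: D\to Y$ of $f$ over an open subscheme $D\subset Z$ containing every codimension $1$ point of $Z$. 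Since $Z$ is irreducible and smooth, $D$ is a connected smooth variety, and $Y_D$ is a connected smooth variety as well.

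Finally I would apply the cited structural results to $f_D:Y_D\to D$. The morphism $f_D$ is smooth and proper, all its fibres are geometrically connected, and by construction it carries the section $s$. Hence Fact \ref{fact:SGA1ht} applies and yields that $f_D$ satisfies condition (EP). Since $D$ contains all codimension $1$ points of $Z$, Lemma \ref{lemm:ht-loc} then lifts condition (EP) from $f_D$ to $f$, completing the proof. The only mildly delicate point is the extension step in paragraph two, but it is entirely standard for a proper morphism to a regular base, and it is precisely the mechanism that converts the hypothesis $Y(R(Z))\ne\emptyset$ into a genuine section to which Fact \ref{fact:SGA1ht} can be applied.
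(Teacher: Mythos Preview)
Your proof is correct and follows essentially the same route as the paper: use Corollary \ref{coro:conn} to get geometric connectedness of all fibres, extend the $R(Z)$-point to a section over an open $D\subset Z$ containing all codimension $1$ points (via the valuative criterion of properness, which you spell out more explicitly than the paper does), apply Fact \ref{fact:SGA1ht} to $f_D$, and then lift condition (EP) to $f$ via Lemma \ref{lemm:ht-loc}.
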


\begin{proof} Let $\xi$ be the generic point of $Z$. By Corollary \ref{coro:conn} all fibres of $f$ are geometrically connected. 
Let $\sigma\in R(Z)$. Then $\sigma$ extends to a section $s: D\to Y$ of $f$ defined on an open subset $D$ of $Z$ that contains all codimension $1$ points of $Z$. By Fact \ref{fact:SGA1ht} $f_D: Y_D\to D$ satisfies condition (EP). By Lemma
\ref{lemm:ht-loc} $f$ satisfies condition (EP). 
\end{proof}

\begin{lemm} \label{lemm:EPprod} Let $K$ be a field of characteristic zero. Let $Z_1, Z_2$ be geometrically connected normal $K$-varieties and $f: Z_1\times Z_2\to Z_2$ the projection. Then $f$ satisfies property (EP). 
\end{lemm}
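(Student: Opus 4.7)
The plan is to reduce condition (EP) for the projection $f: Y = Z_1 \times_K Z_2 \to Z_2$ to the well-known injectivity of $\pi_1$ under base change along a field extension of a geometrically connected variety. Let $\oxi: \Spec(\Omega) \to Z_2$ be a geometric generic point, so $\Omega = k(\oxi)$ is an algebraically closed field containing $R(Z_2)$. Then $Y_\oxi = Z_1 \times_K \Spec(\Omega) = Z_{1,\Omega}$, which is connected because $Z_1$ is geometrically connected over $K$; this takes care of the connectedness part of (EP).

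The key idea is to exploit the first projection $p: Y \to Z_1$. I would observe that the composition $p \circ i: Y_\oxi \to Z_1$, with $i: Y_\oxi \hookrightarrow Y$ the canonical inclusion, coincides with the base change morphism $\mu': Z_{1,\Omega} \to Z_1$ induced by $\Spec(\Omega) \to \Spec(K)$. Since $p_* \circ i_* = \mu'_*$ on fundamental groups, it is enough to show that $\mu'_*: \pi_1(Y_\oxi, \oa) \to \pi_1(Z_1, \mu'(\oa))$ is injective for any $\oa \in Y_\oxi(\Omega)$.

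For this, I would fix an embedding $\oK \subset \Omega$ and factor $\mu' = \mu'' \circ \mu$, where $\mu: Z_{1,\Omega} \to Z_{1,\oK}$ and $\mu'': Z_{1,\oK} \to Z_1$ are the natural base change morphisms. The map $\mu''_*$ is injective by the fundamental short exact sequence
\begin{equation*}
1 \to \pi_1(Z_{1,\oK}, \oa) \to \pi_1(Z_1, \mu''(\oa)) \to \Gal(\oK / K) \to 1
\end{equation*}
of SGA 1, Exp. IX, which applies because $Z_1$ is geometrically connected over $K$. The map $\mu_*$ is an isomorphism by the invariance of the \'etale fundamental group under extensions of algebraically closed base fields in characteristic zero. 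Composing, $\mu'_*$, and hence $i_*$, are injective, which establishes (EP).

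The main ingredient beyond elementary manipulations is the invariance of $\pi_1$ under extensions of algebraically closed fields in characteristic zero; this is standard (SGA 1, Exp. X, Corollaire 1.8 treats the proper case, and the general characteristic-zero case follows by spreading out finite \'etale covers to a finite-type $\oK$-subalgebra of $\Omega$ and specializing at an $\oK$-point, combined with Artin's comparison theorem). Everything else is a routine verification of the factorization of $i$ through the first projection.
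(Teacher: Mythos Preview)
Your proof is correct and follows essentially the same strategy as the paper: both arguments reduce injectivity of $i_*$ to the injectivity of $\pi_1$ under base change of a geometrically connected variety to an algebraically closed extension, using the first projection to $Z_1$ as a retraction. The paper factors $i$ as $Y_\oxi \to Y_\Omega \to Y$ and observes that the first map is a section of the projection $Y_\Omega = Z_{1,\Omega}\times_\Omega Z_{2,\Omega} \to Z_{1,\Omega}$, while you compose $i$ directly with the projection $Y \to Z_1$; these are two paths around the same commutative square, and the ingredients (fundamental exact sequence for $Z_1$ geometrically connected, invariance of $\pi_1$ under extensions of algebraically closed fields in characteristic zero) are identical.
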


\begin{proof} Let $\Omega$ be an algebraically closed field and $z: \Spec(\Omega)\to Z_2$ be a geometric point of $Z_2$. 
Let $Y=Z_1\times Z_2$. 
Let $y$ be a geometric point of $Y_z$. The canonical morphism $Y_z\buildrel j\over\longrightarrow Y$ factors as $i: Y_z\to Y_\Omega\buildrel p \over \longrightarrow Y$. I claim that $i_*: \pi_1(Y_z, y)\to \pi_1(Y, i(y))$ is injective. As $p_*: \pi_1(Y_\Omega, j(y))\to \pi_1(Y, i(y))$ is injective, it is enough to show that $j_*: \pi_1(Y_z, y)\to \pi_1(Y_\Omega, j(y))$ is injective. But $j: Y_z=Z_{1,\Omega}\to Y_\Omega=Z_{1,\Omega}\times Z_{2,\Omega}$ is a section of the projection $g: Z_{1,\Omega}\times Z_{2,\Omega}\to Z_{1,\Omega}$, i.e. $g\circ j=\mathrm{Id}$. It follows that 
$g_*\circ j_*=Id$. Thus $j_*$ is injective, as desired.
\end{proof}

We shall see in Section \ref{sec:digression} further and more general situations in which condition (EP) is satisfied. 


\section{Splitting vertical covers}\label{sec:splitver}

For normal geometrically connected varieties $U$ and $V$ over a field of characteristic zero and $p: U\times V\to V$ is the projection, the structure of covers of $U\times V$ that are vertical over $p$ was understood in \cite[Lemma 2.1]{Lug3}. See \cite[Lemma 2.17]{CDJLZ} for the proper case. 
In this section we shall establish, in a situation where $Y, Z$ and $f: Y\to Z$ are as in Setup \ref{setup}, similar information about covers of $Y$ that are vertical over $f$ provided $f$ satisfies condition (EP). 

\begin{prop} \label{prop:split-neg:1} Let $K,X,Y,Z,\xi, f, g$ and $Z_0$ be as in Setup \ref{setup}.
Assume that $f$ is satisfies condition (EP). Assume that $g$ is vertical over $f$ and ramified.
There exist $K$-varieties $X', X'', Y'$ and $Z'$ and a commutative diagram
$$\xymatrix{
X\ar[d]_g & X' \ar[d]_{g'}\ar[l]_h& X''\ar[dl]^{g''}\ar[l]_{\mathrm{incl}}\\
Y\ar[d]_f & Y' \ar[d]_{f'}\ar[l]_v&\\
Z & Z'\ar[l]_u
}$$
with the following properties.
\begin{enumerate}
\item[(a)] $u: Z'\to Z$ is the relative normalization of $Z$ in $Y'$ and $f': Y'\to Z'$ the canonical $Z$-morphism. 
\item[(b)] $X'=X\times_Y Y'$ and $h$ and $g'$ are the projections of that fibre product. 
\item[(c)] The morphisms $u, v$ and $h$ are finite \'etale and surjective.
\item[(d)] The $K$-varieties $Z'$ and $Y'$ are normal and connected; the $K$-variety $X'$ is normal (and possibly disconnected).
\item[(e)] The morphism $f'$ is normal and its generic fibre is geometrically connected. Moreover $g'$ is finite and surjective. 
\item[(f)] The $K$-variety $X''$ is a connected component of $X'$ and
$\mathrm{incl}$ the inclusion. 
\item[(g)] The morphism $g''$ is finite, surjective and split-vertical (and thus induced) over $f'$. 
\item[(h)]  If $g$ is ramified, then $g''$ is ramified. 
\end{enumerate}
\end{prop}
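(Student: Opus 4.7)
The plan is to construct $v:Y'\to Y$ as a connected Galois finite étale cover whose geometric generic fibre over $Z$ realizes a chosen connected component $W$ of $X_{\bar\xi}$ as a quotient, and then to extract the desired component $X''$ from the pullback $X':= X\times_Y Y'$. Condition (EP) is the key that allows such a $Y'$ to exist, and the ramification of $g$ survives under pullback along the étale $h$.

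\emph{Step 1: Producing $U$ via (EP).} By Lemma \ref{lemm:vh:1}, verticality of $g$ implies $g_{\bar\xi}:X_{\bar\xi}\to Y_{\bar\xi}$ is finite étale; by (EP) the base $Y_{\bar\xi}$ is connected. Fix a geometric base point $\bar a$ in $Y_{\bar\xi}$, a connected component $W$ of $X_{\bar\xi}$ containing a lift of $\bar a$, and let $H\subset \pi_1(Y_{\bar\xi},\bar a)$ be the corresponding open subgroup. By (EP), $\pi_1(Y_{\bar\xi},\bar a)$ embeds as a closed subgroup of $\pi_1(Y,\bar a)$; since a closed subgroup of a profinite group carries the induced profinite topology, there is an open subgroup $U_0\subset \pi_1(Y,\bar a)$ with $\pi_1(Y_{\bar\xi},\bar a)\cap U_0=H$. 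Intersecting the finitely many $\pi_1(Y,\bar a)$-conjugates of $U_0$ yields an open normal subgroup $U\triangleleft \pi_1(Y,\bar a)$ with $\pi_1(Y_{\bar\xi},\bar a)\cap U\subset H$.

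\emph{Step 2: Construction of the diagram and verification of (a)--(f).} Let $v:Y'\to Y$ be the connected Galois finite étale cover associated to $U$; set $X':= X\times_Y Y'$ with projections $h:X'\to X$ and $g':X'\to Y'$, and let $u:Z'\to Z$ be the relative normalization of $Z$ in $Y'$ with canonical morphism $f':Y'\to Z'$. Conditions (a) and (b) are immediate. The morphism $h$ is finite étale as the base change of $v$, whence $X'$ is normal; $Y'$ is normal and connected; $g'$ is finite surjective as the base change of $g$; and $Z'$ is normal connected with $u$ finite by Fact \ref{fact:relnorm}. By construction $R(Z')$ is algebraically closed in $R(Y')$, so the argument in the proof of Proposition \ref{prop:stein-np} applied to $f'$ yields that $f'$ is normal with geometrically connected generic fibre, giving (d), (e). The étaleness of $u$ in (c) uses that $v$ is étale and $f$ is normal: the fibres $Y'_z$ of $v\circ f$ over any $z\in Z$ are étale over the geometrically normal fibres $Y_z$, hence geometrically reduced; combined with the factorization $Y'\to Z'\to Z$, the flatness of $f'$, and the generic étaleness of $u$ (Lemma \ref{rema:ufet}), this forces each fibre of $u$ to be étale.

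\emph{Step 3: The section and verification of (g).} The inclusion $\pi_1(Y_{\bar\xi})\cap U\subset H$ gives a surjective $\pi_1(Y_{\bar\xi})$-equivariant map $\pi_1(Y_{\bar\xi})/(U\cap \pi_1(Y_{\bar\xi}))\twoheadrightarrow \pi_1(Y_{\bar\xi})/H$, which translates into a $Y_{\bar\xi}$-morphism $\sigma:C\to W\hookrightarrow X_{\bar\xi}$ for each connected component $C$ of $Y'_{\bar\xi}$. The universal property of the fibre product turns $\sigma$ into a section $s:C\to X'_{\bar\xi}|_C=X_{\bar\xi}\times_{Y_{\bar\xi}} C$ of the finite étale morphism $g'_{\bar\xi}|_C$; being a section of an étale morphism, its image is open and closed, hence is a connected component of $X'_{\bar\xi}|_C$ and a fortiori of $X'_{\bar\xi}$. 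Define $X''$ to be the (unique) connected component of $X'$ whose $\bar\xi$-fibre contains $s(C)$, so that (f) holds; then $X''$ is integral normal and $g'':= g'|_{X''}:X''\to Y'$ is finite. Since $\dim X''=\dim X'=\dim Y=\dim Y'$ and $g''(X'')$ is a closed irreducible subset of the irreducible $Y'$ of the same dimension, $g''$ is surjective. Identifying $C$ with the geometric generic fibre $Y'_{\bar\xi'}$ of $f'$ over a geometric generic point $\bar\xi'$ of $Z'$ lying above $\bar\xi$, the restriction $g''_{\bar\xi'}$ is finite étale (pulled back from the étale $g'_{\bar\xi'}$) and admits the section $s:Y'_{\bar\xi'}\to X''_{\bar\xi'}$, so $g''$ is split-vertical over $f'$, and hence induced by Proposition \ref{prop:split-vertical}. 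This proves (g).

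\emph{Step 4: Ramification preservation (h) and main obstacle.} Suppose $g$ is ramified, with nonempty ramification divisor $R\subset X$. Since $h:X'\to X$ is finite étale surjective and $X$ is connected, every connected component of $X'$ surjects onto $X$; in particular $X''\to X$ is surjective, and $h^{-1}(R)\cap X''\neq\emptyset$. The étaleness of $h$ transports the ramification of $g$ to $g'$, and hence to $g''$ at the points of $h^{-1}(R)\cap X''$, establishing (h). The main obstacle in this argument is the étaleness of $u$ in (c): upgrading the generic étaleness from Lemma \ref{rema:ufet} to global étaleness requires a careful analysis of how the étale cover $v$ interacts with the normal morphism $f$, as sketched in Step 2; all other steps are essentially formal consequences of the construction and Proposition \ref{prop:split-vertical}.
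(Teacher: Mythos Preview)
Your overall strategy matches the paper's: use (EP) to produce a connected \'etale cover $v:Y'\to Y$ together with a $Y_{\bar\xi}$-morphism from a component $C$ of $Y'_{\bar\xi}$ to $X_{\bar\xi}$, pull back to form $X'$, and pick out the right component $X''$. Your group-theoretic construction of $Y'$ (via an open normal subgroup) is a slightly more explicit variant of what the paper does, and your dimension argument for surjectivity of $g''$ is a legitimate alternative to the paper's argument via dominance. Parts (f), (g), (h) are handled essentially as in the paper.

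The genuine gap is in your treatment of (c) and (e). For the \'etaleness of $u$ and the normality of $f'$, the paper simply invokes Remark~\ref{rema:get} (applied with $v:Y'\to Y$ in the role of ``$g$'' in the Section~\ref{sec:ClassCov} setup): since $v$ is \'etale, that remark gives directly that $u$ is \'etale and that the canonical morphism $f':Y'\to Z'$ is normal. Your sketch instead tries to argue fibrewise, but it presupposes ``the flatness of $f'$'' which you have not established, and the sentence ``this forces each fibre of $u$ to be \'etale'' is not a proof---geometric reducedness of the fibres $Y'_z$ together with generic \'etaleness of $u$ does not by itself force $u$ to be \'etale everywhere (there is no purity available, since $Z$ is only normal). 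Moreover, your appeal to ``the argument in the proof of Proposition~\ref{prop:stein-np}'' yields only that the generic fibre of $f'$ is geometrically connected (via the algebraic-closure-of-$R(Z)$-in-$R(Y')$ argument); that proof does \emph{not} show that $f'$ is a normal morphism. You correctly flag \'etaleness of $u$ as the main obstacle, but the fix is to invoke Remark~\ref{rema:get} rather than the ad hoc fibre argument; once $u$ is \'etale, normality of $f'$ follows from $u\circ f'=f\circ v$ being normal together with faithfully flat descent along $u$.
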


\begin{proof} 
Let $\oxi$ be a geometric generic point of $Z$ and $a\in Y_\oxi(k(\oxi))$. The morphism $g_\oxi: X_\oxi\to Y_\oxi$ is finite \'etale because $g$ is vertical over $f$. As $f$ satisfies condition (EP) there exist a connected \'etale cover $v: Y'\to Y$, 
a connected component $C$ of $Y'_\oxi=Y_\oxi\times_Y Y'$  and 
a $Y_\oxi$-morphism $\sigma: C\to X_\oxi$. Note that $Y'$ is a connected normal $K$-variety (*). We define $Z', u, f', X', h$ and $g'$ so that (a) and (b) become true. The morphism $v: Y'\to Y$ is finite \'etale surjective by construction. Hence also $h$ is finite \'etale surjective by base change. The morphism $u$ is finite \'etale surjective (cf. Remark \ref{rema:get}). Hence (c) is true. Now $Z'$ is connected (cf. Fact \ref{fact:relnorm}) and  \'etale over the normal $K$-variety $Z$, hence $Z'$ is normal. The $K$-variety $X'$ is \'etale over the normal $K$-variety $X$. Thus $X'$ is normal. This proves (d). The generic fibre of $f'$ is geometrically connected by construction and $f'$ is normal by Remark \ref{rema:get}. 
The morphism $g$ is finite and surjective. Thus $g'$ is also finite and surjective by base change. Hence we have (e). 

The morphism $g_\oxi: X_\oxi\to Y_\oxi$ is finite \'etale surjective ($g$ is vertical) and thus also 
$g'_\oxi: X'_\oxi\to Y'_\oxi$ is finite \'etale surjective by base chance. 
The $Y_\oxi$-morphism $\sigma: C\to X_\oxi$ induces a morphism $s: C\to g'^{-1}_\oxi(C)$ that makes the diagram
$$\xymatrix{
 & & & C\ar[dlll]_\sigma\ar[ddl]^{\mathrm{Id}_C}\ar@{..>}[dl]_s\\
 X_\oxi\ar[d]_{g_\oxi} & X'_\oxi\ar[l]^{h_\oxi}\ar[d]^{g'_\oxi} & g'^{-1}_\oxi(C)\ar[l]\ar[d]\\
 Y_\oxi & Y'_\oxi\ar[l]_{v_\oxi} & C\ar[l]
}$$
commute, i.e. $s$ is a section of the finite \'etale morphism $g'^{-1}_\oxi(C)\to C$. Thus there exists a connected component 
$D$ of $g'^{-1}_\oxi(C)$ such that $g'_\oxi|D: D\to C$ is an isomorphism (*). There exists a connected component $X''$ of $X'$ such that $D$ is a connected component of $X''_\oxi$ and we let $g''=g'|X''$. Then (f) is true by construction. 
Moreover $g''_\oxi$ is finite \'etale surjective (**). We next prove that (g) holds true for that choice of $X''$. 

The morphism $\mathrm{incl}$ is finite. It follows that $g''=g'\circ \mathrm{incl}$ is finite. The morphism $h\circ \mathrm{incl}$ is finite \'etale, hence open and closed. As $X$ is connected it follows that $h\circ \mathrm{incl}$ is surjective. As $g$ is surjective we see that $X''$ is dominant over the integral $K$-variety $Y$. Only the generic point of $Y'$ is mapped to the generic point of $Y$ under $v$ by (c). Thus $g''$ must be dominant. As $g''$ is also a closed morphism, it follows that $g''$ is surjective. We next show that $g''$ is split-vertical over $f'$. For this we consider the commutative diagram
$$\xymatrix{
 &    & X_\oxi\ar[ld]\ar[ddd] && X'_\oxi\ar[ld]\ar[ll]\ar[ddd]&& X''_\oxi \ar[ddd]\ar[ll]\ar[llld] && D\ar[ll]\ar@{>->>}[llld]\\
 &Y_\oxi\ar[ld] \ar[ddd] & &Y'_\oxi\ar[ll] \ar[ddd]\ar[ld]&&C\ar[ll]\\
\oxi \ar[ddd] && Z'_\oxi\ar[ll]\ar[ddd]\\
 &    & X\ar[ld]_g & &X' \ar[ld]_{g'}\ar[ll]_h&&X''  \ar[ll]_{\mathrm{incl}}\ar[llld]^{g''}\\
 &Y \ar[ld]_f & &Y'\ar[ll]_v\ar[ld]_{f'} & &\\
Z & & Z'\ar[ll]_u && \\  
}.$$

The scheme $Z'_\oxi$ is finite \'etale over $\oxi=\Spec(\Omega)$. Thus $Z'_\oxi=\prod_{j=1}^s \oxi'_j$ where each $\oxi'_j$ is just a copy of $\Spec(\Omega)$. Hence $Y'_\oxi=\prod_{j=1}^s 
f_\oxi'^{-1}(\oxi'_j)$ and the $f_\oxi'^{-1}(\oxi'_j)$ are connected by (e). Hence there exists $j$ such that $C=f_\oxi'^{-1}(\oxi'_j)$. 
We can view $\oxi_j'$ as a geometric point of $Z'$. By (*) and (**) it follows that $g''^{-1}f'^{-1}(\oxi_j')\to f'^{-1}(\oxi_j')=C$ is finite \'etale and has a section, i.e. that $g''$ is split-vertical over $f'$. Proposition \ref{prop:split-vertical} implies that $g''$ is induced over $f'$. 
Hence we have shown (g).

For the proof of (h) assume that $g$ is ramified. Recall that $X, X''$ and $Y$ are normal connected varieties and that $h'=h\circ incl$, $g''$ and $v$ are finite and surjective. As $g$ is ramified, $g\circ h'$ must be ramified, and as $g\circ h'=v\circ g''$ it follows that $g''$ is ramified. 
\end{proof}

\begin{prop} \label{prop:split-neg} Let $K,X,Y,Z,\xi, f, g$ and $Z_0$ be as in Setup \ref{setup}.
Assume that $f$ is satisfies condition (EP). If $g$ is vertical over $f$ and ramified, then 
there exist connected normal $K$-varieties $X''$ and $Z''$, a finite \'etale morphism $X''\to X$ 
a finite surjective ramified morphism $Z''\to Z$ and a dominant $Z$-morphism $X''\to Z''$. 
\end{prop}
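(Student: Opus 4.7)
The plan is to deduce Proposition \ref{prop:split-neg} directly from Proposition \ref{prop:split-neg:1} by an application of relative normalization. First, I would invoke Proposition \ref{prop:split-neg:1} to obtain the diagram containing the finite \'etale surjective morphisms $u: Z'\to Z$, $v: Y'\to Y$, $h: X'\to X$, the normal morphism $f': Y'\to Z'$ with geometrically connected generic fibre, and the connected component $X''$ of $X'$ together with the morphism $g'': X''\to Y'$, which is finite surjective, ramified (by property (h) since $g$ is ramified), and induced over $f'$ (by property (g) combined with Proposition \ref{prop:split-vertical}).

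Next, I would take $Z''$ to be the relative normalization of $Z'$ in $X''$, with structure morphism $u'': Z''\to Z'$ and canonical $Z'$-morphism $h'': X''\to Z''$. Since $X''$ is normal and connected, Fact \ref{fact:relnorm} guarantees that $Z''$ is a normal connected $K$-variety, that $u''$ is finite, and that $h''$ is dominant; combined with the surjectivity of $g''$ one sees that $u''$ is surjective as well. Since $g''$ is induced over $f'$ and ramified, Remark \ref{rema:in-an} applies and yields that $u''$ is ramified.

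Now the composition $w:=u\circ u'': Z''\to Z$ is a finite surjective morphism of normal connected $K$-varieties. Since $u$ is \'etale and $u''$ is ramified, it follows from Remark \ref{rema:basic} (unramifiedness is detectable on each factor) that $w$ is ramified. On the other side, the inclusion $X''\hookrightarrow X'$ is an open and closed immersion, and $h: X'\to X$ is finite \'etale surjective, so the composition $X''\to X$ is finite (its image is closed, and properness plus finite fibres) and \'etale. Finally, $h'': X''\to Z''$ can be regarded as a dominant $Z$-morphism via $w$, providing the required data.

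The main obstacle was essentially resolved in Proposition \ref{prop:split-neg:1}: the production of the connected component $X''$ whose restricted cover $g''$ is induced over $f'$ is the technical heart. The present proposition is largely a repackaging via relative normalization, with the only nontrivial verification being that the normalization morphism $u''$ is ramified --- and that is supplied \emph{gratis} by Remark \ref{rema:in-an}.
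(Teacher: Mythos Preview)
Your proof is correct and follows essentially the same route as the paper: take the relative normalization $Z''$ of $Z'$ in $X''$, use that $g''$ is induced over $f'$ and ramified (properties (g) and (h) of Proposition~\ref{prop:split-neg:1}) to conclude that $Z''\to Z'$ is ramified, and then compose with the \'etale $u:Z'\to Z$. The only cosmetic difference is that the paper spells out the fibre product $Y'\times_{Z'}Z''$ and the isomorphism $\alpha:X''\to Y'\times_{Z'}Z''$ explicitly, whereas you invoke Remark~\ref{rema:in-an} directly---which packages the same argument.
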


\begin{proof} Consider the data from Proposition \ref{prop:split-neg:1}. Let $w: Z''\to Z'$ be the normalization of $Z'$ in $X''$ and $\ell: X''\to Z''$ the
canonical morphism. $Z''$ is a normal connected $K$-variety, $w$ is finite and surjective and $\ell$ is dominant. 
By the universal mapping property of the fibre product the commutative diagram from Proposition \ref{prop:split-neg:1} can be extended to a  commutative diagram
 $$\xymatrix{
X\ar[d]_g & X' \ar[d]_{g'}\ar[l]_h& X''\ar[dl]_{g''}\ar[l]_{\mathrm{incl}}\ar[d]_\alpha\\
Y\ar[d]_f & Y' \ar[d]_{f'}\ar[l]_v& Y'\times_{Z'} Z''\ar[d]_q\ar[l]_p\\
Z & Z'\ar[l]_u & Z''\ar[l]_w 
}$$
such that $q\circ \alpha=\ell$. By Proposition \ref{prop:split-neg:1}(e) the morphism $\alpha$ is an isomorphism and by 
Proposition \ref{prop:split-neg:1}(f) the morphism $w$ is ramified. It follows that $u\circ w$ is finite surjective and ramified. Moreover $h\circ \mathrm{incl}$ is finite \'etale surjective. The assertion thus follows.
\end{proof}

\begin{defi} In the situation of Setup \ref{setup} we say that $g$ is {\bf almost negligible} over $f$ if 
 there exists a normal connected $K$-variety $X'$ and a finite \'etale $K$-morphism $h: X'\to X$ such that $g\circ h$ is negligible over $f$. 
\end{defi}

\begin{coro} \label{coro:split-neg}  Let $K,X,Y,Z,\xi, f, g$ and $Z_0$ be as in Setup \ref{setup}.
Assume that $f$ satisfies condition (EP). If $g$ is vertical over $f$ and ramified, then $g$ is almost negligible over $f$.
\end{coro}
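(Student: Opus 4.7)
The plan is to read this corollary off from Proposition \ref{prop:split-neg} essentially by unwinding definitions; the substantive work was already carried out in that proposition, and all that remains is to match its output with the notion of being \emph{almost negligible}.

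First I would apply Proposition \ref{prop:split-neg} under the hypotheses at hand (Setup \ref{setup}, condition (EP) on $f$, and $g$ vertical and ramified over $f$). That yields connected normal $K$-varieties $X''$ and $Z''$, a finite \'etale morphism $h: X'' \to X$, a finite surjective ramified morphism $w: Z'' \to Z$, and a dominant $Z$-morphism $\ell: X'' \to Z''$, where the $Z$-structure on $X''$ is the composition $f \circ g \circ h$ fixed by the commutative diagram built in the proof of Proposition \ref{prop:split-neg}.

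Next I would observe that the triple $(Z'', w, \ell)$ is precisely the data required by the definition of negligibility applied to the morphism $g \circ h: X'' \to Y$: a normal connected $K$-variety $Z''$, a finite surjective ramified $K$-morphism $w: Z'' \to Z$, and a $Z$-morphism from the source of $g \circ h$ to $Z''$. Hence $g \circ h$ is negligible over $f$, and by definition this exhibits $g$ as almost negligible over $f$, with witnessing finite \'etale cover $h: X'' \to X$ (note that $h$ is automatically surjective since $X$ is connected, cf. Remark \ref{rema:basic}).

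There is no real obstacle here; the corollary is a definitional repackaging of Proposition \ref{prop:split-neg} into the terminology of almost negligible covers, which is the form in which this structural result will be convenient to apply in Sections 6 and 7 when pulling back vertical ramified covers along sections.
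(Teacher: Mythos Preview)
Your proposal is correct and takes essentially the same approach as the paper: the paper's proof reads simply ``This is immediate from Proposition \ref{prop:split-neg},'' and you have spelled out precisely why it is immediate by matching the output of that proposition against the definitions of \emph{negligible} and \emph{almost negligible}.
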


\begin{proof} This is immediate from Proposition \ref{prop:split-neg}.
\end{proof}

\section{Pullback of covers along sections}

\begin{lemm} \label{lemm:negligible-sec}Let $K,X,Y,Z,\xi, f, g$ and $Z_0$ be as in Setup \ref{setup}. Let $D$ be an open subscheme of $Z$ 
that contains all codimension $1$ points of $Z$. Let
$s\in \Mor_Z(D,Y)$ and $T:=\{z\in D(K): s(z)\in g(X(K))\}$. 
If $Z$ is smooth and $g$ is almost negligible over $f$, then  $T$ is 
strongly thin in $D$. 
\end{lemm}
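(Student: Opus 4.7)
The plan is to reduce the proof to controlling the finite morphism $\pi : Q \to D$, where $Q = X\times_Y D$, and to show that every component of its normalization either fails to dominate $D$ or is ramified over $D$. Unpacking almost negligibility first, fix a normal connected $K$-variety $X'$ with a finite \'etale $K$-morphism $h : X' \to X$, a normal connected $K$-variety $Z''$ with a finite surjective ramified $K$-morphism $w : Z'' \to Z$, and a $Z$-morphism $\alpha : X' \to Z''$ satisfying $w\circ \alpha = f\circ g\circ h$. Set $D^* := D\times_Z Z'' = w^{-1}(D)$; because $D \subset Z$ is open, $D^*$ is an open subscheme of the normal connected variety $Z''$, hence itself normal and connected, and $w_D : D^* \to D$ is finite surjective. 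Since $Z$ is smooth, Zariski--Nagata purity forces the branch locus of $w$ on $Z$ to be pure of codimension one; it is non-empty because $w$ is ramified, and as $D$ contains every codimension one point of $Z$, it meets $D$ in a codimension one subset, so $w_D$ is ramified.

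Because $f$ is separated, the section $s$ is a closed immersion, so $Q := X\times_Y D$ embeds into $X$ as the closed subscheme $g^{-1}(s(D))$, and the induced projection $\pi : Q \to D$ is finite with $T = \pi(Q(K))$. If $\pi$ is not surjective then $T$ is already contained in the proper closed subset $\pi(Q)$ of $D$; otherwise, by Lemma \ref{lemm:disconcover} it suffices to show that the image $(\pi\circ \nu)(Q^\nu(K))$ is strongly thin, where $\nu : Q^\nu = \coprod_i Q_i \to Q$ is the normalization. Each induced finite morphism $\pi_i : Q_i \to D$ from a normal connected $K$-variety falls into three subcases: non-surjective $\pi_i$ give proper-closed contributions, surjective ramified $\pi_i$ contribute directly to the strongly-thin description from the definition, and the only remaining case to rule out is that $\pi_i$ is finite \'etale and surjective.

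To rule out the \'etale case, form $R_i := Q_i\times_X X'$, which is finite \'etale over $Q_i$ as a base change of $h$; pick any connected component $R_{i,k}$, which is then a normal connected $K$-variety with $R_{i,k}\to Q_i$, and hence $R_{i,k}\to D$, finite \'etale and surjective. The two compatible morphisms $R_{i,k} \to X' \xrightarrow{\alpha} Z''$ and $R_{i,k} \to Q_i \xrightarrow{\pi_i} D \hookrightarrow Z$ agree as morphisms to $Z$ by the relation $w\circ\alpha = f\circ g\circ h$, so the universal property of the fibre product $D^* = D\times_Z Z''$ produces a $D$-morphism $R_{i,k}\to D^*$. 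Since $R_{i,k}$ and $D^*$ are integral and the composite $R_{i,k}\to D^*\to D$ is dominant, the generic point of $R_{i,k}$ maps to the generic point of $D^*$, giving an inclusion $R(D^*)\hookrightarrow R(R_{i,k})$. In the tower $R(D)\subset R(D^*)\subset R(R_{i,k})$ the top extension is unramified at every codimension one place of $R(D)$ because $R_{i,k}\to D$ is \'etale, so multiplicativity of ramification indices along the tower forces $R(D^*)/R(D)$ to be unramified at every such place as well; combined with Zariski--Nagata purity applied to the finite map $w_D$ over the smooth $D$, this makes $w_D$ \'etale, contradicting its ramifiedness established in the first paragraph.

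The hardest step will be the tower-of-extensions argument: it requires the careful verification that $R_{i,k} \to D^*$ is dominant, together with a clean use of multiplicativity of ramification indices in the tower and of Zariski--Nagata purity on the smooth base $D$, in order to promote "unramified at all codimension one places of $D$" into "$w_D$ \'etale", which is precisely what yields the desired contradiction.
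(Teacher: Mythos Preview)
Your proof is correct and follows essentially the same route as the paper: form the fibre product $Q=X\times_Y D$ along the section, normalize and reduce via Lemma~\ref{lemm:disconcover} to components $Q_i$, pass to the \'etale cover coming from $X'\to X$, and use that the resulting component factors through the ramified $D^*\to D$. The only difference is cosmetic: the paper argues directly that the composite $C'\to D^*\to D$ is ramified (using Remark~\ref{rema:basic} on compositions of covers of normal connected varieties), whereas you argue the contrapositive via ramification indices in the tower $R(D)\subset R(D^*)\subset R(R_{i,k})$.

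One small point to tighten in your final paragraph: for the multiplicativity argument to conclude that \emph{every} prime of $D^*$ over a codimension~$1$ point of $D$ is unramified, you need $R_{i,k}\to D^*$ to be surjective, not merely dominant. This is immediate once you observe that $R_{i,k}\to D^*$ is finite (since $R_{i,k}\to D$ is finite and $D^*\to D$ is separated) and dominant, hence closed and surjective; but it is worth stating, since dominance alone would only control the primes of $D^*$ lying under primes of $R_{i,k}$.
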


\begin{proof} Assume that $g$ is almost negligible. Then there exist normal connected $K$-varieties $X'$ and $Z'$, 
a finite \'etale $K$-morphism $h: X'\to X$, a ramified finite surjective $K$-morphism $u: Z'\to Z$ 
and a dominant $Z$-morphism $f': X'\to Z'$. As $Z$ is regular and $D$ contains all codimension $1$ points of $Z$, purity of the branch locus
implies that $u_D: Z'_D\to D$ is ramified. 
We put $F=g^{-1}(s)$ and $F'=h^{-1}g^{-1}(s)$. We identify $s$ with an element of $\Mor_D(D, Y_D)$  and consider the 
diagram
$$\xymatrix{
F'\ar[r]^{h'}\ar[d]^{s'} & F\ar[r]^{g'}\ar[d]& D\ar[d]^s\\
X'_D\ar[r]^{h_D}\ar[d]^{f'_D} & X_D\ar[r]^{g_D} & Y_D\ar[d]^{f_D}\\
Z'_D\ar[rr]^{u_D} && D
}
$$
where the two upper squares are cartesian and $f_D\circ s=\mathrm{Id}_D$. 
The morphism $g'$ is finite and surjective by base change
because $g_D$ is finite and surjective. The morphism $h'$ is finite \'etale surjective because $h_D$ has these properties. 
Let $\nu: F^\nu\to F$ be the normalization of $F$ and $g''=g'\circ \nu$. Note that $g''$ is finite and surjective. 
Note that $T=g'(F(K))$ and we have to prove that this set is strongly thin in $D$. 
By Lemma \ref{lemm:disconcover} it is suffices to prove that $g''(F^\nu(K))$ is strongly thin in $D$. For this it is enough to show that $g''(C(K))$ is strongly thin in $D$ for every connected component $C$ of $F^\nu$. So let $C$ be such a connected component. If $g''|C: C\to D$ is not surjective then $g''(C(K))$ is contained in a proper closed subset of $D$, hence strongly thin in $D$. We can therefore assume that $g''|C: C\to D$ is surjective. It suffices to prove that  $g''|C: C\to D$ is ramified. 
The projection $h'': C\times_F F'\to C$ is finite \'etale surjective because $h'$ has these properties. We let $C'$ be a connected component of the normal $K$-variety $C\times_F F'$ and note that $h''|C': C'\to C$ is finite \'etale surjective. Thus it is enough to prove that the finite surjective morphism $w'':=(g''|C)\circ (h''|C')$ is ramified. We have a commutative diagram

$$\xymatrix{
C'\ar[rr]^{w''} \ar[d] & & D\\
Z'_D \ar[rru]_{u_D}
}
$$

of normal connected varieties. The vertical morphism is finite and surjective because $u_D$ and $w''$ are finite and surjective. As $u_D$ is ramified it follows that $w''$ is ramified, as desired. 
\end{proof}

\begin{prop} \label{prop:sec:good:pts} Let $K,X,Y,Z,\xi, f, g$ and $Z_0$ be as in Setup \ref{setup}. 
Let $D$ be an open subscheme of $Z$ 
that contains all codimension $1$ points of $Z$. 
Assume that $f$ satisfies condition (EP) and that $Z/K$ is smooth. 
Let
$s\in \Mor_Z(D,Y)$ and $T:=\{z\in D(K): s(z)\in g(X(K))\}$. 

\begin{enumerate}
\item[(a)] If $g$ is vertical over $f$ and ramified, then $T$ is 
strongly thin in $D$. 
\item[(b)] If the point $s_\xi\in Y_\xi(R(Z))$ does not lie in $g_\xi(X_\xi(R(Z)))$, then 
$T$ is thin in $D$. 
\end{enumerate}
\end{prop}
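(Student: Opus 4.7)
Part (a) will be immediate from the earlier machinery: under the hypotheses of (a), $f$ satisfies (EP) and $g$ is vertical over $f$ and ramified, so Corollary~\ref{coro:split-neg} furnishes that $g$ is almost negligible over $f$, and Lemma~\ref{lemm:negligible-sec} then produces the desired conclusion that $T$ is strongly thin in $D$.

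For part (b) my plan is to pull back $g$ along the section $s$. Set $F := X \times_{Y,s} D$ and let $\gamma\colon F \to D$ denote the projection. As a base change of the finite surjective morphism $g$, the map $\gamma$ is itself finite and surjective; a direct unwinding of the fibre product shows $T = \gamma(F(K))$. Since $D$ is a non-empty open subset of the smooth connected $K$-variety $Z$, it is normal and connected, so Lemma~\ref{lemm:disconcover} applies and reduces the task to proving that $(\gamma \circ \nu)(F^\nu(K))$ is thin in $D$, where $\nu\colon F^\nu \to F$ is the normalization of $F$. By Fact~\ref{fact:norm}, $F^\nu$ decomposes as $\coprod_I I^\nu$ over the irreducible components $I$ of $F_\red$, each $I^\nu$ being a normal connected $K$-variety finite over $D$.

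I would then partition the components into two types: those $I$ for which $I^\nu \to D$ is not surjective (the finite union of their images lies in a proper closed subset $C \subsetneq D$) and those for which $I^\nu \to D$ is surjective. The heart of the argument is the claim that every surjective $I^\nu \to D$ has degree $\geq 2$. If it had degree $1$, it would be a finite birational morphism between normal connected varieties, hence an isomorphism; composing its inverse with $I^\nu \to I \hookrightarrow F \to X$ would produce a $K$-morphism $\tau\colon D \to X$ with $g \circ \tau = s$, and restricting $\tau$ to the generic point $\xi$ of $Z$ would yield an $R(Z)$-point of $X_\xi$ lying over $s_\xi$ under $g_\xi$, contradicting the hypothesis $s_\xi \notin g_\xi(X_\xi(R(Z)))$.

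Combining the two cases then presents $(\gamma \circ \nu)(F^\nu(K))$ as thin in $D$ (the proper closed $C$ absorbs the non-surjective components and the surjective components supply finitely many finite surjective morphisms of degree $\geq 2$ from normal connected $K$-varieties), and hence via Lemma~\ref{lemm:disconcover} $T$ is thin in $D$. The only genuine content is the degree-$1$ contradiction, which really just says that a degree-one component of $F^\nu$ would split off as a section of $F \to D$ over $\xi$; all the rest is careful bookkeeping with Fact~\ref{fact:norm} and Lemma~\ref{lemm:disconcover} to handle the possible non-reducedness and disconnectedness of $F$.
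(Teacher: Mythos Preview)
Your proposal is correct and follows essentially the same approach as the paper. Part (a) is identical. For part (b), both you and the paper pull back $g$ along $s$ to obtain $F = X\times_{Y,s} D$, identify $T=\gamma(F(K))$, reduce via Lemma~\ref{lemm:disconcover} to the normalization $F^\nu$, and then argue that each component of $F^\nu$ dominant (equivalently, surjective) over $D$ has degree $\ge 2$; the only cosmetic difference is that the paper phrases the contradiction directly at the generic point (observing $F_\xi(R(Z))=\emptyset$ forces $I_\xi(R(Z))=\emptyset$ and hence $[R(I):R(Z)]\ge 2$), whereas you first promote a hypothetical degree-$1$ component to an actual section $\tau\colon D\to X$ via Zariski's main theorem before restricting to $\xi$.
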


\begin{proof}[Proof of Part (a)] 
By Proposition \ref{prop:split-neg} $g$ is almost negligible over $f$. Hence we can apply Lemma \ref{lemm:negligible-sec} to conclude that
$T\cap D(K)$ is strongly thin in $D$. 
\end{proof} 

\begin{proof}[Proof of Part (b)] Assume that  $s_\xi\notin g_\xi(X_\xi(R(Z)))$ (*).
Let $F:=g^{-1}(s)=X\times_{Y,s} D$ and $\gamma: F\to D$ the projection of the fibre product. Note that $T=\gamma(F(K))$. 
It is enough to prove that $T$ is thin in $D$. The diagrams
$$\xymatrix{F\ar[d]\ar[r]^\gamma & D\ar[d]^s & F_\xi\ar[d]\ar[r] & \xi\ar[d]\\
X\ar[r]^g&Y\ar[d]^f&X_\xi\ar[r]& Y_\xi\ar[d]\\
& Z && \xi
}$$
($\xi=\Spec(R(Z))$) are commutative and both squares cartesian. (*) implies that $F_\xi(R(Z))=\emptyset$ (**). 

Let $\nu: F^\nu\to F$ be the normalization of $F$. 
By Lemma \ref{lemm:disconcover} it suffices to show that $\gamma\circ \nu (F^\nu(K))$ is thin. 
The morphism $\gamma\circ \nu$ is finite. For this we have to prove that those connected components 
$[R(I):R(Z)]\ge 2$ for those connected components $I$ of $F^\nu$ that are dominant over $D$. Let $I$ be such a connected component. Then $I_\xi$ is normal and connected, hence integral, and finite surjective over $D$. 
We have $I_\xi=\Spec(R(I))$ and $I_\xi$ admits a $R(Z)$-morphism $I_\xi\to F_\xi$. By (**) we have $F_\xi(R(Z))=\emptyset$.
It follows that $I_\xi(R(Z))=\emptyset$. Thus there does not exist a $R(Z)$-algebra homomorphism $R(Z)\to R(I)$. It follows that 
$[R(I):R(Z)]\ge 2$, as desired.
\end{proof}

\section{Proof of Theorem \ref{thm:1}}\label{sec:main}

We shall establish Theormem \ref{thm:1} by combining the following Theorem \ref{thm:main:abstract} with information coming from \'etale homotopy theory (cf. Fact \ref{fact:SGA1ht}, Corollary \ref{coro:SGA1ht}).

\begin{thm}\label{thm:main:abstract}
Let $K$ be a field of characteristic zero. Let $Y$ and $Z$ be connected smooth $K$-varieties. 
Let $f: Y\to Z$ be a smooth morphism. Assume that $f$ satisfies condition (EP).  
Assume that $Z$ has HP over $K$ and that the subset $Y(Z)^{(1)}$ of $Y(R(Z))$ is not strongly thin in $Y_{R(Z)}$. Then 
$Y$ has WHP over $K$. 
\end{thm}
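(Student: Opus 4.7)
The plan is to follow the strategy outlined in the introduction. Let $(X_i)_{i\in I}$ be a finite family of normal connected $K$-varieties equipped with finite surjective ramified $K$-morphisms $g_i:X_i\to Y$, and let $V\subseteq Y$ be a non-empty open subscheme; the goal is to produce $v\in V(K)\setminus\bigcup_i g_i(X_i(K))$. Let $\xi$ denote the generic point of $Z$ and partition $I=I^v\sqcup I^h$ according as $g_i$ is vertical or horizontal over $f$.

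First, since $f$ is smooth (hence normal) and condition (EP) forces $Y_\xi$ to be geometrically connected, each $g_i$ falls into the framework of Setup \ref{setup}. Applying Proposition \ref{prop:stein-np} to the dominant morphism $X_i\to Z$, the generic fibre $X_{i,\xi}$ is a normal connected $R(Z)$-variety, and by the definition of horizontality, $g_{i,\xi}:X_{i,\xi}\to Y_\xi$ is finite, surjective and ramified for every $i\in I^h$. Consequently
\[
\Sigma:=(Y_\xi\setminus V_\xi)(R(Z))\cup\bigcup_{i\in I^h}g_{i,\xi}(X_{i,\xi}(R(Z)))
\]
is strongly thin in $Y_\xi$, so the hypothesis that $Y(Z)^{(1)}$ is not strongly thin in $Y_\xi$ supplies $\sigma\in Y(Z)^{(1)}\setminus\Sigma$. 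By Proposition \ref{lemm:vojjav}, $\sigma$ extends to a $Z$-morphism $s:D\to Y$ on an open $D\subseteq Z$ that contains every codimension-one point of $Z$; shrinking to $D':=s^{-1}(V)$, which is a dense open of $Z$ because $s(\xi)=\sigma\in V$, ensures $s(D'(K))\subseteq V(K)$.

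Second, for each $i\in I$ define $T_i:=\{z\in D(K):s(z)\in g_i(X_i(K))\}$. Since $f$ satisfies (EP) and $Z$ is smooth, Proposition \ref{prop:sec:good:pts} applies: for $i\in I^v$, part (a) gives $T_i$ strongly thin in $D$ (using that $g_i$ is ramified); for $i\in I^h$, since $s_\xi=\sigma\notin g_{i,\xi}(X_{i,\xi}(R(Z)))$, part (b) gives $T_i$ thin in $D$.

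Third, thin subsets of $D$ lift to thin subsets of $Z$: every witnessing cover $h_j:W_j\to D$ of degree $\ge 2$ from a normal connected $W_j$ extends to the normalization $\widetilde W_j$ of $Z$ in $R(W_j)$, which is a normal connected finite cover of $Z$ of the same degree satisfying $\widetilde W_j\times_Z D=W_j$ (by compatibility of relative normalization with open immersions), so $h_j(W_j(K))\subseteq \widetilde h_j(\widetilde W_j(K))$. Together with the proper closed set $Z\setminus D'$, this exhibits $\bigcup_i T_i\cup(Z\setminus D')(K)$ as a thin subset of $Z$. Since $Z$ has HP, I can pick $z\in Z(K)$ outside this set, and then $v:=s(z)\in V(K)\setminus\bigcup_i g_i(X_i(K))$, completing the proof. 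The substantive technical work has already been packaged into the key Lemma \ref{lemm:key} and Proposition \ref{prop:sec:good:pts}; at this assembly level the only genuinely delicate point is verifying that $X_{i,\xi}$ is normal \emph{and} connected, so that every horizontal cover contributes directly to a strongly thin subset of $Y_\xi$ without any decomposition into components being needed.
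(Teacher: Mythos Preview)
Your argument is correct and follows essentially the same route as the paper's proof: split the covers into vertical and horizontal, use the non-strongly-thin assumption on $Y(Z)^{(1)}$ to find a good section avoiding the horizontal covers generically, then apply Proposition~\ref{prop:sec:good:pts} to show each $T_i$ is thin in $D$ and finish with HP. The only cosmetic difference is that the paper applies HP directly to $D$ (noting that $D$, as a dense open of the HP variety $Z$, itself has HP) rather than lifting the thin witnesses from $D$ back to $Z$ as you do; both are valid and equivalent. Your justification that $X_{i,\xi}$ is normal and connected is fine, though it follows more directly from Remark~\ref{rema:basic} (the generic fibre of a dominant morphism of normal integral schemes is normal and integral) than from Proposition~\ref{prop:stein-np}.
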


\begin{proof} Let $(X_i)_{i\in I}$ a finite family of normal connected $K$-varieties and $(g_i: X_i\to Y)_{i\in I}$ a finite family of 
finite surjective ramified morphisms. Let $V\subset Y$ be a non-empty open subset. We have to prove
that there exists a point $v\in V(K)$ such that $v\notin \bigcup_{i\in I} g_i(X_i(K))$. The generic fibre $Y_\xi$ of $f$ 
has WHP, hence it is geometrically irreducible over $R(Z)$. Let $I^h$ (resp. $I^v$) be the set of all $i\in I$ such that 
$g_i$ is horizontal (resp. vertical) over $f$.  For every $i\in I_h$ the morphism $g_{i, \xi}: X_{i, \xi}\to Y_\xi$ is ramified.
Here $Y_\xi$ is $R(Z)$-variety that has WHP and $X_{i, \xi}$ a normal connected $R(Z)$-variety. Hence the subset 
$M:=\bigcup_{i\in I_h} g_{i, \xi}(R(Z))$
of $Y_\xi(R(Z))$ is strongly thin in $Y_\xi$. As the subset  $Y(Z)^{(1)}$ of $Y(R(Z))=Y_\xi(R(Z))$ is not strongly in $Y_\xi$, we can choose 
a point $\sigma\in Y(Z)^{(1)}\cap V_\xi(R(Z))$ such that $\sigma \notin M$. That point $\sigma$ extends to a $Z$-morphism $s: D\to Y$ for some non-empty open subscheme $D$ of $Z$ that contains all codimension $1$ points of $Z$ (cf. Proposition \ref{lemm:vojjav}). For $i\in I$ we define
$$T_i:=\{z\in D(K): s(z)\in g(X(K))\}.$$
Proposition \ref{prop:sec:good:pts}(b) implies that $T_i$ is thin in $D$ for every $i\in I^h$. On the  other hand, 
by Proposition \ref{prop:sec:good:pts}(a), $T_i$ is (strongly) thin in $D$ for every $i\in I^v$. Hence $T_i$ is a thin subset of $D$ for every $i\in I$. 
The set $T:=\bigcup_{i\in I} T_i$ is thin in $D$. As $\sigma\in V_\xi(R(Z))$ it follows that $s^{-1}(V)$ is a non-empty open subset of $D$. As $Z$ (and thus also $D$) satisfies HP 
we can choose $z\in s^{-1}(V)(K)\setminus T$. Then $s(z)\in V(K)$ and $s(z)\notin  \bigcup_{i\in I} g_i(X_i(K))$, as desired. 
\end{proof}

\begin{coro} \label{coro:main:smoothproper}
Let $K$ be a field of characteristic zero. Let $Y$ and $Z$ be connected smooth $K$-varieties. 
Let $f: Y\to Z$ be a smooth proper morphism. 
Assume that $Z$ has HP over $K$ and that the generic fibre $Y_{R(Z)}$ has WHP over $R(Z)$. Then 
$Y$ has WHP over $K$. 
\end{coro}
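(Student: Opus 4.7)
The plan is to deduce the corollary directly from Theorem~\ref{thm:main:abstract} by verifying its two non-trivial hypotheses: that $f$ satisfies condition (EP), and that $Y(Z)^{(1)} \subset Y(R(Z))$ is not strongly thin in $Y_{R(Z)}$. All the remaining hypotheses of Theorem~\ref{thm:main:abstract} (characteristic zero, $Y$ and $Z$ connected smooth, $f$ smooth, $Z$ having HP) are immediate from those of the corollary.

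First I would treat the non-thinness statement. By assumption $Y_{R(Z)}$ has WHP over $R(Z)$, which by definition means that $Y_{R(Z)}(R(Z))$ is not strongly thin in $Y_{R(Z)}$; in particular this set is nonempty, since the empty set is strongly thin (take $C = \emptyset$ and $J = \emptyset$ in the definition). Because $f: Y \to Z$ is proper and $Z$ is smooth (hence noetherian, normal and connected), Proposition~\ref{prop:voj:proper} applied with $S = Z$ yields the identification $Y(Z)^{(1)} = Y(R(Z)) = Y_{R(Z)}(R(Z))$. Thus $Y(Z)^{(1)}$ is not strongly thin in $Y_{R(Z)}$, which is the required hypothesis.

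Next I would verify condition (EP). Having just observed that $Y(R(Z)) \neq \emptyset$, and recalling that $f$ is smooth and proper between smooth connected $K$-varieties, Corollary~\ref{coro:SGA1ht} immediately gives that all fibres of $f$ are geometrically connected and that $f$ satisfies condition (EP). (Under the hood the input is Fact~\ref{fact:SGA1ht}, applied on the open subscheme $D \subseteq Z$ on which a chosen $R(Z)$-section extends, combined with the purity-based Lemma~\ref{lemm:ht-loc}.)

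With both hypotheses in place, Theorem~\ref{thm:main:abstract} yields that $Y$ has WHP over $K$, completing the proof. No real obstacle stands in the way: the substantive content has already been packaged into Theorem~\ref{thm:main:abstract}, whose heart is the vertical-covers structure result Proposition~\ref{prop:split-neg}, and into the SGA~1 homotopy exact sequence that feeds Corollary~\ref{coro:SGA1ht}. The only mild subtlety worth recording is the identification $Y(Z)^{(1)} = Y_{R(Z)}(R(Z))$, which rests precisely on the properness of $f$ and is the single place where properness (rather than just smoothness of $f$) is actively used in the reduction to Theorem~\ref{thm:main:abstract}.
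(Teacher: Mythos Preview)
Your proposal is correct and follows essentially the same approach as the paper's own proof: verify $Y(Z)^{(1)}=Y(R(Z))$ via properness (Proposition~\ref{prop:voj:proper}), deduce non-strong-thinness and $Y(R(Z))\neq\emptyset$ from the WHP assumption, obtain condition (EP) from Corollary~\ref{coro:SGA1ht}, and then apply Theorem~\ref{thm:main:abstract}. The only differences are expository (you spell out the role of Proposition~\ref{prop:voj:proper} and the content behind Corollary~\ref{coro:SGA1ht}), not mathematical.
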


\begin{proof} We have $Y(Z)^{(1)}=Y(R(Z))$ because $f$ is proper. 
As $Y_{R(Z)}$ has WHP over $R(Z)$ it follows that $Y(Z)^{(1)}$ is not strongly thin in $Y_{R(Z)}$. In particular $Y(R(Z))\neq \emptyset$. 
Thus $f$ satisfies condition (EP) (cf. Corollary \ref{coro:SGA1ht}, Fact \ref{fact:SGA1ht}) and all fibres of $f$ are geometrically connected. 
Now Theorem \ref{thm:main:abstract} implies the assertion. 
\end{proof}

\begin{coro} \label{coro:as2} 
Let $K$ be a field of characteristic zero. Let $Z$ be a connected smooth $K$-variety, $F=R(Z)$ the function field of $Z$  and $A/Z$ an abelian scheme. 
Assume that $A(F)$ is Zariski dense in $A_{F}$
and that $Z$ has HP over $K$. If the Chow trace $\mathrm{Tr}_{\oF/\oK}(A_\oF)$ is zero, then $A$ has WHP over $K$. 
\end{coro}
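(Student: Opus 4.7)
The plan is to reduce to Corollary \ref{coro:main:smoothproper} (which is Theorem \ref{thm:1}) by checking that the structure morphism $f: A \to Z$ satisfies all its hypotheses, where the generic-fibre assumption is supplied by Javanpeykar's recent result \cite[Theorem B]{Jav2} on traceless abelian varieties over function fields.

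First I would verify the geometric setup: the abelian scheme $A \to Z$ is smooth and proper, and its fibres are geometrically connected (as abelian varieties). Since $Z$ is smooth over $K$ and $A$ is smooth over $Z$, the $K$-variety $A$ is smooth. Connectedness of $A$ follows because $A \to Z$ is an open surjection (flat, finite type) with connected fibres and connected base $Z$. Thus $A$ and $Z$ are connected smooth $K$-varieties linked by a smooth proper morphism, fulfilling the structural hypotheses of Corollary \ref{coro:main:smoothproper}.

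Next I would apply Javanpeykar's theorem \cite[Theorem B]{Jav2} to the generic fibre $A_F$. The hypotheses of that theorem match exactly the hypotheses placed on $A_F$ in Corollary \ref{coro:as2}: $F$ is a finitely generated (function) field of characteristic zero, $A_F$ is an abelian variety over $F$ with $A(F) = A_F(F)$ Zariski dense, and the Chow trace $\mathrm{Tr}_{\oF/\oK}(A_\oF)$ vanishes. The conclusion of \cite[Theorem B]{Jav2} is precisely that $A_F$ has WHP over $F$.

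Finally I would feed the data into Corollary \ref{coro:main:smoothproper}: $Z$ has HP over $K$ by hypothesis, the generic fibre $A_F = A_{R(Z)}$ has WHP over $R(Z)$ by the previous step, and $f: A \to Z$ is smooth and proper; the corollary then yields that $A$ has WHP over $K$, as desired. No step is genuinely delicate here once the prior results are in hand; the only point to be careful about is that Javanpeykar's hypothesis (phrased over an arbitrary finitely generated field of characteristic zero together with the Chow-trace condition) is applied to the function field $F = R(Z)$, which is indeed finitely generated over the prime field since $Z$ is of finite type over $K$ and $K$ has characteristic zero.
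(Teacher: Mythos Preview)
Your proposal is correct and follows exactly the paper's approach: note that $f:A\to Z$ is smooth and proper, invoke \cite[Theorem B]{Jav2} to obtain WHP for the generic fibre $A_F$, and conclude via Corollary~\ref{coro:main:smoothproper}.

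One small slip in your final paragraph deserves correction: you assert that $F=R(Z)$ is finitely generated over the prime field, but the statement of Corollary~\ref{coro:as2} allows $K$ to be an \emph{arbitrary} field of characteristic zero (for instance $K=\Cc$), in which case $F$ is certainly not finitely generated over $\Qq$. The hypothesis of \cite[Theorem B]{Jav2} is that $F$ is a function field over some field $K$ of characteristic zero (i.e.\ $F/K$ is finitely generated) together with the vanishing of the Chow trace $\mathrm{Tr}_{\oF/\oK}(A_{\oF})$; no finite generation over $\Qq$ is needed. So the application of Javanpeykar's theorem is legitimate, but for a different reason than the one you gave.
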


\begin{proof} The morphism $f: A\to Z$ is smooth and proper. The generic fibre $A_{F}$ has WHP over $F$ by \cite[Theorem B]{Jav2}.
The assertion hence follows by Corollary \ref{coro:main:smoothproper}. 
\end{proof}

\begin{coro} \label{coro:as} Let $K$ be a finitely generated field of characteristic zero. Let $Z$ be a connected smooth $K$-variety,
$F=R(Z)$ the function field of $Z$ and $A/Z$ an abelian scheme. 
Assume that $A(F)$ is Zariski dense in $A_{F}$ and  that $Z$ has HP over $K$. Then $A$ has WHP over $K$. 
\end{coro}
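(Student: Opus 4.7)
The plan is to reduce Corollary \ref{coro:as} directly to Corollary \ref{coro:main:smoothproper} (equivalently, Theorem \ref{thm:1}) by inputting the main abelian variety result of Corvaja--Demeio--Javanpeykar--Lombardo--Zannier on the generic fibre. The structure morphism $f : A \to Z$ of the abelian scheme is smooth and proper, and $Z$ has HP over $K$ by hypothesis, so it suffices to verify that the generic fibre $A_F$ has WHP over $F = R(Z)$.

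First I would check the base field hypothesis: since $K$ is a finitely generated field of characteristic zero and $Z$ is a $K$-variety of finite type, the function field $F = R(Z)$ is finitely generated over $\Qq$. Second, $A_F$ is an abelian variety over $F$, and by assumption $A(F)$ is Zariski dense in $A_F$. These are precisely the hypotheses of \cite[Thm.~1.3]{CDJLZ}, which asserts that every abelian variety with Zariski dense rational points over a finitely generated field of characteristic zero satisfies WHP. Applying that theorem, $A_F$ has WHP over $F$.

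With $Z$ connected and smooth (given), $A$ connected and smooth (as the total space of an abelian scheme over a smooth connected base), $f$ smooth and proper, $Z$ satisfying HP over $K$, and $A_{R(Z)} = A_F$ satisfying WHP over $R(Z)$, all the hypotheses of Corollary \ref{coro:main:smoothproper} are met. The conclusion that $A$ has WHP over $K$ follows at once.

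The proof contains essentially no obstacle of its own: the two deep inputs, namely \cite[Thm.~1.3]{CDJLZ} for the generic fibre and Corollary \ref{coro:main:smoothproper} for the fibration, do all the work. The merit of the corollary, as noted in the introductory discussion, is that this combination now treats possibly non-constant abelian schemes over HP bases, which were not accessible from the mixed fibration theorem of Luger \cite{Lug4} alone.
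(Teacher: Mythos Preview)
Your proof is correct and follows exactly the same approach as the paper: invoke \cite{CDJLZ} to get WHP for the generic fibre $A_F$ over the finitely generated field $F=R(Z)$, then apply Corollary \ref{coro:main:smoothproper} to the smooth proper morphism $f:A\to Z$. The only difference is that you spell out a few routine verifications (finite generation of $F$, smoothness and connectedness of $A$) that the paper leaves implicit.
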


\begin{proof} The morphism $f: A\to Z$ is smooth and proper. The generic fibre $A_{R(Z)}$ has WHP over $R(Z)$ by the main result of \cite{CDJLZ}.
The assertion hence follows by Corollary \ref{coro:main:smoothproper}. 
\end{proof}

\begin{defi} Let $K$ be a field and $Z/K$ a variety. Let $A/Z$ be an abelian scheme. 
We say that $A$ is {\bf $Z/K$-constant} if there exists an abelian variety $B/K$ such that $A\cong B\times_K Z$. 
\end{defi}

The content of the following Remark was pointed out by Arian Javanpeykar. 

\begin{rema} Let $K$ be a finitely generated field of characteristic zero. Let $Z$ be a connected smooth $K$-variety
and $A/Z$ an abelian scheme. 
Assume that $A(K)$ is Zariski dense in $A$ and that $Z$ has HP over $K$. 
\begin{enumerate}
\item[(a)] If $A$ is $Z/K$-constant, then the mixed fibration theorem of Luger (cf. \cite{Lug4}) implies that $A$ has WHP over $K$. 
\item[(b)] If $\pi_1(Z_\oK)=1$, then $A$ is $Z/K$-constant by the following Proposition. 
\item[(c)] If $Z/K$ is proper, then $\pi_1(Z_\oK)=1$ by a theorem of Corvaja and Zannier (cf. \cite{CDJLZ}). 
\end{enumerate}
\end{rema}

Thus the main merit of Corollary \ref{coro:as} lies in its ability to prove WHP for certain non-constant abelian schemes.

\begin{prop} \label{prop:ascheme} Let $K$ be a field of characteristic zero. Let $Z$ be a connected smooth $K$-variety and $A/Z$ an abelian scheme. 
If $\pi_1(Z_\oK)=1$ and $Z(K)\neq \emptyset$, then $A$ is $Z/K$-constant. 
\end{prop}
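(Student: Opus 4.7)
The plan is to produce a $Z$-isomorphism $B \times_K Z \xrightarrow{\sim} A$ for $B := A_{z_0}$ with $z_0 \in Z(K)$ by constructing a section of the Isom-scheme of abelian $Z$-schemes, then using $\pi_1(Z_{\oK}) = 1$ to trivialise the resulting étale cover.

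First I fix $z_0 \in Z(K)$ and set $B := A_{z_0}$, an abelian variety over $K$. Consider the functor on $Z$-schemes $T \mapsto \underline{\mathrm{Isom}}_T^{\mathrm{ab}}(B \times_K T,\, A \times_Z T)$, classifying isomorphisms of abelian $T$-schemes (i.e.\ group isomorphisms respecting the zero sections). By the standard rigidity theory of abelian schemes over a noetherian normal base, due to Grothendieck (see SGA 7 Exp.\ VIII, or Faltings--Chai, \emph{Degeneration of Abelian Varieties}, Ch.\ I \S2), this functor is representable by a $Z$-scheme $\mathcal{I}$, and every connected component of $\mathcal{I}$ is a finite étale $Z$-scheme. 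The identity $\mathrm{id}_B\colon B \to B = A_{z_0}$ defines a $K$-rational point $\sigma$ of $\mathcal{I}$ lying over $z_0$. Let $W$ denote the connected component of $\mathcal{I}$ containing $\sigma$; then $W \to Z$ is a connected finite étale cover equipped with the $K$-point $\sigma$ above $z_0$.

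Next I base-change to $\oK$. Because $\pi_1(Z_{\oK}) = 1$, every connected finite étale cover of $Z_{\oK}$ is trivial, so $W_{\oK} = \coprod_{i=1}^{n} W_i$ with each $W_i \to Z_{\oK}$ an isomorphism. Let $\Gamma := \Gal(\oK/K)$. As $W$ is $K$-connected, $\Gamma$ permutes $\{W_1,\dots,W_n\}$ transitively (otherwise a proper Galois-stable union would descend by Galois descent to a proper open-and-closed subscheme of $W$). Hence $\Gamma$ also acts transitively on the fibre $W_{z_0,\oK} = \{p_1,\dots,p_n\}$, where $p_i$ is the unique $\oK$-point of $W_i$ lying over $z_0$. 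The existence of the $\Gamma$-fixed point $\sigma \in W_{z_0}(K) = W_{z_0,\oK}(\oK)^{\Gamma}$ therefore forces $n = 1$, so $W \to Z$ is an isomorphism. Its inverse $Z \to W \hookrightarrow \mathcal{I}$ is a global section of $\mathcal{I} \to Z$, which by the universal property of $\mathcal{I}$ is precisely a $Z$-isomorphism $B \times_K Z \xrightarrow{\sim} A$ of abelian schemes. Hence $A$ is $Z/K$-constant.

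The main obstacle, on which the entire argument rests, is the representability statement for $\mathcal{I}$ together with the finiteness of its connected components over $Z$: this is the rigidity heart of the argument and is where the normality of $Z$ and the characteristic-zero hypothesis enter. Once this black box is granted, the remainder is a purely formal Galois-descent driven by $\pi_1(Z_{\oK}) = 1$ and the presence of the basepoint $z_0 \in Z(K)$; observe that both hypotheses are genuinely used, since without a $K$-rational base point one would only obtain a $Z/\oK$-constant abelian $\oK$-scheme, while without $\pi_1(Z_{\oK}) = 1$ the cover $W \to Z$ could be non-trivial.
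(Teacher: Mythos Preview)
Your proof is correct and takes a genuinely different route from the paper's. The paper argues via Tate modules: fixing $z\in Z(K)$ and $B=A_z\times_K Z$, the identity on the fibre induces a $\Gal(\oK/K)$-equivariant isomorphism $T_\ell(A)\cong T_\ell(B)$; since $\pi_1(Z_{\oK})=1$ the section $z_*$ identifies $\pi_1(Z,\oz)$ with $\Gal(\oK/K)$, so this isomorphism is in fact $\pi_1(Z,\oz)$-equivariant, and Grothendieck's 1966 extension theorem for homomorphisms of abelian schemes then promotes the fibrewise identity to a global isomorphism $A\cong B$. You instead work directly with the Isom-scheme, using that its connected components are finite \'etale over $Z$, and run a short Galois-descent argument exploiting $\pi_1(Z_{\oK})=1$ together with the $K$-rational basepoint to force the relevant component to have degree~$1$.

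The two arguments invoke closely related black boxes---Grothendieck's extension theorem is essentially a reformulation of the \'etaleness of the Hom-scheme---but they are organised differently. The paper's approach makes the role of the fundamental group more explicit and linear (the key step is literally that the section $z_*$ becomes an isomorphism), while yours is more geometric and makes the descent from $\oK$ to $K$ transparent. Your version has the minor expository advantage that it does not pass through Tate modules at all; on the other hand, the paper's citation to a single classical theorem is arguably cleaner than invoking the full representability-plus-\'etaleness package for Isom-schemes. One small point worth making explicit in your write-up: $Z_{\oK}$ is connected because $Z$ is connected with a $K$-point, which you use when splitting $W_{\oK}$ into copies of $Z_{\oK}$.
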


\begin{proof} Let $\ell$ be a rational prime. We choose $z\in Z(K)$ and and a geometric point $\oz: \Spec(\Omega)\to Z$ localized at $z$. 
The Tate module $T_\ell(A):=\plim_{n\in\Nn} A[\ell^n]_\oz(\Omega)$ is a $\pi_1(Z, \oz)$-module in a natural way. The point $z: \Spec(K)\to Z$ 
induces a section $z_*: \pi_1(z, \oz)\to \pi_1(Z, \oz)$ of the right hand map in the short exact sequence
$$1\to \pi_1(Z_\oK, \oz)\to \pi_1(Z, \oz)\to \pi_1(z, \oz)\to 1$$
and $T_\ell(A)$ thus becomes a $\pi_1(z, \oz)$-module via $z_*$. Consider the abelian scheme  $B=A_z\times_K Z$ over $Z$. 
The isomorphism $u_z=\mathrm{Id}_{A_z}: A_z\to B_z$ induces a $\pi_1(z, \oz)$-equivariant isomorphism $u_\ell: T_\ell(A)\to T_\ell(B)$. As $\pi_1(Z_\oK, \oz)=1$ by our assumption it follows that $z_*$ is an isomorphism and thus $u_\ell$ is even $\pi_1(Z, \oz)$-equivariant. 
Now \cite{groth} implies that the isomorphism $u_z: A_z\to B_z$ is induced by an isomorphism $u: A\to B$ of abelian schemes.
\end{proof}

\section{Proof of Theorem \ref{thm:2}}
In this section we shall frequently work within the following setup.

\begin{setup}\label{setup2}
Let $S$ be a Hermite-Minkowski scheme and $K=R(S)$ its function field. Let $\YY$ and $\ZZ$ be $S$-varieties, $F: \YY\to \ZZ$ an $S$-morphism, $Y=\YY_K$, $Z=\ZZ_K$ and $f=F_K$. Assume that $Y$ and $Z$ are normal connected $K$-varieties, that the morphism $f: Y\to Z$ is normal and that all fibres of $f$ are geometrically connected. Let $\Sigma$ be a subset of $\YY(S)^{(1)}$ and let $\Xi$ be the set of all $z\in \ZZ(S)^{(1)}$ such that $\Sigma\cap Y_z(K)$ is not strongly thin. Let $\Xi_0$ be the set of all $z\in\ZZ(S)^{(1)}$ such that $\YY_z$ has WHP over $S$. 
\end{setup}

For every point $z\in \ZZ(S)^{(1)}$ we denote by $D(z)$ the largest open subset of $S$ such that $z$ extends to an element $\hat{z}$ of $\ZZ(D(z))$. We let $\YY_z=\YY\times_{\ZZ, F, \hat{z}} D(z)$ and $Y_z=Y\times_{Z,f,z} \Spec(K)$ and note that $Y_z$ is the generic fibre of $\YY_z\to D(z)$. 
Note that $D(z)$ contains all codimension $1$ points of $S$ and thus $\YY_z(S)^{(1)}=\YY_z(D(z))^{(1)}=Y_z(K)\cap \YY(S)^{(1)}$. 
Of course we often simply write $z$ instead of $\hat{z}$. 

\begin{lemm} \label{lemm:weil} 
In the situation of Setup \ref{setup2} let $X$ be a normal connected $K$-variety and $g: X\to Y$ a finite surjective $K$-morphism. Assume that $g$ is vertical and ramified. Consider the subset 
$\Gamma=g(X(K))\cap \YY(S)^{(1)}$ of $Y(K)$. If $f$ satisfies condition (EP), then $f(\Gamma)$ is strongly thin in $Z$. 
\end{lemm}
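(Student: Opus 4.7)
The strategy is to exploit the structural information about vertical ramified covers provided by Corollary \ref{coro:split-neg}. Since $g$ is vertical and ramified and $f$ satisfies (EP), that corollary supplies normal connected $K$-varieties $X'$ and $Z'$, a finite \'etale morphism $h: X'\to X$, a finite surjective ramified morphism $u: Z'\to Z$, and a dominant $Z$-morphism $\ell: X'\to Z'$. Since $\ell$ is a $Z$-morphism we have $f\circ g\circ h=u\circ \ell$, so whenever we can lift an $x\in X(K)$ to an $x'\in X'(E)$ for some finite extension $E/K$, we obtain $f(g(x))=u(\ell(x'))\in u(Z'(E))$.

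The main step is to effect such a lifting for every $x$ arising from a point of $\Gamma$, and with a single extension $E/K$ that does not depend on $x$. For this I would spread out $X,X',Z'$ and the morphisms $g,h,u,\ell$ to $S$-data $\XX,\XX',\ZZ'$ and $G,H,U,L$ over a suitable open subscheme $S_0\in S_\Zar^{(1)}$ on which $G$ and $U$ remain finite surjective and $H$ remains finite \'etale surjective; the shrinkage to $S_0$ does not affect the Hermite--Minkowski property nor the sets $\YY(S)^{(1)}$ and $\ZZ(S)^{(1)}$. For $\gamma\in\Gamma$, write $\gamma=g(x)$ with $x\in X(K)$. Since $G: \XX\to\YY$ is finite, hence proper, and $G(x)=\gamma\in\YY(S)^{(1)}$, Proposition \ref{prop:voj:proper} forces $x\in\XX(S)^{(1)}$. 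Now apply Proposition \ref{prop:hm} to the finite \'etale surjective morphism $H: \XX'\to\XX$: it yields a finite extension $E/K$, depending only on $H$, together with $x'\in X'(E)$ such that $h(x')=x$. Combining this with the identity $u\circ\ell=f\circ g\circ h$ gives $f(\gamma)=u(\ell(x'))$. Hence $f(\Gamma)\subset u(Z'(E))\cap Z(K)$.

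To finish I would invoke the recent \cite[Lemma 2.1]{BFP3}: since $u: Z'\to Z$ is a finite surjective ramified morphism of normal connected $K$-varieties and $E/K$ is a finite extension, the subset $u(Z'(E))\cap Z(K)$ of $Z(K)$ is strongly thin in $Z$. This completes the argument. The only delicate ingredient is this very last step; the production of the fixed extension $E$ falls out cleanly from the two central features of Hermite--Minkowski schemes (Propositions \ref{prop:voj:proper} and \ref{prop:hm}), and it is precisely the ability to cite \cite[Lemma 2.1]{BFP3} at the end that yields the simplification over the corresponding arguments of \cite{CDJLZ} and \cite{Lug3}, where the analogous twisting-by-$E/K$ step had to be carried out explicitly in the product setting.
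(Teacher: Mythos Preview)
Your proof is correct and essentially identical to the paper's argument: invoke the structural result on vertical ramified covers, spread out to $S$-models so that Propositions \ref{prop:voj:proper} and \ref{prop:hm} apply, and finish with \cite[Lemma 2.1]{BFP3}. One harmless imprecision: spreading out in general only yields a dense open $S_0\subset S$, not necessarily one in $S_\Zar^{(1)}$, but this does not matter since shrinking $S$ can only enlarge $\YY(S)^{(1)}$ and hence $\Gamma$, and $S_0$ remains a Hermite--Minkowski scheme.
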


\begin{proof} By Corollary \ref{coro:split-neg} there exist normal connected $K$-varieties $X'$ and $Z'$, a finite \'etale morphism $h: X'\to X$, a finite
surjective ramified morphism $u: Z'\to Z$ and a dominant $Z$-morphism $w: X'\to Z'$. 
After replacing $S$ by one of its open subschemes the $K$-varieties $X$ and $X'$ extend to $S$-varieties $\XX$ and $\XX'$, and the $K$-morphisms $g$ and $h$ extend to $S$-morphisms $G: \XX\to \YY$ and $H: \XX'\to \XX$. After replacing $S$ once more we can assume that $G$ is finite and surjective and $H$ is finite \'etale surjective. Proposition \ref{prop:hm} implies that there exists a finite extension $E/K$ such that 
$\XX(S)^{(1)}\subset h(X'(E))\cap X(K)$. 
The set $u(Z'(E))\cap Z(K)$ is strongly thin in $Z$ by \cite[Lemma 2.1]{BFP3}.
It is hence enough to prove that $f(\Gamma)\subset u(Z'(E))\cap Z(K)$. Let $z\in f(\Gamma)$. It is clear that $z\in Z(K)$. We prove that 
$z\in u(Z'(E))$: There exists $x\in X(K)$ such that $g(x)\in \YY(S)^{(1)}$ and such that $f(g(x))=z$. 
By Proposition \ref{prop:voj:proper} we have 
$x\in \XX(S)^{(1)}$. 
By our choice of $E$ there is $x'\in X'(E)$ such that $h(x')=x$. It follows that 
$z=f(g(h(x')))=u(w(x'))\in u(Z'(E))$, as desired. 
\end{proof}

\begin{thm} \label{thm:iwhp} Consider the situation of Setup \ref{setup2}. Assume that $f$ satisfies condition (EP).   
If $\Xi$ is not strongly thin in $Z$, then
$\Sigma$ is not strongly thin in $Y$. 
\end{thm}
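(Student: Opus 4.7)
The plan is to generalize the strategy from Theorem \ref{thm:main:abstract} to the setting of Setup \ref{setup2}. Given any witness of strong thinness of $\Sigma$ in $Y$---that is, a proper closed subset $C\subsetneq Y$ and a finite family $(g_i\colon X_i\to Y)_{i\in I}$ of finite surjective ramified morphisms of normal connected $K$-varieties with $\Sigma\subset C(K)\cup \bigcup_i g_i(X_i(K))$---the goal is to derive a contradiction by exhibiting a point of $\Sigma$ outside this set. As in Theorem \ref{thm:main:abstract}, I partition $I$ into a vertical part $I^v$ and a horizontal part $I^h$ over $f$, in the sense of Setup \ref{setup}; note that $(Y,Z,f)$ together with each $(X_i,g_i)$ satisfy all the hypotheses of that setup.

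For each horizontal $g_i$ ($i\in I^h$), Proposition \ref{prop:horizontal} yields a non-empty open $V_i\subset Z$ such that $g_{i,z}(X_{i,z}(k(z)))$ is strongly thin in $Y_z$ for every $z\in V_i$. For each vertical ramified $g_i$ ($i\in I^v$), Lemma \ref{lemm:weil}---whose hypothesis (EP) is precisely what is assumed here---implies that the pushdown $T_i:=f(g_i(X_i(K))\cap \YY(S)^{(1)})$ is strongly thin in $Z$. To handle the closed subset $C$, I use that the normal morphism $f$ is flat and therefore open: $V_C:=f(Y\setminus C)$ is a non-empty open subset of $Z$, and for each $z\in V_C$ the intersection $C\cap Y_z$ is a proper closed subset of $Y_z$.

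Set $V:=V_C\cap \bigcap_{i\in I^h}V_i$, a finite intersection of non-empty opens in the irreducible variety $Z$, hence non-empty and open, and define $\Xi':=(\Xi\cap V)\setminus \bigcup_{i\in I^v}T_i$. Removing from the non-strongly-thin set $\Xi$ the complement of $V$ (a proper closed subset of $Z$) together with the finitely many strongly thin sets $T_i$ leaves $\Xi'$ still not strongly thin in $Z$, and in particular non-empty; pick any $z\in \Xi'$. By definition of $\Xi$, the set $\Sigma\cap Y_z(K)$ is not strongly thin in $Y_z$, whereas by the previous paragraph $(C\cap Y_z)(K)\cup \bigcup_{i\in I^h}g_{i,z}(X_{i,z}(K))$ is strongly thin in $Y_z$. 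Choose $y$ in the former and outside the latter. Then automatically $y\notin g_i(X_i(K))$ also for every $i\in I^v$: otherwise $y\in g_i(X_i(K))\cap \Sigma\subset g_i(X_i(K))\cap \YY(S)^{(1)}$, so $z=f(y)\in T_i$, contradicting the choice $z\in \Xi'$. Hence $y\in\Sigma$ lies outside $C(K)\cup \bigcup_i g_i(X_i(K))$, as required.

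The main technical ingredient is Lemma \ref{lemm:weil}, which packages the geometric structure of vertical ramified covers (Proposition \ref{prop:split-neg}) together with the Hermite--Minkowski extension-of-integral-points mechanism (Proposition \ref{prop:hm} and \cite[Lemma 2.1]{BFP3}) into the clean strong-thinness pushdown on $Z$. Once this pushdown is available, the rest is a Fubini-style combination: avoid finitely many strongly thin bad loci downstairs on $Z$ to pick $z$, then exploit non-strong-thinness of $\Sigma\cap Y_z(K)$ in the fibre to pick $y$. The only subtlety to watch is the closed subset $C$, which one reduces to a fibrewise proper closed subset by the openness of $f$.
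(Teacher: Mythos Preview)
Your proof is correct and follows essentially the same approach as the paper's: partition the covers into horizontal and vertical, use Proposition \ref{prop:horizontal} for the horizontal ones to get strong thinness fibrewise over a dense open, use Lemma \ref{lemm:weil} for the vertical ones to push down to a strongly thin set in $Z$, pick $z\in\Xi$ avoiding these bad loci, and then pick $y$ in the fibre. Your justification that $f(Y\setminus C)$ is open via ``$f$ normal $\Rightarrow$ flat of finite type $\Rightarrow$ open'' is in fact slightly more accurate than the paper's own phrasing, which invokes smoothness of $f$ even though Setup \ref{setup2} only assumes $f$ normal.
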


\begin{rema} \label{rema:iwhp} Consider the situation of Theorem \ref{thm:iwhp}
\begin{enumerate}
\item[(a)] If $f$ is proper, smooth and has a section, then $f$ satisfies condition (EP) by Fact \ref{fact:SGA1ht}, and thus the theorem applies.
\item[(b)] If $f$ is proper, then $\Xi_0=\{z\in \ZZ(S)^{(1)}|\mbox{$Y_z$ has WHP over $K$}\}$. 
\end{enumerate} 
\end{rema}

\begin{proof}[Proof of Theorem \ref{thm:iwhp}]
Let $(X_i)_{i\in I}$ be a finite family of normal connected $K$-varieties and $(g_i: X_i\to Y)_{i\in I}$ a finite family of 
finite surjective ramified morphisms and $C$ a proper closed subset of $Y$. 
It is enough to show that $\Sigma$ is not contained in $C(K)\cup\bigcup_{i\in I} g_i(X_i(K))$. 
Let $V=Y\setminus C$. Let $\Gamma_i=g_i(X_i(K))\cap \YY(S)^{(1)}$. 
As $\Sigma\subset \YY(S)^{(1)}$ it is enough to prove that $(\Sigma\cap V(K))\setminus \left( \bigcup_{i\in I} \Gamma_i\right)$ is not empty. 

Let $I^h$ (resp. $I^v$) be the set of all $i\in I$ such that 
$g_i$ is horizontal (resp. vertical) over $f$. We put $\Gamma_i(z):=\Gamma_i \cap Y_z(K)$,  $\Gamma^h(z)=\bigcup_{i\in I^h} \Gamma_i(z)$,
$\Gamma^v(z)=\bigcup_{i\in I^v} \Gamma_i(z)$ and $\Gamma(z)=\bigcup_{i\in I} \Gamma_i(z)$. 

By Proposition \ref{prop:horizontal} there exists a non-empty open subset $U$ of $Z$ such that
for every $z\in U(K)$ and all $i\in I^h$ the $K$-variety $Y_z$ is connected and the set
$\Gamma_i(z)$ is strongly thin in $Y_z$. Thus $\Gamma^h(z)$ is strongly thin in $Y_z$ for every $z\in U(K)$ (*). 
As $f$ is smooth $f(V)$ is open in $Z$. Let $W=f(V)\cap U$. 
By Lemma \ref{lemm:weil} the set $\Delta:=\bigcup_{i\in I^v} f(\Gamma_i)$ is strongly thin 
in $Z$. We can pick $z\in (\Xi\cap W(K))\setminus \Delta$ because $\Xi$ is not strongly thin in $Z$. 

From $z\in W(K)\subset U(K)$ and from (*) we conclude that $\Gamma^h(z)$ is strongly thin in the connected $K$-variety $Y_z$. From $z\notin \Delta$ we conclude that 
$\Gamma^v(z)=\emptyset$. Hence $\Gamma(z)$ is strongly thin in $Y_z$. Furthermore the open subscheme $V_z$ of $Y_z$ is 
not empty because $z\in W\subset f(V)$. Finally we have $z\in \Xi$ and thus $\Sigma\cap Y_z(K)$ is not strongly thin in $Y_z$. Hence we can choose a point $y\in (\Sigma\cap V_z(K))\setminus \Gamma(z)$. That point $y$ lies in $(\Sigma\cap V(K))\setminus \left( \bigcup_{i\in I} \Gamma_i\right)$. 
\end{proof}



As a by-product we can quickly establish the following two product theorems over Hermite-Minkowski schemes $S$. They are already known in the case where $S$ is of finite type over $\Spec(\Zz)$ (cf. \cite[Thm. 1.4]{Lug3}, \cite[Thm. 1.9]{CDJLZ}), and one could prove them equally well, proceeding along the lines of the original proofs in \cite{Lug3} and \cite{CDJLZ} to establish them. We include them because we think they might be of use in future work, when studying Hilbertianity questions over fields $K$ of characteristic zero that are not finitely generated, as for example in \cite{Jav2}.

\begin{coro} \label{coro:pt1} Let $S$ be a Hermite-Minkowski scheme and $K=R(S)$. Let $\ZZ_1$ and $\ZZ_2$ be $S$-varieties. If $\ZZ_1$ and $\ZZ_2$ both have WHP integrally over $S$, then $\ZZ_1\times_S \ZZ_2$ has WHP integrally over $S$. 
\end{coro}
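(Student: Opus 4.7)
The plan is to deduce Corollary \ref{coro:pt1} from Theorem \ref{thm:iwhp} applied to the projection $F: \YY := \ZZ_1 \times_S \ZZ_2 \to \ZZ_2$ with $\Sigma := \YY(S)^{(1)}$. Write $Z_1 = \ZZ_{1,K}$, $Z_2 = \ZZ_{2,K}$, $Y = Y_1 \times_K Z_2$, $f = F_K$. First I would verify that the hypotheses of Setup \ref{setup2} are met: since $\ZZ_1$ and $\ZZ_2$ have WHP integrally over $S$, both $Z_1$ and $Z_2$ are normal and connected, and both admit a $K$-rational point, so both are geometrically integral. Consequently $Y = Z_1 \times_K Z_2$ is integral (hence connected) and normal (since $f$ is flat with geometrically normal fibres, which is a normal morphism in the EGA sense; apply \cite[6.8.3]{EGAIV2}). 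All fibres of $f$ are $K$-isomorphic to $Z_1$ and thus geometrically connected. By Lemma \ref{lemm:EPprod}, the projection $f$ satisfies condition (EP).

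Next I would compute the set $\Xi$ from Setup \ref{setup2}. Fix $z \in \ZZ_2(S)^{(1)}$, and let $\hat{z} \in \ZZ_2(D(z))$ be its extension. Then $\YY_z = \ZZ_1 \times_S D(z)$ and the generic fibre $Y_z$ is canonically $K$-isomorphic to $Z_1$. The key observation is that a $K$-point $y = (x,z)$ of $Y$ lies in $\YY(S)^{(1)}$ if and only if both $x$ lies in $\ZZ_1(S)^{(1)}$ and $z$ lies in $\ZZ_2(S)^{(1)}$; this follows from Proposition \ref{lemm:vojjav} together with the fact that, for every $U \in S_\Zar^{(1)}$, the universal property of the fibre product gives $\YY(U) = \ZZ_1(U) \times \ZZ_2(U)$. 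Hence under the isomorphism $Y_z \cong Z_1$, the set $\Sigma \cap Y_z(K) = \YY(S)^{(1)} \cap Y_z(K)$ corresponds exactly to $\ZZ_1(S)^{(1)}$, which is not strongly thin in $Z_1$ because $\ZZ_1$ has WHP integrally over $S$. Therefore $\Xi = \ZZ_2(S)^{(1)}$.

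Since $\ZZ_2$ has WHP integrally over $S$, the set $\ZZ_2(S)^{(1)} = \Xi$ is not strongly thin in $Z_2$. Theorem \ref{thm:iwhp} then yields that $\Sigma = \YY(S)^{(1)}$ is not strongly thin in $Y$, which is precisely the statement that $\ZZ_1 \times_S \ZZ_2$ has WHP integrally over $S$.

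I do not expect any serious obstacle here; the two substantive inputs (the product case of condition (EP) and the passage from fibrewise WHP to total-space WHP) have already been isolated in Lemma \ref{lemm:EPprod} and Theorem \ref{thm:iwhp}. The only mildly delicate point is the identification of near $S$-integral points on a product with pairs of near $S$-integral points on the factors, which is a direct consequence of the definition of near integrality in terms of extensions over open subschemes of $S$ containing all codimension $1$ points.
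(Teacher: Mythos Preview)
Your proposal is correct and follows essentially the same approach as the paper: apply Theorem \ref{thm:iwhp} to the projection $\ZZ_1\times_S\ZZ_2\to\ZZ_2$ with $\Sigma=\YY(S)^{(1)}$, verify Setup \ref{setup2} and condition (EP) via Lemma \ref{lemm:EPprod}, and observe that every $z\in\ZZ_2(S)^{(1)}$ lies in $\Xi$ because $\Sigma\cap Y_z(K)$ identifies with $\ZZ_1(S)^{(1)}$. The paper phrases the last step via $\Xi_0$ (noting that $\YY_z=\ZZ_1\times_S D(z)$ has WHP integrally over $S$ since $D(z)$ contains all codimension $1$ points of $S$), whereas you compute $\Xi$ directly through the product description of near-integral points; these are the same argument.
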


\begin{proof} Let $\YY=\ZZ_1\times_S \ZZ_2, \ZZ=\ZZ_2$ and $F: \YY\to \ZZ$ the projection of the fibre product. Let $Y=\YY_K, Z=\ZZ_K, Z_j=\ZZ_{j,K}$ and $f=F_K$. Then $Y=Z_1\times Z_2$ and $f: Y\to Z$ is the projection. This is a flat morphism. The variety $Z_j$ is normal connected $Z_j(K)\neq \emptyset$. Hence $Z_j$ is geometrically connected. For every $z\in Z$ the fibre $f^{-1}(z)$ is isomorphic to $Z_{1, k(z)}$. Thus the morphism $f$ is normal and all its fibres are geometrically connected. Let $\Xi_0$ be the set of all $z\in\ZZ(S)^{(1)}$ such that 
$\YY_z$ has WHP integrally over $S$. The data are as in Setup \ref{setup2}. Now $\YY_z=\ZZ_1\times_S D(z)$ and $D(z)$ is a non-empty open subscheme of $S$ that contains all codimension $1$ points of $S$. Thus $\YY_z$ has WHP integrally over $S$ for all $z\in \ZZ(S)^{(1)}$. It follows that $\Xi_0=\ZZ(S)^{(1)}$. Furthermore $\ZZ$ has WHP integrally over $S$. The morphism $f$ satisfies condition (EP) by Lemma \ref{lemm:EPprod}. Thus Theorem \ref{thm:iwhp} implies the assertion. 
\end{proof}

\begin{coro} \label{coro:pt2} Let $S$ be a Hermite-Minkowski scheme and $K=R(S)$. Let $Z_1$ and $Z_2$ be normal proper geometrically connected 
$K$-varieties. If $Z_1$ and $Z_2$ both have WHP over $K$, then $Z_1\times_S Z_2$ has WHP over $K$. 
\end{coro}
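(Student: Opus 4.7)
The plan is to reduce the corollary to Corollary \ref{coro:pt1} by spreading out $Z_1$ and $Z_2$ to proper $S'$-varieties over a suitable dense open subscheme $S' \subset S$, and then to exploit the fact that near integral points on proper schemes coincide with rational points on the generic fibre (Proposition \ref{prop:voj:proper}). Since $Z_j$ is proper and of finite type over $K = R(S)$, standard spreading-out arguments (EGA IV) yield a dense open subscheme $S' \subset S$ and proper $S'$-varieties $\ZZ_1, \ZZ_2$ with generic fibres $Z_1, Z_2$. Every dense open subscheme of a Hermite-Minkowski scheme is again Hermite-Minkowski (the condition on the fundamental group of dense opens is stable under passing to further dense opens), so $S'$ is Hermite-Minkowski.

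Next I would verify that each $\ZZ_j$ has WHP integrally over $S'$. The $K$-variety $Z_j = (\ZZ_j)_K$ is normal and connected by hypothesis. Since $\ZZ_j \to S'$ is proper and $S'$ is noetherian normal connected, Proposition \ref{prop:voj:proper} gives the equality $\ZZ_j(S')^{(1)} = Z_j(K)$. As $Z_j$ has WHP over $K$, the set $Z_j(K)$ is not strongly thin in $Z_j$, hence $\ZZ_j(S')^{(1)}$ is not strongly thin in $(\ZZ_j)_K$, which is exactly the definition of WHP integrally over $S'$.

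Now I would apply Corollary \ref{coro:pt1} to conclude that $\YY := \ZZ_1 \times_{S'} \ZZ_2$ has WHP integrally over $S'$. The generic fibre of $\YY$ is $Y := Z_1 \times_K Z_2$, which is normal (since $Z_1, Z_2$ are geometrically normal by Remark \ref{rema:basic}, and the product of two geometrically normal $K$-varieties is normal) and connected (since $Z_1, Z_2$ are geometrically connected, the product is geometrically connected, hence connected). The $S'$-variety $\YY$ is proper, so again Proposition \ref{prop:voj:proper} identifies $\YY(S')^{(1)}$ with $Y(K)$. Hence the non-strong-thinness of $\YY(S')^{(1)}$ in $Y$ is precisely the assertion that $Y(K)$ is not strongly thin in $Y$, i.e., $Y = Z_1 \times_K Z_2$ has WHP over $K$; the identification $Z_1 \times_S Z_2 = Z_1 \times_K Z_2$ uses that $\Spec(K) \to S$ is an epimorphism (so the fibre product over $S$ of two $K$-schemes agrees with the one over $K$).

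The only real subtlety is cosmetic: checking that spreading out preserves the Hermite-Minkowski property and that normality/geometric connectedness of the product are guaranteed; both are routine. Apart from these points, the argument is a formal consequence of the integral product theorem Corollary \ref{coro:pt1} together with the properness trick that turns near integral points into rational points.
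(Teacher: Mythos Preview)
Your proof is correct and follows essentially the same approach as the paper: spread out to proper $S'$-varieties over a dense open $S'\subset S$, use properness (Proposition \ref{prop:voj:proper}) to identify near integral points with rational points, apply Corollary \ref{coro:pt1}, and deduce WHP for the product over $K$. You are simply more explicit about the routine verifications (Hermite--Minkowski stability under dense opens, normality and connectedness of the product) that the paper leaves implicit.
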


\begin{proof} There exists a non-empty open subscheme $S'$ of $S$ so that $Z_1$ and $Z_2$ extend to proper $S$-varieties $\ZZ_1$ and $\ZZ_2$. The scheme $S'$ is a Hermite-Minkowski scheme. We have $\ZZ_j(S)^{(1)}=Z_j(K)$ by the properness. Thus $\ZZ_j$ has WHP integrally over $S$ for $j\in\{1,2\}$. By Corollary 
\ref{coro:pt1} it follows that $\ZZ_1\times_S \ZZ_2$ has WHP integrally over $S$. This implies that $Z_1\times_K Z_2$ has WHP over $K$.
\end{proof}

The following product theorem slightly generalizes \cite[Theorem 2.5.(1)]{Jav2}.

\begin{coro} \label{coro:pt} Let $k$ be field of characteristic zero. Assume that $\Gal(k)$ is topologically finitely generated (e.g. $k\in\{\Cc,\Rr,\Qq_p\}$). Let $K/k$ be a finitely generated transcendental field extension. Let $Z_1, Z_2$ be normal proper geometrically connected $K$-varieties. If $Z_1$ and $Z_2$ both have WHP over $K$, then $Z_1\times_K Z_2$ has WHP over $K$. 
\end{coro}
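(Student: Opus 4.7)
The plan is to reduce Corollary \ref{coro:pt} directly to Corollary \ref{coro:pt2} by exhibiting a Hermite-Minkowski scheme with function field $K$, so that the additional flexibility of the base ring $k$ in the hypothesis can be absorbed into the structural hypothesis on $S$ already handled in the previous result.

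First I would construct $S$. Since $K/k$ is finitely generated and transcendental, $K$ arises as the function field $R(S_0)$ of some integral $k$-variety $S_0$ of positive dimension. The field $k$ has characteristic zero and is therefore perfect, so the smooth locus of $S_0$ is a non-empty open subscheme. Taking a connected component of that smooth locus yields a smooth connected $k$-variety $S$ with $R(S) = K$. By hypothesis $\Gal(k)$ is topologically finitely generated, so Proposition \ref{prop:HM} applies and $S$ is a Hermite-Minkowski scheme with function field $K$.

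Second I would invoke Corollary \ref{coro:pt2} verbatim. Its hypotheses on $Z_1, Z_2$ match those of Corollary \ref{coro:pt} (normal, proper, geometrically connected $K$-varieties with WHP over $K$) and its hypothesis on the base is exactly that $S$ be a Hermite-Minkowski scheme with $R(S) = K$, which we just established. Noting the tautology $Z_1 \times_S Z_2 = Z_1 \times_K Z_2$ (since both $Z_i$ are $K$-varieties and pullback along $\Spec(K) \to S$ is identity on them), we conclude that $Z_1 \times_K Z_2$ has WHP over $K$.

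There is essentially no obstacle in this argument — all the technical content is in Corollary \ref{coro:pt2}, which itself rests on Corollary \ref{coro:pt1}, Theorem \ref{thm:iwhp}, and ultimately on Lemma \ref{lemm:key} and Proposition \ref{prop:HM}. The only nontrivial ingredient for the present deduction is that the smooth-variety model $S$ of $K$ over $k$ inherits the Hermite-Minkowski property from the topological finite generation of $\Gal(k)$, and this is exactly the content of Proposition \ref{prop:HM}.
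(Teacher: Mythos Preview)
Your proposal is correct and follows essentially the same approach as the paper: construct a smooth connected $k$-variety $S$ with function field $K$, apply Proposition \ref{prop:HM} to see that $S$ is a Hermite-Minkowski scheme, and then invoke Corollary \ref{coro:pt2}. Your additional details (taking the smooth locus and a connected component, and the remark that $Z_1\times_S Z_2 = Z_1\times_K Z_2$) are reasonable elaborations of steps the paper leaves implicit.
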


\begin{proof} There exists a smooth connected $k$-variety $S$ with function field $K$. By Proposition \ref{prop:HM} the scheme $S$ is a Hermite-Minkowski scheme. Thus the assertion follows with the help of Corollary \ref{coro:pt2}. 
\end{proof}

\section{Digression on \'etale homotopy theory}\label{sec:digression}
Making use of the higher \'etale homotopy theory of Artin and Mazur \cite{AM} and the long exact sequence of homotopy groups of Friedlander \cite{F1} we can weaken the properness assumption in Theorem \ref{thm:1} and Theorem \ref{thm:2} to some extent. 
The following definition stems from Friedlander's paper \cite{F1}. 

\begin{defi} \label{defi:fib}  Let $Y, Z$ be normal connected varieties over a field $K$.
Let $f: Y\to Z$ be a $K$-morphism. 
We say that $f$ is a {\bf special fibration} if $f$ is surjective and there exist a $Z$-scheme $\oY$, smooth and proper over $Z$, and 
irreducible closed subschemes $T_1,\cdots, T_n$ of $\oY$, each of pure relative codimension, with the property that every intersection
$T_{i_1}\cap \cdots \cap T_{i_k}$ is smooth over $Z$, and an $Z$-isomorphism $Y\to \oY\setminus \bigcup_{j=1}^n T_j$.
We say that $f$ is a {\bf geometric fibration} if there exists for every $z\in Z$ a non-empty open subscheme $U$ of $Z$ such that $z\in U$ and such that
$f_U: Y_U\to U$ is a special fibration. 
\end{defi}

Note that every geometric fibration is smooth and surjective. On the other hand, if $f: Y\to Z$ is smooth and proper, then $f$ is a geometric fibration.

\begin{defi} Let $Z$ be a connected variety over a field $K$ of characteristic zero. One calls $Z$ a {\bf $K(\pi, 1)$-scheme} if 
for every locally constant constructible abelian sheaf $F$ on $Z$ and some (hence every) geometric point $\oz$ of $Z$ the 
canonical maps 
$$H^q(\pi_1(Z, \oz), F_\oz)\to  H^q_\mathrm{et}(X, F)$$
are isomorphisms for all $q\ge 0$. 
\end{defi}

See \cite[Definition 4.1.1, Proposition 4.1.2]{aich} for various equivalent characterizations. 
I learnt the proof of the following theorem from Jakob Stix. 

\begin{thm}  \label{thm:stix}
Let $Y$ and $Z$ be connected normal varieties over a field $K$ of characteristic zero. Let $f: Y\to Z$ be a geometric fibration. Assume that all fibres of $f$ are geometrically connected. 
\begin{enumerate}
\item[(a)] If $Z$ is a $K(\pi, 1)$-scheme, then $f$ satisfies condition (EP). 
\item[(b)] If $f$ has a section, then $f$ satisfies condition (EP). 
\end{enumerate}
\end{thm}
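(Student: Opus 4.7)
The approach is to invoke Friedlander's long exact sequence of étale homotopy pro-groups associated to a geometric fibration \cite{F1}. For the geometric fibration $f: Y \to Z$ with geometrically connected fibres, and for any geometric point $\bar{z}$ of $Z$ together with a compatible geometric point $\bar{a}$ of the fibre $Y_{\bar z}$, Friedlander produces a fibration sequence $Y_{\bar z} \to Y \to Z$ in the pro-homotopy category and a long exact sequence
$$\cdots \to \pi_2(Y,i(\bar a)) \xrightarrow{f_*} \pi_2(Z,\bar z) \xrightarrow{\partial} \pi_1(Y_{\bar z},\bar a) \xrightarrow{i_*} \pi_1(Y,i(\bar a)) \xrightarrow{f_*} \pi_1(Z,\bar z) \to 1$$
of Artin--Mazur--Friedlander homotopy pro-groups (here for $n \ge 2$ these are pro-groups, while for $n=1$ they agree with the SGA1 fundamental groups since $Y$, $Z$ and $Y_{\bar z}$ are normal). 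I would apply this sequence with $\bar z = \oxi$ a geometric generic point of $Z$. Since $Y_\oxi$ is connected by the geometric connectedness hypothesis on the fibres of $f$, it suffices to show that $i_*$ is injective for a single well-chosen basepoint $\bar{a} \in Y_\oxi(k(\oxi))$; the result for an arbitrary basepoint follows by conjugation with an étale path in $Y_\oxi$. Exactness at $\pi_1(Y_\oxi, \bar a)$ shows that injectivity of $i_*$ is equivalent to the vanishing of the boundary map $\partial$.

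For part (a), assume that $Z$ is a $K(\pi,1)$-scheme. By definition, the Artin--Mazur étale homotopy type of $Z$ is then weakly equivalent to $K(\pi_1(Z),1)$, so the higher étale homotopy pro-groups of $Z$ vanish; in particular $\pi_2(Z,\oxi)=0$. Hence $\partial = 0$ and $i_*$ is injective, as required.

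For part (b), let $s: Z \to Y$ be a section of $f$ and take $\bar a := s(\oxi) \in Y_\oxi(k(\oxi))$ as basepoint. By functoriality of the étale homotopy type, $s$ induces maps $s_*: \pi_n(Z,\oxi) \to \pi_n(Y,s(\oxi))$ with $f_* \circ s_* = \mathrm{id}$ for every $n$; in particular $f_*: \pi_2(Y,s(\oxi)) \to \pi_2(Z,\oxi)$ is surjective. Exactness at $\pi_2(Z,\oxi)$ then forces $\partial = 0$, and injectivity of $i_*$ follows.

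The main subtlety is that Friedlander's long exact sequence has to be applied at the geometric generic point $\oxi$, rather than only at a geometric point lying over a closed point of $Z$; that this is legitimate amounts to the pro-fibration sequence being compatible with base change along any geometric point of $Z$, which works because $f$ remains a geometric fibration after every étale base change, so one can either invoke a limit argument over étale neighbourhoods of $\oxi$ or apply Friedlander's construction directly at $\oxi$. This technical point is where I would most expect to rely on the higher étale homotopy literature (and on Stix's expertise).
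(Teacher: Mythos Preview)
Your proposal is correct and follows essentially the same approach as the paper: both invoke Friedlander's long exact homotopy sequence for the geometric fibration, deduce in case (a) that $\pi_2(Z,\oxi)=0$ from the $K(\pi,1)$ hypothesis, and in case (b) use the section to split $f_*$ on $\pi_2$, in each case forcing the boundary map to vanish and hence $i_*$ to be injective. Your additional remarks on basepoint independence and on the legitimacy of applying Friedlander at the geometric generic point are careful elaborations that the paper leaves implicit.
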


\begin{proof} Let $\oxi$ be a geometric generic point of $Z$ and $a\in Y_\oxi(k(\oxi))$. Let $i: Y_\oxi=Y\times_Z \Spec(k(\oxi))\to Y$ be the 
projection of the fibre product. 
There is the 
long exact  homotopy sequence of Friedlander 
(cf. \cite[Theorem 4.4]{F1}). 
If $Z$ is a $K(\pi,1)$-scheme, then $\pi_2(Z, \oxi)$ is trivial (cf. \cite[Proposition 4.1.4]{aich}), and thus $\pi_1(Y_\oxi, \oa)\to \pi_1(Y, i(\oa))$ is injective. If $f$ has a section, then 
this induces a section of the morphism $\pi_2(Y, i(a))\to \pi_2(Z, \oxi)$ of pro-groups, and thus
$\pi_1(Y_\oxi, \oa)\to \pi_1(Y, i(\oa))$ must be injective also in that case. This finishes up the proof of (a) and (b). 
\end{proof}



\begin{coro} \label{coro:stix} 
Let $Y$ and $Z$ be connected smooth varieties over a field $K$ of characteristic zero. Let $f: Y\to Z$ be a geometric fibration. 
If $Y(R(Z))^{(1)}\neq \emptyset$, then all fibres of $f$ are geometrically connected and $f$ satisfies condition (EP). 
\end{coro}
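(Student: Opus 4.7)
The plan is to proceed along the lines of Corollary \ref{coro:SGA1ht}, substituting Theorem \ref{thm:stix}(b) for Fact \ref{fact:SGA1ht} in order to handle the non-proper case. Since $Y(R(Z))^{(1)}\neq \emptyset$, Proposition \ref{lemm:vojjav} yields a non-empty open subscheme $D\subseteq Z$ containing every codimension-$1$ point of $Z$, together with a section $s\in \Mor_Z(D,Y)$ of $f_D$.

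Next I would show that every fibre of $f$ is geometrically connected. As this property is local on the base, it suffices to treat the case in which $f$ is itself a special fibration $Y=\overline{Y}\setminus \bigcup_{j=1}^n T_j$, with $\overline{Y}\to Z$ smooth and proper and the $T_j$ irreducible of positive relative codimension. Each $T_j$ has positive codimension in whichever component of $\overline{Y}$ contains it, so every connected component of $\overline{Y}$ meets $Y$ in a dense open subset; connectedness of $Y$ therefore forces $\overline{Y}$ to be connected, hence, being smooth, irreducible. Composing $s$ with the open immersion $Y_D\hookrightarrow \overline{Y}_D$ produces a section of $\overline{Y}\to Z$ over $D$, so Corollary \ref{coro:conn} shows that the geometric fibres of $\overline{Y}\to Z$ above $D$ are connected. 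The Stein factorization $\overline{Y}\to Z'\to Z$ of the smooth proper morphism $\overline{Y}\to Z$ has $Z'\to Z$ finite étale, and its (constant) degree over the connected base $Z$ equals the number of geometric connected components of any fibre of $\overline{Y}\to Z$; this degree is $1$ at a point of $D$ by the preceding sentence, so $Z'\to Z$ is an isomorphism and every geometric fibre of $\overline{Y}\to Z$ is connected. Since each $T_{j,z}$ is a proper closed subset of the smooth geometrically irreducible fibre $\overline{Y}_z$, the complement $Y_z=\overline{Y}_z\setminus \bigcup T_{j,z}$ remains geometrically irreducible, and in particular geometrically connected.

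Finally, $f_D:Y_D\to D$ is a geometric fibration whose fibres are geometrically connected and which carries the section $s$, so Theorem \ref{thm:stix}(b) supplies condition (EP) for $f_D$. Since $f$ is smooth (being a geometric fibration) and $D$ contains every codimension-$1$ point of $Z$, Lemma \ref{lemm:ht-loc} then propagates (EP) from $f_D$ to $f$, completing the proof.

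The principal obstacle is the middle step: one must establish geometric connectedness of \emph{every} fibre of $f$, not merely those over $D$, which forces one to pass to the smooth proper compactification supplied by the special fibration structure and to run a Stein-factorization argument there. Once this is in hand, the remaining pieces are formal consequences of Theorem \ref{thm:stix}(b) and Lemma \ref{lemm:ht-loc}.
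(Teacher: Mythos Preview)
Your proof is correct and follows essentially the same approach as the paper: extract a section over a big open $D$ from the near-integral point, pass to the smooth proper compactification $\overline{Y}$ to establish geometric connectedness of all fibres, apply Theorem~\ref{thm:stix}(b) to $f_D$, and then propagate (EP) to $f$ via Lemma~\ref{lemm:ht-loc}. Your explicit reduction to the special-fibration case is in fact more careful than the paper's write-up (which tacitly assumes a global $\overline{Y}$); on the other hand, your separate Stein-factorization paragraph is unnecessary, since once $\overline{Y}(R(Z))\neq\emptyset$ (which follows immediately from the section over $D$, as $R(D)=R(Z)$), Corollary~\ref{coro:conn} applied to $\overline{Y}\to Z$ already yields geometric connectedness of \emph{all} fibres in one stroke.
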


\begin{proof} There exists a smooth proper morphism $\of:\oY\to Z$ and 
and an open immersion $Y\to \oY$ over $Z$. Replacing $\oY$ by that connected component of $\oY$ that contains $Y$ we can assume that $\oY$ is connected. Thus $\oY$ is a smooth connected $K$-variety. Furthermore $\oY(R(Z))\neq \emptyset$. By Corollary \ref{coro:SGA1ht} all fibres of $\of$ are geometrically connected. Moreover the fibres of $\of$ are smooth. It follows that the fibres of $\of$ are geometrically integral. The morphism $f$ is a geometric fibration, hence surjective. Thus, for every $z\in Z$, the fibre $Y_z$ is a non-empty open subscheme of the geometrically integral $k(z)$-variety $\oY_z$. Hence all fibres of $f$ are geometrically connected. 
There exists a non-empty open subscheme $D$ of $Z$ such that $D$ contains all codimension $1$ points of $Z$ and such that the geometric fibration
$f_D: Y_D\to D$ has a section (cf. Proposition \ref{lemm:vojjav}). It follows by Theorem \ref{thm:stix} that $f_D$ satisfies condition (EP). 
Lemma \ref{lemm:ht-loc} implies that $f$ satisfies condition (EP). 
\end{proof}

\begin{coro} \label{coro:pi2} Let $K$ be a field of characteristic zero. Let $Y$ and $Z$ be connected smooth $K$-varieties. 
Let $f: Y\to Z$ be a geometric fibration. 
Assume that $Z$ has HP over $K$ and that the subset $Y(Z)^{(1)}$ of $Y(R(Z))$ is not strongly thin in $Y_{R(Z)}$. Then 
$Y$ has WHP over $K$. 
\end{coro}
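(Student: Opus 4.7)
The plan is to reduce Corollary \ref{coro:pi2} directly to Theorem \ref{thm:main:abstract} by verifying its two non-obvious hypotheses, namely smoothness of $f$ and property (EP). The smoothness comes for free: by the very definition of a geometric fibration (Definition \ref{defi:fib}, together with the remark immediately following it), every geometric fibration is smooth and surjective, so $f: Y\to Z$ is a smooth $K$-morphism between smooth connected $K$-varieties.

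To obtain property (EP), I would invoke Corollary \ref{coro:stix}. First I would observe that the hypothesis that $Y(Z)^{(1)}\subset Y(R(Z))$ is not strongly thin in $Y_{R(Z)}$ forces $Y(Z)^{(1)}\neq \emptyset$, since the empty subset of $Y_{R(Z)}(R(Z))$ is strongly thin (take any proper closed subset of $Y_{R(Z)}$, for example the empty subscheme, together with an empty index set $J$, in the definition of strongly thin). Hence $Y(Z)^{(1)}$ is in particular non-empty, and Corollary \ref{coro:stix} applies verbatim to the geometric fibration $f: Y\to Z$, yielding that all fibres of $f$ are geometrically connected and that $f$ satisfies condition (EP).

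With smoothness of $f$ and condition (EP) in hand, together with the assumption that $Z$ has HP over $K$ and that $Y(Z)^{(1)}$ is not strongly thin in $Y_{R(Z)}$, Theorem \ref{thm:main:abstract} applies directly and gives the conclusion that $Y$ has WHP over $K$. There is no real obstacle here; the entire content of the corollary consists in packaging together the higher-\'etale-homotopy input from Theorem \ref{thm:stix} (accessed via Corollary \ref{coro:stix}) with the abstract fibration criterion of Theorem \ref{thm:main:abstract}. If anything were to need care, it would be the verification that non-strong-thinness of $Y(Z)^{(1)}$ implies non-emptiness, but this is immediate from the definition of strongly thin sets.
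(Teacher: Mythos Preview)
Your proof is correct and follows essentially the same route as the paper's own proof, which simply cites Corollary \ref{coro:stix} to obtain condition (EP) and then applies Theorem \ref{thm:main:abstract}. You have merely spelled out the implicit steps (smoothness of $f$ from the definition of geometric fibration, and non-emptiness of $Y(Z)^{(1)}$ from non-strong-thinness) that the paper leaves tacit.
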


\begin{proof}
Corollary \ref{coro:stix} implies that $f$ satisfies condition (EP). Thus the assertion follows by Theorem \ref{thm:main:abstract}. 
\end{proof}

\begin{coro} \label{coro:iwhp-setb} Consider the situation of Setup \ref{setup2}. Assume that $Y$ and $Z$ are smooth over $K$ and that 
$f$ is a geometric fibration. Assume that all fibres of $f$ are geometrically connected. Assume that $f$ has a section or that $Z$ is a $K(\pi,1)$-scheme. 
If $\Xi$ is not strongly thin in $Z$, then $\Sigma$ is not strongly thin in $Y$. In particular, if $\Xi_0=\{z\in \ZZ(S)^{(1)}: \mbox{$\YY_z$
has WHP integrally over $S$}\}$ is not strongly thin in $Z$, then $\YY$ has WHP integrally over $S$. 
\end{coro}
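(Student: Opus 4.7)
The plan is to reduce the corollary directly to Theorem \ref{thm:iwhp} by first verifying that, under the weaker hypotheses stated here, the morphism $f$ still satisfies condition (EP). Observe that in Setup \ref{setup2} we already assume $f$ is normal with geometrically connected fibres and that $Y,Z$ are connected normal; the only extra ingredient needed from the hypotheses of Theorem \ref{thm:iwhp} is condition (EP), which here is not assumed at the outset.

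To obtain condition (EP) I would invoke Theorem \ref{thm:stix}. Since $f$ is a geometric fibration with geometrically connected fibres, and since either $f$ has a section (case (b) of that theorem) or $Z$ is a $K(\pi,1)$-scheme (case (a)), the conclusion of Theorem \ref{thm:stix} applies and gives that $f$ satisfies condition (EP). The smoothness of $Y$ and $Z$ plays no further role at this point; it was only used so that geometric fibrations make sense in the form needed. With (EP) in hand, Theorem \ref{thm:iwhp} applies verbatim and yields the first assertion: if $\Xi$ is not strongly thin in $Z$, then $\Sigma$ is not strongly thin in $Y$.

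For the ``in particular'' part, I would specialise $\Sigma$ to $\YY(S)^{(1)}$. The key book-keeping identity, already recorded just after Setup \ref{setup2}, is $\YY_z(S)^{(1)} = Y_z(K) \cap \YY(S)^{(1)}$ for every $z \in \ZZ(S)^{(1)}$. Thus if $z \in \Xi_0$, so that $\YY_z$ has WHP integrally over $S$, then by definition $\YY_z(S)^{(1)}$ is not strongly thin in $Y_z$, i.e.\ $\Sigma \cap Y_z(K)$ is not strongly thin in $Y_z$, meaning $z \in \Xi$. This shows $\Xi_0 \subset \Xi$, so if $\Xi_0$ is not strongly thin in $Z$, neither is $\Xi$. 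Applying the first part with $\Sigma = \YY(S)^{(1)}$ then gives that $\YY(S)^{(1)}$ is not strongly thin in $Y$, which is exactly the statement that $\YY$ has WHP integrally over $S$.

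The substantive content—establishing condition (EP) from a geometric fibration with section or $K(\pi,1)$ base—has already been handled in Section \ref{sec:digression} via the Artin--Mazur--Friedlander long exact sequence of homotopy pro-groups, so no real obstacle remains at this stage; the proof reduces to threading together Theorem \ref{thm:stix} and Theorem \ref{thm:iwhp}, together with the trivial inclusion $\Xi_0 \subset \Xi$. The main delicate point was the input from higher \'etale homotopy theory in Theorem \ref{thm:stix}, which is what allows one to drop the properness hypothesis that was used earlier via Fact \ref{fact:SGA1ht}.
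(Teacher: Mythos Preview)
Your proposal is correct and follows exactly the paper's approach: invoke Theorem \ref{thm:stix} to obtain condition (EP), then apply Theorem \ref{thm:iwhp}. Your additional justification of the ``in particular'' clause via the inclusion $\Xi_0\subset\Xi$ (using $\YY_z(S)^{(1)}=Y_z(K)\cap\YY(S)^{(1)}$) merely spells out what the paper leaves implicit.
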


\begin{proof} By Theorem \ref{thm:stix} $f$ satisfies condition (EP). Thus the assertions follow by Theorem \ref{thm:iwhp}. 
\end{proof}

\section{Discussion of an open problem}\label{sec:open}

The following conjecture is in my opinion the most innocent looking special case of Conjecture \ref{conj:ell2} that is unsolved at present, and I want to explain briefly where the problem lies. 

\begin{conj} Let $K$ be a number field and $Z\subset \Pp^1_K$ an open subscheme. Let $F=R(Z)$. Let $E_F$ be an elliptic curve over 
$F$ or rank $\dim_\Qq(E(F)\otimes_\Zz\Qq)\ge 1$ and with semistable reduction along $Z$. Let $f: E\to Z$ be the N\'eron model of $E_F$. Then 
$E$ has WHP over $K$. 
\end{conj}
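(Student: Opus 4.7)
The plan is to apply Theorem \ref{thm:main:abstract} directly to the N\'eron model $f : E \to Z$. Since $Z$ is a non-empty open subscheme of $\Pp^1_K$, it has HP over $K$ by Hilbert's irreducibility theorem. Since $Z$ is regular, the N\'eron mapping property identifies the near integral points: every $F$-rational point of $E_F$ extends uniquely to a section of $E$ over each local ring $\OOO_{Z,z}$ at a closed point $z$, so that $E(Z)^{(1)} = E(Z) = E_F(F)$. The assumption $\rk E_F(F) \ge 1$ makes $E_F(F)$ Zariski dense in $E_F$, and the main theorem of \cite{CDJLZ}, applied to the abelian variety $E_F/F$ over the finitely generated characteristic zero field $F$, then yields that $E_F$ has WHP over $F$; in particular $E(Z)^{(1)}$ is not strongly thin in the generic fibre $E_F$. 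The only missing input for Theorem \ref{thm:main:abstract} is that $f$ satisfies condition (EP).

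The natural route to (EP) goes through \'etale homotopy theory: $f$ is smooth and carries the zero section, so by Theorem \ref{thm:stix}(b) it would suffice to check that $f$ is a geometric fibration in the sense of Definition \ref{defi:fib}. This, however, is precisely where the argument breaks. Near a point $z_0 \in Z$ of multiplicative reduction, the identity component of the N\'eron fibre is $\Gg_m$, so a local special-fibration structure $E|_U = \oY \setminus \bigcup T_j$ would require a smooth proper $U$-curve $\oY$ whose generic fibre has genus $1$ and whose fibre at $z_0$ is a smooth proper curve. Since the arithmetic genus is locally constant in smooth proper families, the fibre at $z_0$ would also have to have genus $1$, but it would then have to contain $\Gg_m$ as the complement of a horizontal smooth divisor, which is absurd. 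Thus $f$ is not a geometric fibration in the sense of the paper and the clean route of Section \ref{sec:digression} is blocked.

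The remaining natural strategy is to pass to the minimal regular proper model $\oE \to Z$, whose total space is a smooth connected surface over $K$ with $E$ as its open subscheme of $Z$-smooth points, and whose bad fibres are N\'eron polygons. For $\oE \to Z$ condition (EP) does hold by Fact \ref{fact:SGA1ht} (via the zero section). One would then try to transfer injectivity of $\pi_1(E_\oxi) \to \pi_1(E)$ from the analogous statement for $\oE$ by comparing $\pi_1(E)$ with $\pi_1(\oE)$ across the bad fibres. The obstruction is that $\oE \setminus E$ consists of the nodes of the N\'eron polygons together with all but one of the components in each bad fibre; it is not a relative normal crossings divisor over $Z$, and neither purity of the branch locus nor the Artin--Mazur--Friedlander long exact sequence applies in a form that cleanly separates the semistable monodromy from the component-group data. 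Controlling that monodromy around a semistable degeneration of elliptic curves is the main obstacle I would expect to encounter, and I believe it is the crux of the remaining problem.
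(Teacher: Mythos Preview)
You correctly recognise that this statement is a conjecture and that the obstruction to applying Theorem~\ref{thm:main:abstract} is condition (EP); the paper does not prove the conjecture either, and devotes Section~\ref{sec:open} precisely to explaining why. Your reduction of the remaining hypotheses (HP of $Z$, $E(Z)^{(1)}=E_F(F)$ via the N\'eron mapping property, WHP of $E_F$ via \cite{CDJLZ}) is accurate.

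Where your analysis diverges from the paper is in the \emph{status} of (EP). You argue only that the known sufficient criteria (geometric fibration, Theorem~\ref{thm:stix}) fail, leaving open whether (EP) might nevertheless hold. The paper is sharper: Proposition~\ref{prop:htgr} proves that (EP) is \emph{false} for $f:E\to Z$ whenever the bad locus $S$ is non-empty. Thus the strategy via Theorem~\ref{thm:main:abstract} is not merely unverified but definitively blocked.

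Your discussion of the minimal regular model $\oE$ contains two inaccuracies that are worth correcting, because fixing them leads exactly to the paper's argument. First, $\oE\setminus E$ consists only of the nodes of the N\'eron polygons (the N\'eron model is the $Z$-smooth locus of $\oE$, not the identity component), hence has codimension~$2$ in the regular surface $\oE$; purity therefore \emph{does} apply and gives $\pi_1(E)\cong\pi_1(\oE)$. Second, Fact~\ref{fact:SGA1ht} does not apply to $\oE\to Z$ since that morphism is not smooth; what one has for the proper flat morphism $\oE\to Z$ is only right-exactness $\pi_1(\oE_{\oz})\to\pi_1(\oE)\to\pi_1(Z)\to 1$ at an arbitrary geometric point $\oz$. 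Combining this with the purity isomorphism and the assumed (EP) for $f$, one obtains a surjection $\pi_1(\oE_{\oz})\twoheadrightarrow\pi_1(E_{\oxi})\cong\widehat{\Zz}^2$. Choosing $\oz$ over a point of $S$, the fibre $\oE_{\oz}$ is a N\'eron $n$-gon with $\pi_1\cong\widehat{\Zz}$, yielding a surjection $\widehat{\Zz}\twoheadrightarrow\widehat{\Zz}^2$, a contradiction. This is the content of Proposition~\ref{prop:htgr}.
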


\begin{rema} Let $S$ be the set of all points $z\in Z$ such that $E$ has bad reduction at $z$ and $U=Z\setminus S$. Then $E_U\to U$ is an abelian scheme and Corollary \ref{coro:as} implies that $E_U$ has WHP over $K$. Nevertheless the conjecture stays open unless $S=\emptyset$. 
If $S\neq \emptyset$, then the above results cannot be applied to $f$ because $f$ then does not satisfy condition (EP)
by the following Proposition. 
\end{rema}

\begin{prop}\label{prop:htgr} If $f$ satisfies condition (EP), then $S=\emptyset$. 
\end{prop}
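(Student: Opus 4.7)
Proof sketch: I would argue by contradiction. Assume $z_0 \in S$; after possibly replacing $F = R(Z)$ by an unramified quadratic extension at $z_0$ (which preserves condition (EP) and the ramification behaviour at $z_0$), we may assume that $E_F$ has split multiplicative reduction at $z_0$, so that over the completion $F_{z_0}$ one has a Tate uniformization $E_{F_{z_0}} \cong \mathbb{G}_{m,F_{z_0}}/q^{\mathbb{Z}}$ with Tate parameter $q$ of valuation $m := v_{z_0}(q) \geq 1$. Fix a prime $n$ coprime to $m$.

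Applying the equivalent formulation (b) of condition (EP) stated after Definition 4.1 to the connected étale cover $[n]\colon E_{\oxi} \to E_{\oxi}$ of the geometric generic fibre furnishes a connected finite étale cover $\tilde E \to E$, a connected component $C$ of $\tilde E \times_E E_{\oxi}$, and an $E_{\oxi}$-morphism $\alpha\colon C \to E_{\oxi}$ satisfying $[n]\circ\alpha =$ structural map of $C$. Since $[n]\colon E_F \to E_F$ is $F$-defined, each $\mathrm{Gal}(\bar F/F)$-translate $\sigma(C)$ carries a Galois-conjugate morphism $\sigma(\alpha)$ satisfying the analogous factorisation property. Taking the union of the Galois orbit of $C$ and invoking Galois descent for morphisms of $F$-schemes, I obtain an open and closed $F$-subscheme $\tilde E'_F \subseteq \tilde E_F$ together with an $F$-morphism $\alpha'\colon \tilde E'_F \to E_F$ such that $[n]\circ\alpha'$ equals the structural map of $\tilde E'_F$. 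Letting $\tilde E'\subseteq \tilde E$ be the union of connected components whose generic fibres lie in $\tilde E'_F$ and pulling $\tilde E' \to E$ back along the zero section $0\colon Z \to E$ produces a finite étale morphism $0^*\tilde E' \to Z$ whose generic fibre is $\alpha'^{-1}(E_F[n])$ and which (via $\alpha'$) surjects onto the finite étale $F$-scheme $E_F[n]$.

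Since $0^*\tilde E' \to Z$ is étale, every residue field of a closed point of its generic fibre is unramified over $F$ at $z_0$, and this property is inherited by every $F$-subfield; hence the surjection onto $E_F[n]$ forces every residue field of a closed point of $E_F[n]$ to be unramified over $F$ at $z_0$. On the other hand, the Tate uniformization exhibits the geometric $n$-torsion as generated by $\zeta_n$ and $q^{1/n}$, and since $\gcd(n,m)=1$ the extension $F_{z_0}(q^{1/n})/F_{z_0}$ is totally ramified of degree $n$. Therefore the $n$-torsion point corresponding to $q^{1/n}$ (transferred from $E_{F_{z_0}}[n](\overline{F_{z_0}})$ to $E_F[n](\bar F)$ via the natural inclusion) defines a closed point of $E_F[n]$ with residue field ramified over $F$ at $z_0$, the desired contradiction.

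The main technical obstacle is the Galois descent step: one must verify carefully that the factorisation $[n]\circ\alpha=$ structural map, initially available only on the single component $C$, propagates via the Galois action on components to a well-defined $F$-morphism $\alpha'$ on the full Galois orbit. The remaining ingredients — the application of (EP), the pullback along the zero section, the inheritance of unramifiedness to $F$-subfields, and the explicit Tate curve ramification computation — are routine.
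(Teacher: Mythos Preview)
Your approach is quite different from the paper's, and the Galois descent step you flag as the ``main technical obstacle'' is indeed a genuine gap as written. If $\sigma\in\mathrm{Gal}(\bar F/F)$ stabilises the component $C$, then $\sigma(\alpha)$ and $\alpha$ are two $E_{\bar\xi}$-morphisms $C\to E_{\bar\xi}$ both lifting the structural map through $[n]$, hence they differ by some $t_\sigma\in E_{\bar\xi}[n]$. The assignment $\sigma\mapsto t_\sigma$ is a $1$-cocycle of $\mathrm{Stab}(C)$ with values in $E[n]$, and there is no reason for it to be a coboundary; so an $F$-morphism $\alpha'$ need not exist, and ``Galois descent for morphisms of $F$-schemes'' does not apply.

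The argument can, however, be repaired without descending $\alpha$. You only need that the inertia group $I_{z_0}$ acts trivially on $E_F[n](\bar F)$. Since $0^*\tilde E\to Z$ is finite \'etale, $I_{z_0}$ lies in the kernel of $\mathrm{Gal}(\bar F/F)\to\pi_1(Z)$ and hence fixes every point of $(0^*\tilde E)_{\bar\xi}$; in particular $I_{z_0}\subset\mathrm{Stab}(C)$ and $I_{z_0}$ fixes every point of $C_0:=C\cap(0^*\tilde E)_{\bar\xi}$. For $\rho\in I_{z_0}$ and $p\in C_0$ one then has $\rho\cdot\alpha_0(p)=\alpha_0(\rho\cdot p)+t_\rho=\alpha_0(p)+t_\rho$. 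Since $\alpha\colon C\to E_{\bar\xi}$ is a morphism of connected finite \'etale covers it is surjective, so there exists $p\in C_0$ with $\alpha_0(p)=0$; as $0\in E[n]$ is $F$-rational this forces $t_\rho=0$, and surjectivity of $\alpha_0$ then yields that $\rho$ acts trivially on all of $E_{\bar\xi}[n]$. The Tate-curve contradiction follows.

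For comparison, the paper's argument is considerably shorter and avoids torsion points entirely. It passes to the minimal regular proper model $\bar E\to Z$, uses purity of the branch locus to identify $\pi_1(E)\cong\pi_1(\bar E)$, and then exploits the right-exact sequence $\pi_1(\bar E_{\bar z})\to\pi_1(E)\to\pi_1(Z)\to 1$ (valid for every geometric point $\bar z$ of $Z$, since $\bar E\to Z$ is proper with geometrically connected reduced fibres). Combined with (EP) at $\bar\xi$, this produces an epimorphism $\pi_1(\bar E_{\bar z})\twoheadrightarrow\pi_1(E_{\bar\xi})\cong\widehat{\mathbb{Z}}^2$ for every $\bar z$; taking $\bar z$ over a point of $S$, where $\bar E_{\bar z}$ is a N\'eron polygon with $\pi_1\cong\widehat{\mathbb{Z}}$, gives an immediate contradiction. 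Your approach, once fixed, trades this topological comparison for explicit arithmetic of torsion, which is more hands-on but also more specific to the elliptic-curve situation.
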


\begin{proof} Let $\ol{E}/Z$ be the minimal regular proper model of $E_F$. The morphism $\of: \oE\to Z$ is flat and its fibres are (geometrically) reduced and geometrically connected. For every $z\in S$ the singular locus $\mathrm{Sing}(\oE_z)$ is a finite set because $\oE_z$ is reduced. 
The N\'eron model $E$ is obtained from $\oE$ by removing $\bigcup_{z\in S} \mathrm{Sing}(\oE_z)$ from $\oE$. Hence $E$ is an open subscheme of $\oE$ that contains all codimension $1$ points of $\oE$ (*). 

Let $\oz$ be a geometric point of $Z$ 
and  $\oa$ be a geometric point of $E_\oz$. 
There is an exact sequence
$\pi_1(\oE_\oz, \oa)\to \pi_1(\oE, \oa)\to \pi_1(Z, \oz)\to 1$. 
The homomorphism 
$\pi_1(E, \oa)\to \pi_1(\oE, \oa)$ is an isomorphism due to (*) by purity of the branch locus. It follows that
$$\pi_1(\oE_\oz, \oa)\to \pi_1(E, \oa)\to \pi_1(Z, \oz)\to 1$$
is exact. Let $\oxi$ be a geometric generic point of $Z$ and $\ob$ a geometric point of $E_\oxi$. Choose an \'etale path $c\in \pi_1(E, \ob, \oa)$ and let $c'\in \pi_1(Z, \oxi, \oz)$ be the \'etale path induced by $c$. We have a commutative diagram with exact rows
$$\xymatrix{ 1\ar[r] &\pi_1(E_\oxi, \ob)\ar[r]& \pi_1(E, \ob)\ar[r]\ar[d]^{c(-)c^{-1}} &\pi_1(Z, \oxi)\ar[r]\ar[d]^{c'(-)c'^{-1}} &1\\
& \pi_1(\oE_\oz, \oa)\ar[r]& \pi_1(E, \oa)\ar[r]& \pi_1(Z, \oz)\ar[r]& 1}$$
where the vertical maps are the isomorphisms. It follows that there is an epimorphism 
$$\pi_1(\oE_\oz, \oa)\to \ker( \pi_1(E_\oxi, \ob)\to \pi_1(Z,\oxi))\cong \pi_1(E_\oxi, b)$$
for every geometric point $\oz$ of $Z$ and every geometric point $\oa$ of $\oz$ (*). 
It is known that $\pi_1(E_\oxi, \ob)\cong\widehat{\Zz}^2$ ($E_\oxi$ is an elliptic curve over an algebraically closed field of characteristic zero). 
 We want to prove that $S\neq \emptyset$. Assume to the contrary that $S$ is not empty and choose in the above $\oz$ localized in $S$.
 Then the reduction in $\oz$ is bad semistable. Hence $\oE_\oz$ is a N\'eron $n$-gon and it is known that $\pi_1(\oE_\oz,\oa)=\widehat{\Zz}$, which leads by (*)  to an epimorphism $\hat{\Zz}\to \hat{\Zz}^2$. Contradiction. 
\end{proof}

{\sc Sebastian Petersen\\ 
Universit\"at Kassel\\
Fachbereich 10\\
Wilhelmsh\"oher Allee 71--73\\
34121 Kassel, Germany}\\
E-mail address: \texttt{petersen@mathematik.uni-kassel.de}

\end{document}